\tikzset{%
%% ordered sets
element/.style={draw, shape=circle, fill=white, inner sep=1.4pt}
}
\DeclareSymbolFont{bbold}{U}{bbold}{m}{n}
\DeclareSymbolFontAlphabet{\mathbbold}{bbold}
\theoremstyle{plain}
\newtheorem{thm}{Theorem}[section]
\newtheorem{lem}[thm]{Lemma}
\newtheorem{cor}[thm]{Corollary}
\newtheorem{pro}[thm]{Proposition}
\newtheorem{example}[thm]{Example}
\newtheorem{problem}[thm]{Problem}
\theoremstyle{definition}
\newtheorem{defn}[thm]{Definition}
\newtheorem{remark}[thm]{Remark}
\newcommand{\occ}{\operatorname{occ}}
\newcommand{\up}[1]{\textup{#1}}
\newcommand{\onto}{\twoheadrightarrow}
\newcommand{\ba}{\mathbf{a}}
\newcommand{\bu}{\mathbf{u}}
\newcommand{\bv}{\mathbf{v}}
\newcommand{\bw}{\mathbf{w}}
\begin{document}

\title[Ai-semirings]{Nonfinitely based ai-semirings with finitely based semigroup reducts}

\author{Marcel Jackson}
\address{Department of Mathematics and Statistics\\ La Trobe University\\ Victoria  3086\\
Australia} \email{M.G.Jackson@latrobe.edu.au}
\author{Miaomiao Ren} \address{School of Mathematics, Northwest University, Xi'an, 710127, Shaanxi, P.R. China}\email{miaomiaoren@yeah.net}
\author{Xianzhong Zhao}
\address{School of Mathematics, Northwest University, Xi'an, 710127, Shaanxi, P.R. China}\email{zhaoxz@nwu.edu.cn}

%\subjclass[2010]{Primary: 68Q17, 20M07; Secondary: 03C05, 08B99, 08C15}
\keywords{Semiring, finite basis problem, \texttt{NP}-complete, variety membership}

\thanks{Miaomiao Ren is supported by National Natural Science Foundation of China (11701449). Xianzhong Zhao is supported by National Natural Science Foundation of China (11971383, 11571278).}
\begin{abstract}
We present some general results implying nonfinite axiomatisability of many additively idempotent semirings with finitely based semigroup reducts.  The smallest is a $3$-element commutative example, which we show also has \texttt{NP}-hard membership for its variety.  As well as being the only nonfinite axiomatisable ai-semiring on $3$-elements, we are able to show that its nonfinite basis property infects many related semirings, including the natural ai-semiring structure on the semigroup $B_2^1$.  We also extend previous group-theory based examples significantly, by showing that any finite additively idempotent semiring with a nonabelian nilpotent subgroup is not finitely axiomatisable for its identities.
\end{abstract}

\maketitle
\section{Introduction}
A \emph{semiring} is an algebra with two associative binary operations $+,\cdot$, in which $+$ is commutative and $\cdot$ distributes over $+$ from the left and right.  Sometimes an additive and multiplicative $0$ element is required, though in the theme of many earlier investigations in the theory of semiring varieties, we do not ask this here.  The special case of \emph{additively idempotent semirings} (henceforth, \emph{ai-semirings}), where $+$ is idempotent, has received particular  attention in the study of varieties and equational properties; these are also often called \emph{semilattice ordered semigroups}.  Several of the most famous semirings are ai-semirings: the Kleene semiring of regular languages (see Conway \cite{con} for example), the max-plus and min-plus semirings of tropical analysis (see Aceto, \'Esik and Ing\'olfsd\'ottir~\cite{AEI}, for example),  the powerset semirings of semigroups (see Dolinka~\cite{dol1} for example), and semirings of binary relations under composition and union (Andr\'eka and Mikul\'as \cite{andmik} for example).  The $+$ operation is interpreted as union in all these cases except for the max-plus and min-plus algebras where it is max and min, respectively.   There is a further combinatorial appeal to ai-semirings, as distributivity of $\cdot$ over $+$ and the commutative and idempotent properties of $+$ easily reveal that ai-semiring terms correspond in natural ways to sets of (multiplicative) semigroup terms.  This already suggests a strong affinity between semigroup varieties and semiring varieties, and so it is perhaps not surprising that the vast majority of investigations relating to the general theory of ai-semiring varieties have built on the very heavily developed theory of semigroup varieties.  We have in mind in particular, results concerning the nonfinite axiomatisability properties of ai-semirings.  

An algebra $A$, or variety $V$ is said to be \emph{finitely based} (FB) if the set of its valid equations is derivable from some finite subset; otherwise it is said to be \emph{nonfinitely based} (NFB).  A series of papers by Igor Dolinka  \cite{dol0,dol1,dol2,dol3} showed that early ideas of Peter Perkins~\cite{per} for finite semigroups and the very deep contributions of Mark Sapir \cite{sap1,sap2} for inherently nonfinitely based semigroups (precise definition later) could be translated, with some care, to some significant and natural examples of ai-semirings.  An exception to this theme are the known results of flat extensions of finite groups, which are FB always as semigroups (they are simply Clifford semigroups), yet are FB as semirings only when the underlying group omits any nonabelian nilpotent subgroups, Jackson \cite[Theorem 7.3]{jac:flat}.  In this instance it is instead the quasi-equational theory of the underlying multiplicative semigroup that translates to the equational theory of the semiring.  All of the other example references \cite{AEI,andmik,con} in the opening paragraph of the article also consider NFB  issues for these classically arising ai-semirings, though the focus in the present article will be on finite semirings.

In the meantime, a quite extensive investigation into syntactically defined classes of ai-semirings has been performed, predominantly relating to establishing finite axiomatisability and describing variety lattices.  Early work by McKenzie and Romanowska \cite{mckrom} showed that when multiplication (as well as addition) is idempotent and commutative, there is always a finite equational basis.  The broader class of semilattice-ordered bands is considered in \cite{GPZ,pas,paszha}, where it is ultimately shown that there are precisely 78 varieties of multiplicatively idempotent ai-semirings, all having the FB property.  Additively idempotent semirings satisfying $x^n\approx x$ were studied by Ku\v{r}il and Pol\'ak \cite{kurpol}, and there have been significant advances, particularly in the commutative case. The second and third authors \cite{renzha} showed that there are 9 distinct varieties of ai-semirings satisfying $x^3\approx x, xy\approx yx$, and more recently with Shao \cite{RZS} have  extended this to higher periods, showing that when $n-1$ is square free, the lattice of subvarieties of the variety of ai-semirings defined by $x^n\approx x, xy\approx yx$ has $2+2^{r+1}+3^r$ elements, where $r$ denotes the number of prime divisors of $n-1$.  Dropping the commutative condition, the second and third authors with Wang \cite{RZW} identify exactly 179 ai-semiring varieties satisfying $x^3\approx x$.   Very recently, the second and third authors with Crvenkovi\'c, Shao and Dapi\'c \cite{ZRCSD} have provided a detailed investigation of ai-semirings of order $3$, revealing that of the 61 possibilities (as well as 6 of order $2$), all but perhaps one has a finite basis for its identities.  The remaining unresolved semiring is denoted by $S_7$ in the classification there and has the following table, and is the starting point for the contributions of the present article.
\begin{center}
\begin{tabular}{c|ccc}
$+$&1&$a$&0\\
\hline
$1$&1&$0$&0\\
$a$&0&$a$&0\\
$0$&0&$0$&0\\
\end{tabular}\qquad
\begin{tabular}{c|ccc}
$\cdot$&1&$a$&0\\
\hline
$1$&1&$a$&0\\
$a$&$a$&$0$&0\\
$0$&0&$0$&0\\
\end{tabular}
\end{center}
The multiplicative semigroup reduct of $S_7$ is a seemingly innocuous commutative monoid, with identity basis $x^2\approx x^3,xy\approx yx$, so that it is not initially an obvious candidate for challenging equational behaviour as a semiring.  Despite this we find that not only does  $S_7$ have the NFB property, it holds this property rather infectiously.  We derive a quite general condition that implies the NFB property provided that $S_7$ is contained in the variety.  This is held by many examples that, like $S_7$, are FB as multiplicative semigroups, as well as others.  One consequence of particular interest is the natural semiring structure on the combinatorial Brandt monoid $B_2^1$, which we show is NFB.  Multiplicatively, $B_2^1$ can be given by the following matrices under matrix multiplication:
\[
\begin{tabular}{cccccc}
$\left(\begin{matrix} 0&0\\0&0\end{matrix}\right)$
&
$\left(\begin{matrix} 1&0\\0&1\end{matrix}\right)$
&
$\left(\begin{matrix} 0&1\\0&0\end{matrix}\right)$
&
$\left(\begin{matrix} 0&0\\1&0\end{matrix}\right)$
&
$\left(\begin{matrix} 1&0\\0&0\end{matrix}\right)$
&
$\left(\begin{matrix} 0&0\\0&1\end{matrix}\right)$
\\
\rule{0cm}{.5cm}$0$&$1$&$a$&$b$&$ab$&$ba$
\end{tabular}
\]
There is a unique semilattice order on $B_2^1$ that makes it a semiring, and it is precisely the order that arises from the usual natural order as an inverse semigroup, albeit using $\geq$ rather than $\leq$; see Figure \ref{fig:B21}.
\begin{figure}
\begin{tikzpicture}
\node [element,fill=black] (0) at (2.5,2) [label=$0$]  {};
\node [element,fill=black] (ab) at (2,1) [label=below left:$ab$]  {};
\node [element,fill=black] (ba) at (3,1) [label=below right:$ba$]  {};
\node [element,fill=black] (1) at (2.5,0) [label=right:$1$]  {};
\node [element] (a) at (1,1) [label=left:$a$]  {};
\node [element] (b) at (4,1) [label=right:$b$]  {};
\draw (0) -- (ab) -- (1) -- (ba)--(0)--(a);
\draw (0) -- (b);
\end{tikzpicture}
\caption{The semigroup $B_2^1$ becomes a semiring if $+$ is the  join operation given by this order.}\label{fig:B21}
\end{figure}
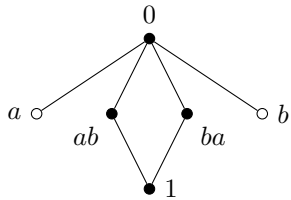
Ironically, our arguments for $S_7$ has its roots in combinatorial developments made specifically for $B_2^1$ as a semigroup in \cite{jac:SAT}, but our arguments for $B_2^1$ as a semiring depend on the $S_7$ adaptation, rather than the original semigroup-theoretic methods presented in \cite{jac:SAT}.  A further consequence of our method is that membership problem for finite algebras in the variety of $S_7$ is \texttt{NP}-complete; this is arguably the simplest example of a finite algebra known with non-polynomial time variety membership (assuming $\texttt{P}\neq \texttt{NP}$).  The result again is somewhat infectious, and extends to the semiring $B_2^1$, again paralleling a result in~\cite{jac:SAT} for $B_2^1$ as a semigroup, though again via $S_7$.
\begin{remark}
The NFB property for the natural semiring structure of~$B_2^1$ has also been independently and contemporaneously established by Mikhail Volkov, using an entirely unrelated approach to that given here: this time relating to the nonfinite axiomatisability of $B_2^1$ as an inverse semigroup \cite{vol21}.
\end{remark}
We then return to the group-theoretic approach of \cite{jac:flat},  which again does not require  the underlying multiplicative semigroup be NFB. The method of \cite{jac:flat} applies only to flat extensions of finite groups, which are exactly the subdirectly irreducible semirings arising from (finite) Clifford semigroups whose natural order as an inverse semigroup produces a semilattice order \cite[\S7]{jacsto:PM}.  We show that, much more generally, any finite ai-semiring containing a nonabelian nilpotent subgroup in its multiplicative reduct is without a finite identity basis.

The structure of the article is as follows.  We begin in Section \ref{sec:SW} with the definition and development of some basic semirings that will be the basis of the main results; aside from $S_7$, we make particular use of semigroups built from finite sets $W$ of commutative words, denoted $S_c(W)$; there is also a monoid version, which we denote by $M_c(W)$.
In Section \ref{sec:hypergraphs} we present the key \emph{hypergraph semiring} construction that we require for our proofs.  This construction is similar to, though not identical to, the construction $S_c(W)$, where $W$ is a set of words corresponding to hyperedges.  Our first main results are contained in Section~\ref{sec:hypersemiringvariety}, where we find that membership of the hypergraph semirings in varieties of our basis semirings is tied to their colourability properties.
Theorem~\ref{thm:hard} shows that $S_7$, $S_c(abb)$ and $B_2^1$ (amongst others) all have \texttt{NP}-hard membership problem for their variety; the case of $S_7$ is the smallest possible size for such an example, and is only the second known example on three elements.
The property of \texttt{NP}-hard membership implies the nonfinite basis property under the assumption of $\texttt{P}\neq \texttt{NP}$, however we provide a more direct, and more general, nonfinite  basis result in Theorem~\ref{thm:p3}.  This theorem demonstrates the FB property for a very broad range of finite ai-semirings, including all but a small class of the form $S_c(W)$, and all of the form $M_c(W)$, for nonempty $W$.  Another example consequence is that a flat semiring with identity is FB if and only if it is the flat extension of a finite group whose nilpotent subgroups are abelian (Corollary~\ref{cor:monoid}).

The specific case of $S_7$  completes the classification of the finite basis property for ai-semirings on at most three elements: 60 of the 61 ai-semirings on three elements are known to be finitely based~\cite{ZRCSD}, while $S_7$ is NFB (see Corollary~\ref{cor:3element}).  Given the prominence of $S_7$ in some of these results, in Section~\ref{sec:S7} we provide some finer analysis of its equational properties, in particular giving a combinatorial description of the equational theory (which has co-\texttt{NP}-complete membership).  In Section~\ref{sec:group} we return to the classification of the FB property for flat groups given by the first author in~\cite{jac:flat}.
Our Corollary~\ref{cor:monoid} already extends this to cover all flat monoid semirings, however in Section~\ref{sec:group} we provide a different extension, by demonstrating that the requirement of being a flat group can be dropped: any finite ai-semiring is NFB if its multiplicative reduct contains a nonabelian Sylow subgroup (Theorem~\ref{thm:pgroup}).
We later demonstrate this result continues to hold in the signature of ai-semirings with~$0$ (Theorem~\ref{thm:pgroup0}).

We conclude the article in Section~\ref{sec:problems} with an extensive list of problems which we feel will provide useful directions to this area.

\section{Semirings}\label{sec:SW}
The main technical construction of hypergraph semirings will given in Section~\ref{sec:hypergraphs}, but in this section we give some further important examples and constructions.
The constructions in this section will all be \emph{flat semirings}: the order is of height one, with the top element equal to a multiplicatively and additively absorbing zero element.  Thus the addition is idempotent and has $x+y:=0$ whenever $x\neq y$.  Notationally, we use $0$ for this absorbing element by default, as it is a multiplicative $0$.  As the top element in the $+$-order however, the notation $\infty$ is a further sensible notation, and we will use this in the rare cases where we wish to discuss semirings with $0$ (where $0$ plays the role of an additive identity and a multiplicative zero).  We also briefly consider semirings with identity, where the constant $1$ will be used to denote a distinguished multiplicative identity element.
Flat algebras have been frequently considered in the literature, including \cite{jac:flat,kunver,sze,wil} for example.  As we prove, the following result follows quickly from \cite[Theorem 3.1(3)]{jac:flat}.
\begin{lem}\label{lem:flatvariety}
The flat semirings generate a variety whose subdirectly irreducible members are precisely the flat semirings.  Within the variety of ai-semirings, this variety can be defined by the equations
\[
x_1ux_2+y_1uy_2+y_1vy_2\approx x_1vx_2+y_1uy_2+y_1vy_2\tag{$*$}\label{eq:flat}
\]
where any of $x_1,x_2,y_1,y_2$ may be empty.
\end{lem}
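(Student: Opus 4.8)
The plan is to deduce both assertions from \cite[Theorem 3.1(3)]{jac:flat}, which treats flat extensions over an arbitrary algebraic signature $\sigma$: it shows that the flat $\sigma$-algebras generate a variety, identifies its subdirectly irreducible members, and axiomatises it, relative to the laws declaring $+$ a flat semilattice with multiplicatively absorbing top, by a single schema of identities expressing that the $+$-sum of two ``generically distinct'' $\sigma$-terms is multiplicatively absorbing. I would apply this with $\sigma$ the (one binary operation) signature of semigroups, so that $\sigma$-terms are words. Up to the evident identification, a flat semiring is such a flat algebra: deleting the top from a flat semiring leaves a partial semigroup whose translations are injective wherever defined, and re-adjoining the top as a fresh absorbing zero recovers the semiring, with distributivity over $+$ then automatic; so a flat $\sigma$-algebra in this signature is a semiring precisely when it is a flat semiring. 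In this signature the abstract schema becomes exactly the displayed family $(*)$, the several clauses arising by allowing the context words $x_1,x_2,y_1,y_2$ to be empty; one first reduces the general case of several altered occurrences of a variable to that of a single altered occurrence, by pushing the substitution through a word one letter at a time.

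I would first check the easy inclusion directly, which also confirms that $(*)$ is the correct specialisation of the schema. Let $A$ be a flat semiring with absorbing top $0$, fix an assignment of the variables to elements of $A$, and write $x_1ux_2$, etc., also for the resulting values (empty factors being omitted). If $u$ and $v$ receive equal values then $x_1ux_2=x_1vx_2$ and $y_1uy_2=y_1vy_2$, so by idempotency each side of $(*)$ equals $x_1ux_2+y_1uy_2$. If $u$ and $v$ receive distinct values then $y_1uy_2+y_1vy_2=0$: either $y_1uy_2\neq y_1vy_2$, and then the sum is $0$ because $+$ is flat; or $y_1uy_2=y_1vy_2$, and then, using that $0$ is multiplicatively absorbing, that $u+v=0$ (again by flatness of $+$), and distributivity, $0=y_1\cdot 0\cdot y_2=y_1(u+v)y_2=y_1uy_2+y_1vy_2$. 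Either way $0$ occurs as a summand on both sides of $(*)$, and since $0$ is additively absorbing both sides equal $0$. Hence every flat semiring satisfies $(*)$, and so does the variety it generates.

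For the reverse inclusion, let $A$ be a subdirectly irreducible ai-semiring satisfying $(*)$. By \cite[Theorem 3.1(3)]{jac:flat} applied in the signature $\{\cdot\}$, the identities $(*)$ together with the semilattice laws for $+$ and associativity of $\cdot$ axiomatise the variety generated by the flat $\sigma$-algebras, so $A$ lies in that variety; being subdirectly irreducible, $A$ is then itself such a flat algebra, and in particular its additive reduct is a flat semilattice whose top is a multiplicative zero. As $A$ is also an ai-semiring it is therefore a flat semiring. Birkhoff's subdirect representation theorem now exhibits every ai-semiring satisfying $(*)$ as a subdirect product of flat semirings, which gives the remaining inclusion; the same reasoning identifies the subdirectly irreducible members of the variety with the flat semirings.

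The step requiring the most care is the bridge between the abstract flat-extension apparatus of \cite{jac:flat} and the present setting: a flat semiring is not literally a flat algebra over its own multiplicative reduct, since that reduct already contains the top, so one must represent it instead as the flat extension of the partial semigroup obtained by deleting the top, verify that the hypotheses and the resulting equational schema of \cite[Theorem 3.1(3)]{jac:flat} are unaffected by this passage, and confirm that the schema is exactly the displayed family $(*)$. The two combinatorial reductions noted above --- from several altered occurrences of a variable to a single one, and the incorporation of the empty-context clauses --- are the routine but somewhat fiddly details of that verification.
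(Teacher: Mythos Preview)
Your proposal is correct and follows essentially the same approach as the paper: both deduce the result from \cite[Theorem~3.1(3)]{jac:flat} and then argue that the general schema produced there (the paper's $(*')$) is interderivable with the displayed family $(*)$, via distributivity and reducing multiple altered occurrences to a single one. Your direct verification that flat semirings satisfy $(*)$ is redundant given the citation but harmless, and your explicit attention to the ``bridge'' between flat semirings and flat extensions of partial semigroups is a detail the paper simply absorbs into the phrase ``immediate consequence''.
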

\begin{proof}
It is immediate consequence of Theorem 3.1(3) of \cite{jac:flat} that the variety generated by flat semirings can be axiomatised by the ai-semiring axioms along with
\[
t(u+v,x_1,\dots,x_n)+s(u,x_1,\dots,x_m)\approx t(u+v,x_1,\dots,x_n)+s(v,x_1,\dots,x_m)\tag{$*'$}\label{eq:flat2}
\]
where $s(x_0,\dots,x_n)$ and $t(x_0,\dots,x_m)$ are any terms, where $x_0$ appears explicitly in $t$.  Up to change of letter names and applications of distributivity of $\cdot$ over $+$, Equation \eqref{eq:flat} in the lemma statement is an example of this, where $s(x_0,x_1,x_2)$ is $x_1x_0x_2$ and $t(y_0,y_1,y_2)$ is $y_1y_0y_2$ (and where we allow $x_1,x_2,y_1,y_2$ to be omitted; Equation \eqref{eq:flat} is really a family of equations).  However any equation of the form of Equation \eqref{eq:flat2} easily follows by repeated applications of Equation \eqref{eq:flat} once distributivity of $\cdot$ over $+$ as been used to rewrite $s$ and $t$ as a sum of multiplicative terms.
\end{proof}
The following lemma explains when  a semigroup with $0$ becomes a flat semiring under the flat semiring addition.
\begin{lem}\label{lem:cancellative}
A semigroup with $0$ becomes a flat semiring if and only if it satisfies the \emph{0-cancellative laws} $xy\approx xz\not\approx 0\rightarrow y\approx z$ and $xy\approx zy\not\approx 0\rightarrow x\approx z$.
\end{lem}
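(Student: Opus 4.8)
The plan is to check the semiring axioms directly for the structure $(S,+,\cdot)$, where $(S,\cdot)$ is the given semigroup with zero and $+$ is the \emph{flat semiring addition} prescribed in this section: $x+x=x$ for all $x$, and $x+y=0$ whenever $x\neq y$. The operation $+$ is the familiar ``flat semilattice'' on $S$ (add a new top to the antichain $S\setminus\{0\}$), so it is idempotent, commutative and associative without any hypothesis on $\cdot$; its top element is $0$, since $x+0=0$ for every $x$, and $0$ is multiplicatively absorbing by assumption. Hence the resulting algebra is automatically a flat semiring \emph{once} it is a semiring at all, and the only axioms whose validity is in question are the two distributive laws $x(y+z)\approx xy+xz$ and $(y+z)x\approx yx+zx$. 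I would verify the claim for left distributivity and invoke left--right symmetry for the other.

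For left distributivity, fix $x,y,z\in S$ and split on whether $y=z$. If $y=z$, then both sides equal $xy$, so the identity holds unconditionally; this case is precisely what makes the idempotent choice of $+$ forced, and it uses nothing about $S$. If $y\neq z$, then $y+z=0$, so the left-hand side is $x0=0$; meanwhile the right-hand side $xy+xz$ equals $0$ if $xy\neq xz$ and equals $xy$ if $xy=xz$. Therefore left distributivity holds for all triples if and only if: for all $x$ and all $y\neq z$, $xy=xz$ forces $xy=0$. Contrapositively, this says $xy=xz\neq 0$ implies $y=z$, which is exactly the left $0$-cancellative law $xy\approx xz\not\approx 0\rightarrow y\approx z$. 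The analogous analysis of $(y+z)x\approx yx+zx$ yields the right $0$-cancellative law $xy\approx zy\not\approx 0\rightarrow x\approx z$, and conjoining the two equivalences gives the lemma in both directions at once.

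There is no serious obstacle here; the argument is a short case analysis. The only points worth stating carefully are that the $y=z$ case of each distributive law is automatic (and fixes the shape of $+$), while the $y\neq z$ case is where the ``$\not\approx 0$'' side condition of the cancellative laws enters, and that the remaining semiring axioms (associativity, commutativity, idempotence of $+$, associativity of $\cdot$, and absorption by $0$) are immediate from the construction of $+$ and the hypothesis that $(S,\cdot)$ is a semigroup with zero.
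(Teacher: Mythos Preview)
Your proof is correct and follows essentially the same approach as the paper: reduce to checking distributivity, split on $y=z$ versus $y\neq z$, and observe that the $y\neq z$ case is equivalent to the contrapositive of the $0$-cancellative law. The only cosmetic difference is that the paper first notes the inequality $xy+xz\leq x(y+z)$ (which holds automatically since $0$ is the top of the flat order) and then asks when equality holds, whereas you compute both sides directly; the case analysis and conclusion are identical.
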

\begin{proof}
We need only check the distributive laws, as $\cdot$ and $+$ are associative, and $+$ is a semilattice operation.  By symmetry we consider distributivity from the left only.  We have $xy+xz\leq x(y+z)$ always, so need to show that $x(y+z)\leq xy+xz$.  If $y=z$ there is nothing to prove, so assume $y\neq z$; then $x(y+z)=0$.   Thus left distributivity is equivalent to the condition that $xy+xz=0$ whenever $y\neq z$. In the flat setting, this is equivalent to either $xy=0$ or $xz=0$ or $xy\neq xz$, which is precisely the contrapositive of the first 0-cancellative law.
\end{proof}
The $0$-direct join of two semigroups $S,T$ with zero element $0$ is the semigroup on the disjoint union $S\backslash\{0\}\mathbin{\dot\cup}T\backslash\{0\}\mathbin{\dot\cup}\{0\}$ where all products within $S$ and $T$ are as before, but products between elements of $S$ with elements of $T$ (in either order) are $0$.  It is clear that if $S$ and $T$ are $0$-cancellative, then their $0$-direct join is $0$-cancellative, so that the following  lemma is a corollary of Lemma \ref{lem:cancellative}
\begin{lem}\label{lem:0directjoin}
The $0$-direct join of two flat semirings is a flat semiring.
\end{lem}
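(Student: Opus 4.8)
The plan is to apply Lemma~\ref{lem:cancellative}. Since the $0$-direct join of two semigroups with zero is again a semigroup with zero, it suffices to show that if $S$ and $T$ each satisfy the two $0$-cancellative laws, then so does their $0$-direct join $U=(S\setminus\{0\})\mathbin{\dot\cup}(T\setminus\{0\})\mathbin{\dot\cup}\{0\}$. (One could instead verify the distributive laws for $U$ directly, but routing through Lemma~\ref{lem:cancellative} is cleaner, since the work of reducing distributivity to $0$-cancellativity has already been done there.)

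The key observation is that in $U$ a product $xy$ is nonzero only when $x$ and $y$ lie in the same summand --- either both in $S\setminus\{0\}$ or both in $T\setminus\{0\}$ --- and their product computed within that summand is already nonzero; all mixed products, and any product involving $0$, equal $0$ by definition of the $0$-direct join. Granting this, suppose $xy=xz\neq 0$ in $U$. From $xy\neq 0$ we conclude that $x$ and $y$ lie in a common summand, say $S\setminus\{0\}$ (the case of $T$ is identical); from $xz\neq 0$, together with $x\in S\setminus\{0\}$, we likewise get $z\in S\setminus\{0\}$. Then $xy=xz\neq 0$ is an equation holding in $S$, so the first $0$-cancellative law for $S$ yields $y=z$. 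The second $0$-cancellative law is handled by the symmetric argument, invoking the second law in whichever of $S$ or $T$ contains the relevant elements. Hence $U$ is $0$-cancellative, and Lemma~\ref{lem:cancellative} gives that $U$ is a flat semiring.

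There is no real obstacle here: the only point needing (minor) care is that the zero of $U$ coincides with the zeros of $S$ and of $T$, so that for an element of $S\setminus\{0\}$ being ``nonzero in $U$'' is the same as being ``nonzero in $S$'' --- which is immediate from the construction. Thus the lemma is, as asserted, a straightforward corollary of Lemma~\ref{lem:cancellative}.
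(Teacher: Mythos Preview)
Your proposal is correct and follows exactly the approach the paper takes: the paper simply remarks that ``if $S$ and $T$ are $0$-cancellative, then their $0$-direct join is $0$-cancellative,'' and declares the lemma a corollary of Lemma~\ref{lem:cancellative}. You have supplied the routine details the paper omits, and they are all in order.
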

Every group $G$ is cancellative, and becomes a semiring $\flat(G)$ (isomorphic to $G^0$ as a multiplicative semigroup) satisfying the condition of Lemma \ref{lem:cancellative} by adjoining a multiplicative zero element that forms the top element of the $+$-semilattice.  These \emph{flat groups} have previously been studied by the first author in \cite[\S7.7,7.8]{jac:flat} and \cite{jac:eqncomp}.
\begin{pro}\label{pro:monoid}
A finite flat semiring $S$ with multiplicative identity $1$ either contains $S_7$ as a subsemiring, or is the flat extension of a finite group.  This is true in any of the signatures $\{+,\cdot\}$, $\{+,\cdot,0\}$, $\{+,\cdot,1\}$ or $\{+,\cdot,0,1\}$.
\end{pro}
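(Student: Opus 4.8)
The plan is to work entirely with the multiplicative reduct. Write $M$ for the multiplicative monoid of $S$; by the flat semiring hypothesis $M$ has a (multiplicatively and additively) absorbing zero, namely the top element of the $+$-semilattice, and by Lemma~\ref{lem:cancellative} it satisfies the two $0$-cancellative laws. We may assume $0\ne 1$ in $M$, since otherwise $S$ is the one-element semiring (a degenerate case). The addition on $S$ is then completely determined: $x+x=x$ and $x+y=0$ for $x\ne y$. So the proposition is really a statement about finite monoids with zero satisfying the $0$-cancellative laws: such an $M$ either contains a copy of the three-element monoid $\{1,a,a^2\}$ with $a^3=a^2$ (as a submonoid containing $1$ and $0$), or is of the form $G^0$ for a finite group $G$.

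The first step is the observation that the only idempotents of $M$ are $0$ and $1$: if $e=e^2\ne 0$, then $e\cdot e=e\cdot 1\ne 0$, and the first $0$-cancellative law forces $e=1$. Next, fix any $x\in M\setminus\{0\}$ and consider the cyclic submonoid $\langle x\rangle$, which is finite. There are two cases. If $0\in\langle x\rangle$, let $n\ge 2$ be least with $x^n=0$ and put $a:=x^{n-1}$; then $a\ne 0$ by minimality of $n$, $a\ne 1$ (otherwise $x^n=x\cdot a=x\ne 0$), and $a^2=x^{2n-2}=0$ since $2n-2\ge n$. Hence $\{1,a,0\}$ is a subsemiring of $S$, and reading off its multiplication table and its (flat) addition table shows it is isomorphic to $S_7$.

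In the remaining case, no nonzero element of $M$ has a power equal to $0$. For $x\in M\setminus\{0\}$, finiteness of $\langle x\rangle$ gives an idempotent power $x^m$ (with $m\ge 1$); by assumption $x^m\ne 0$, so $x^m=1$ by the first step, and therefore $x$ is a unit with inverse $x^{m-1}$ (which is nonzero, as $x\cdot x^{m-1}=1\ne 0$). Thus every nonzero element of $M$ belongs to the group of units $G$, so $M=G\cup\{0\}=G^0$; since the addition of $S$ is the flat addition, $S=\flat(G)$, the flat extension of the finite group $G$. Finally, passing to any of the signatures $\{+,\cdot\}$, $\{+,\cdot,0\}$, $\{+,\cdot,1\}$, $\{+,\cdot,0,1\}$ changes nothing: in a flat semiring with identity the elements $0$ and $1$ are already distinguished, the subsemiring $\{1,a,0\}$ above contains both of them, and $\flat(G)$ of course contains both as well, so the isomorphism of $\{1,a,0\}$ with $S_7$ and the identification $S=\flat(G)$ respect the extra constants.

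There is no serious obstacle here; the argument is short. The only points needing care are, in the first case, extracting from the mere fact that \emph{some} power of $x$ vanishes exactly the submonoid $\{1,a,a^2\}$ with $a^2=0$ and $a\notin\{0,1\}$ (handled by taking $a=x^{n-1}$ for the minimal $n$), and, in the second case, making sure that forcing every idempotent power to equal $1$ really does make every nonzero element a unit — that is, that $M\setminus\{0\}$ is literally the group of units, not merely a union of subgroups.
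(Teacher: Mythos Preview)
Your proof is correct and follows essentially the same approach as the paper's: both reduce to showing that the only multiplicative idempotents are $0$ and $1$, then split on whether some nonzero element has nilpotent part (yielding $\{1,a^k,0\}\cong S_7$) or every nonzero element is a unit (yielding $\flat(G)$). The only cosmetic difference is that you deduce the idempotent dichotomy from the $0$-cancellative law of Lemma~\ref{lem:cancellative}, whereas the paper uses the flat addition directly via $e=(1+e)e$; these are equivalent.
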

\begin{proof}
Assume that $S$ is a finite flat semiring with multiplicative identity $1\neq 0$.  If $S$ consists of only $1$ and $0$ then it is the flat extension of the trivial group.  Otherwise, assume $S$ contains $a\notin\{1,0\}$. The idempotent power $e$ of $a$ has $ee=e$ and then because $S$ is flat and $e=1e+ee=(1+e)e$ it follows that $e=1$ or $e=0$.  If all elements $a\neq 0$ have idempotent power equal to $1$ then $S$ is the flat extension of a finite group.  Otherwise, there is $a$ with $a\neq 0$ but $a^n=0$ for some $n>1$.  Choose $k\geq 1$ maximum such that $a^k\neq0$.  Then $\{1,a^k,0\}$ is a subsemiring isomorphic to~$S_7$ (in any of the listed signatures).
\end{proof}
We mention that the flat extension of the one-generated free monoid $\{a\}^*$ is a flat semiring with identity that does not contain $S_7$ as a subsemiring.

We now consider a further rich source of examples.  First we note that whenever~$S$ is an ai-semiring with a proper nonempty subset $J$ that is both a multiplicative absorbing ideal and an order-theoretic filter (with respect to $+$), then we may form the \emph{ideal quotient} $S/J$ obtained by identifying all elements in $J$.  The semiring $S_7$ arises as the ideal quotient of $\flat(\{a\}^*)$ for example, using $J=\{a^k\mid k>1\}$.
Now consider the free semigroup $X^+$ on some alphabet $X$.
Let $W$ be a set of words from~$X^+$ and let $W^\leq$ denote the closure in $X^+$ of $W$ under taking (nonempty) subwords.
We define a semigroup on $W^\leq\cup\{0\}$ by letting $0$ be a multiplicative zero element, and for $\bu,\bv\in W^\leq$ letting $\bu\cdot \bv=\bu\bv$ if $\bu\bv\in W^\leq$ and $0$ otherwise.
This construction will be denoted $S(W)$ and is easily seen to be the ideal quotient of $\flat(X^+)$ with respect to $J:=\{\bw\in X^+\mid \bw\notin W^{\leq}\}$.
If we start with $X^*$ (where the empty word is now automaticaly in $W^\leq$), then the corresponding semigroup is a monoid and we use the notation~$M(W)$, though we note that in many previous articles, the notation $S(W)$ was variously used for both the monoid version and the semigroup version.  Following standard notation, we also let $S(\bw)$ abbreviate $S(\{\bw\})$ in the case where $\bw$ is a single word, and similarly for~$M(\bw)$.
There is a commutative variant of this idea.  We let $S_c(W)$ (and~$M_c(W)$) denote the same construction but formed on the free commutative semigroup (or monoid, respectively) generated by $X$.  These again satisfy the conditional cancellativity condition of Lemma \ref{lem:cancellative} so that the following lemma is immediate.
\begin{lem}
For any set of words $W$, the semigroups $S(W)$, $M(W)$, $S_c(W)$ and $M_c(W)$ are flat semirings if the zero element is taken as the top element.
\end{lem}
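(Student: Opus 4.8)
The plan is to apply Lemma~\ref{lem:cancellative}. Each of $S(W)$, $M(W)$, $S_c(W)$, $M_c(W)$ is, by its very construction, a semigroup equipped with a multiplicative zero element~$0$ (each is an ideal quotient of a flat group or its monoid analogue), so it suffices to verify the two $0$-cancellative laws $xy\approx xz\not\approx 0\rightarrow y\approx z$ and $xy\approx zy\not\approx 0\rightarrow x\approx z$. Granting these, the statement that these semigroups become flat semirings with $0$ at the top of the $+$-semilattice is precisely the conclusion of Lemma~\ref{lem:cancellative}.

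The verification rests on a single observation about the multiplication. In $S(W)$ we have $\bu\cdot\bv=\bu\bv$ (concatenation in $X^+$) when $\bu\bv\in W^{\leq}$, and $\bu\cdot\bv=0$ otherwise; likewise for $M(W)$ over $X^*$, and for $S_c(W)$, $M_c(W)$ with concatenation replaced by the product in the free commutative semigroup (monoid) on $X$, i.e.\ addition of the associated finitely supported maps $X\to\NN$. Hence a product in one of these semigroups is nonzero exactly when it coincides with the honest product of its two factors computed in the ambient free object $F$ ($=X^+$, $X^*$, or their commutative analogues).

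Now suppose $\bu\cdot\bv=\bu\cdot\bw\not\approx 0$ in one of these semigroups. By the observation above, this common value equals $\bu\bv$ computed in $F$ and also equals $\bu\bw$ computed in $F$, so $\bu\bv=\bu\bw$ holds in $F$. Since concatenation in a free semigroup or monoid is cancellative, and addition of finitely supported maps $X\to\NN$ is cancellative in the commutative case, $F$ is cancellative, and we conclude $\bv=\bw$. The second $0$-cancellative law follows by the symmetric argument (and is literally the same law in the commutative cases), and the cases where one of $\bu,\bv,\bw$ is the empty word (possible only for $M(W)$ and $M_c(W)$) present no difficulty since cancellativity of $F$ covers them. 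There is essentially no obstacle; the only point meriting care is the reduction of the $0$-cancellative laws to ordinary cancellativity of $X^+$, $X^*$ and their commutative counterparts, which is exactly why the result is immediate from Lemma~\ref{lem:cancellative}.
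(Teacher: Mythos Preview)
Your proposal is correct and follows exactly the approach the paper indicates: the paper simply asserts that these semigroups ``again satisfy the conditional cancellativity condition of Lemma~\ref{lem:cancellative} so that the following lemma is immediate,'' and you have written out precisely the verification of $0$-cancellativity via cancellativity of the ambient free (commutative) semigroup or monoid that this assertion presupposes.
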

The semiring $S_7$ arises in this way as  $S_7=M(a)=M_c(a)$, where $a$ is a single letter.  When $a_1,\dots,a_n$ are distinct letters and $n>2$, the semiring $S_c(a_1\dots a_n)$ will be a special example of one of the hypergraph semirings to be constructed in Section~\ref{sec:hypergraphs}, and in fact this particular family of semirings holds an important place in the lattice of semiring varieties generated by semirings of the form $S_c(W)$ and~$M_c(W)$.
\begin{pro}\label{pro:SinM}
For every $n$ and any set of words $W$ containing at least one nonempty word, the semirings $S_c(a_1\dots a_n)$ and $M_c(a_1\dots a_n)$ are contained in the variety generated by $M_c(W)$.
\end{pro}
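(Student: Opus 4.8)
The plan is to route everything through the single semiring $S_7$. Concretely, I would establish two things: (i) $M_c(a_1\cdots a_n)\in\Var(S_7)$ for every $n$, and (ii) $S_7\in\Var(M_c(W))$ whenever $W$ contains a nonempty word. Together with the observation that $S_c(a_1\cdots a_n)$ is a subsemiring of $M_c(a_1\cdots a_n)$ — its universe is that of $M_c(a_1\cdots a_n)$ with the empty word $1$ deleted, and the remaining set $\{\prod_{i\in T}a_i : \emptyset\ne T\subseteq\{1,\dots,n\}\}\cup\{0\}$ is visibly closed under $\cdot$ and $+$ — this finishes the proof, since then both $M_c(a_1\cdots a_n)$ and $S_c(a_1\cdots a_n)$ lie in $\Var(S_7)\subseteq\Var(M_c(W))$.

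For (i) I would write down an explicit surjective homomorphism $\psi\colon S_7^{\,n}\onto M_c(a_1\cdots a_n)$; since $S_7^n\in\Var(S_7)$ and varieties are closed under homomorphic images, this suffices. Writing $S_7=\{1,a,0\}$ as in the table above, put $\psi(x_1,\dots,x_n)=0$ if some $x_i=0$, and otherwise $\psi(x_1,\dots,x_n)=\prod_{i\,:\,x_i=a}a_i$ (a squarefree monomial, read as $1$ when the index set is empty). Surjectivity is immediate. For the homomorphism property the key structural match is that both algebras are flat and that operations in a power are computed coordinatewise: a product $x\cdot y$ of two tuples without $0$-coordinates acquires a $0$-coordinate exactly when $\{i:x_i=a\}$ and $\{i:y_i=a\}$ intersect, which is precisely the condition under which the monomials $\psi(x),\psi(y)$ share a letter and hence multiply to $0$ in $M_c(a_1\cdots a_n)$; when the supports are disjoint one checks directly that $\psi(x\cdot y)=\psi(x)\psi(y)$. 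For addition, if $x\ne y$ then $x+y$ has a $0$-coordinate in $S_7^n$ (because $u+v=0$ for distinct $u,v$ in the flat semiring $S_7$), so $\psi(x+y)=0$, and $\psi(x),\psi(y)$ are then either distinct monomials — whose sum is $0$ in the flat semiring $M_c(a_1\cdots a_n)$ — or both $0$; and $\psi$ is trivially additive when $x=y$. This last point is exactly where the hypothesis that $a_1,\dots,a_n$ are distinct letters each occurring once is used: it makes $T\mapsto\prod_{i\in T}a_i$ injective, so that the coordinatewise collapse behaves correctly.

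For (ii), fix a nonempty word $\bw\in W$ and a letter $b$ occurring in it, and set $k=\max\{j : b^j\in W^{\leq}\}\ge 1$. The subsemiring of $M_c(W)$ generated by $\{1,b\}$ is $\{1,b,b^2,\dots,b^k,0\}$, with $b^i b^j=b^{i+j}$ when $i+j\le k$ and $0$ otherwise, and with all sums of distinct elements equal to $0$; this is a copy of $M(b^k)$. The subset $J=\{b^2,\dots,b^k,0\}$ is a multiplicatively absorbing ideal and an order-theoretic filter for $+$ (in a flat order every element sits below the top element $0$, so any subset of a flat semiring that contains $0$ is a filter), so the ideal quotient by $J$ is defined and is the three-element flat semiring on $\{1,b,0\}$ with $b^2=0$, i.e.\ $S_7$. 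Hence $S_7$ is a homomorphic image of a subsemiring of $M_c(W)$, which is exactly (ii); note that here it is essential that we use the monoid construction $M_c(W)$, since we need the identity $1$ available to separate $M(b^k)/J$ from the two-element semiring.

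The only point requiring real care is pinning down $\psi$ in (i) and running through the product and sum cases; everything in (ii) and the reduction is routine. The essential content is the single observation that the ``overlapping supports force the product to vanish'' behaviour of $M_c(a_1\cdots a_n)$ is reproduced faithfully by the identity $a^2=0$ holding in each coordinate of $S_7$.
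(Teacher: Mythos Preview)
Your argument is correct and follows essentially the same route as the paper's proof: both factor the problem through $S_7=M_c(a)$, first observing $S_7\in\Var(M_c(W))$, then realising $M_c(a_1\cdots a_n)$ (and hence $S_c(a_1\cdots a_n)$) inside $\Var(S_7)$ via the $n$th power $S_7^n$. The only difference is presentational: the paper takes the subsemiring of $S_7^n$ generated by the tuples $1_i^a$ and then forms the ideal quotient by the set $J$ of tuples with a $0$-coordinate, whereas you define a surjective homomorphism $\psi$ from the full power $S_7^n$ onto $M_c(a_1\cdots a_n)$ directly; your $\psi$ restricted to the paper's subsemiring has kernel exactly $J$, so the two constructions agree. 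Your part~(ii) spells out in detail what the paper simply declares to be clear.
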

\begin{proof}
It is clear that $M_c(a)=S_7$ lies in the variety of $M_c(W)$, so it suffices to prove the result in the case that $W=\{a\}$.  We use the notation $1_i^a$ to denote the $n$-tuple consisting of $1$ everywhere except in coordinate $i$ where it equals $a$.  Consider the subsemiring $S$ of $S_7^n$ generated by $\{1_i^a\mid 1\leq i\leq n\}$ and let $J$ denote the set of elements of $S$ containing a $0$ coordinate (to obtain $M_c(a_1\dots a_n)$, add the constant tuple $1$ at this step).  The set $J$ is trivially a multiplicative semigroup ideal because~$0$ is a multiplicative zero.  It is also a filter in the $+$-order because $0$ is the maximum element.  Thus we have a well defined ideal quotient $S/J$.  It is routine to see that  $S/J$ is isomorphic to $S_c(a_1\dots a_n)$ as required.
\end{proof}

We may also prove the following analogue of Proposition~\ref{pro:SinM} for the $S_c(W)$ construction.
\begin{pro}\label{pro:SinS}
Let $W$ be a set of words containing a word $\bw$ that is not a power of a single letter.  Then the semiring $S_c(a_1\dots a_n)$ is contained in the variety generated by $S_c(W)$, for any $n\leq |\bw|$.
\end{pro}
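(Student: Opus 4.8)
The plan is to reduce to the case $W=\{\bw\}$ with $n=|\bw|$, and then realise $S_c(a_1\dots a_{|\bw|})$ as a homomorphic image of a subsemiring of a finite power of $S_c(\bw)$. Write $m:=|\bw|$. For $n\le m$ the subsemiring of $S_c(a_1\dots a_m)$ generated by $a_1,\dots,a_n$ is a copy of $S_c(a_1\dots a_n)$ (its underlying set is the square-free words in $a_1,\dots,a_n$ together with $0$), so it suffices to treat $n=m$. Also $K:=(W^{\le}\setminus\bw^{\le})\cup\{0\}$ is a proper nonempty multiplicative absorbing ideal and order-filter of $S_c(W)$ — if $\bu\in W^{\le}$ is not a subword of $\bw$, then neither is $\bu\bv$ for any product $\bu\bv\in W^{\le}$ — and the ideal quotient $S_c(W)/K$ is isomorphic to $S_c(\bw)$. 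Hence $S_c(\bw)\in\Var(S_c(W))$, and it remains to prove $S_c(a_1\dots a_m)\in\Var(S_c(\bw))$.

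Write $\bw=c_1^{e_1}\cdots c_k^{e_k}$ with $c_1,\dots,c_k$ the distinct letters; the hypothesis on $\bw$ gives $k\ge2$, and $m=e_1+\cdots+e_k$. Let $\mathcal F$ be the finite nonempty set of functions $f\colon\{1,\dots,m\}\to\{c_1,\dots,c_k\}$ with $|f^{-1}(c_i)|=e_i$ for all $i$, i.e.\ the ways of distributing the $m$ letter-occurrences of $\bw$ over the $m$ indices. In $S_c(\bw)^{\mathcal F}$ put $(g_i)_f:=f(i)$ and let $S$ be the subsemiring generated by $g_1,\dots,g_m$. Every element of $S$ is a sum of products $\prod_{i\in M}g_i$ over nonempty multisets $M$ on $\{1,\dots,m\}$; for a \emph{set} $T$ the set-product $\prod_{i\in T}g_i$ has $f$-coordinate $\prod_i c_i^{|f^{-1}(c_i)\cap T|}$, which is a subword of $\bw$, so every set-product is nonzero in every coordinate. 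I claim $g_i\mapsto a_i$ extends to a homomorphism $\phi\colon S\to S_c(a_1\dots a_m)$; being necessarily onto, it then yields $S_c(a_1\dots a_m)\in\mathbf H(S)\subseteq\Var(S_c(\bw))$, finishing the proof.

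For $\phi$ to be well defined, observe that under $g_i\mapsto a_i$ a sum $\sum_\alpha\prod_{i\in M_\alpha}g_i$ maps to $0$ in $S_c(a_1\dots a_m)$ unless all the $M_\alpha$ coincide with a single repetition-free set $T$, in which case it maps to $a^T$ and the element itself equals the set-product $\prod_{i\in T}g_i$. So it is enough to show: a set-product $\prod_{i\in T}g_i$ can equal, in $S$, another set-product only if that set-product is $\prod_{i\in T}g_i$; a product $\prod_{i\in M}g_i$ over a multiset $M$ only if $M=T$; and a sum of two or more distinct products, never. The sum case is immediate: if $u\ne v$ in $S$ they differ in some coordinate, where $u+v=0$, so a sum containing both is $0$ there, whereas set-products are nowhere $0$. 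For the rest, suppose $\prod_{i\in T}g_i=\prod_{i\in M}g_i$ in $S$. If $|M|>m$ then in every coordinate $f$ some fibre $f^{-1}(c_i)$ carries total $M$-multiplicity exceeding $e_i$, making $\prod_{i\in M}g_i$ equal $0$ there — impossible. So $|M|\le m$, and then equality of $f$-coordinates gives $|f^{-1}(c_i)\cap T|=\sum_{j\in f^{-1}(c_i)}\mu_M(j)$ for every $i$ and every $f\in\mathcal F$; comparing two such $f$ that differ by a single swap of indices between two fibres — available since $k\ge2$ — forces $\mu_M(j)-[j\in T]$ to be independent of $j$, and this common value must be $0$ (a positive value forces $|M|\ge m+1$, a negative one forces $M=\emptyset$). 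Hence $M=T$, which completes the argument.

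I expect the genuine difficulty to be the case in which every $e_i\ge2$, for instance $\bw=a^2b^2$. When some letter of $\bw$ has multiplicity one there is a much simpler construction — one coordinate per generator, that generator set to a multiplicity-one letter and the others arranged to fill out $\bw$, so that each $g_i^2$ becomes literally $0$ in its own coordinate. But $S_c(\bw)$ has no multiplicative identity, so when no letter of $\bw$ is simple there is no such padding available, and one really must use the full family $\mathcal F$ of balanced distributions, together with the swapping argument above, in order to separate the set-products from all the other elements of $S$.
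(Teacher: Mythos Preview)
Your proof is correct. The core construction is identical to the paper's: both index the power by the set $\mathcal F$ (the paper's $R$) of rearrangements of $\bw$ and take $g_i$ to be the tuple of $i$-th letters; the paper then passes to an ideal quotient by the set of elements not dividing $g_1\cdots g_m$, while you realise the same quotient as a surjection $S\to S_c(a_1\dots a_m)$ directly. Where the two arguments genuinely diverge is in the key combinatorial step. The paper's Claim~1 shows only that no multiset-product other than $g_1\cdots g_m$ can equal the constant tuple $\bw$, and does so by a somewhat intricate case analysis on the minimal letter-multiplicity $i_k$ versus the maximal repeat count $m_2$. Your swap argument is both cleaner and stronger: comparing two rearrangements that differ by a single transposition yields $\mu_M(p)-[p\in T]=\mu_M(q)-[q\in T]$ for all $p,q$, so this quantity is constant, and that constant must be $0$; this separates \emph{every} set-product from every other multiset-product, not just the top one, which is exactly what your direct-homomorphism packaging needs. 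You also make the reduction from $n<m$ to $n=m$ explicit via the subsemiring on $a_1,\dots,a_n$, which the paper handles only implicitly. One minor wording issue: a negative constant $c$ does not literally force $M=\emptyset$ in every case---when $T\neq\{1,\dots,m\}$ it already gives $\mu_M(j)=-1$ for $j\notin T$, which is impossible---but either way $c<0$ is excluded and your conclusion stands.
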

\begin{proof}
Suppose  $\bw$ is a word in $W$ that has length exactly $n$ and that is not a power of a single letter.  It is clear that $S_c(\bw)$ is in the variety of $S_c(W)$, so it suffices to show that $S_c(a_1\dots a_n)$ is in the variety of $S_c(\bw)$.

By commutativity of $\cdot$, we may assume that $\bw$ is written in the form $b_1^{i_i}b_2^{i_2}\dots b_k^{i_k}$ for some distinct letters $b_1,\dots,b_k$ and with $i_1+i_2+\dots+i_k=n$.
There is no loss of generality also in assuming that $i_1\geq i_2\geq \dots\geq i_k$.  Let us denote the set of all $\binom{n}{i_1,\dots,i_k}$ distinct rearrangements of the word $\bw$ as $R$ and for each $i\leq n$ let $\mathsf{a}_i$ denote the $R$-tuple from  $\big(S_c(\bw)\big)^R$ whose position at $r\in R$ is given by the $i^{\rm th}$ position of the rearrangement $r$.
Let $S$ denote the subsemiring of $S_c(\bw)^n$ generated by $\{\mathsf{a}_i\mid i=1,\dots,n\}$.
The product $\mathsf{w}:=\mathsf{a}_1\dots \mathsf{a}_n$ in $S$ has all coordinates equal to $\bw$ (noting that $S_c(\bw)$ is commutative).
We let $J$ denote the set of all elements of $S$ that do not divide $\mathsf{w}$.
This is obviously a multiplicative ideal, but it is also an order-theoretic filter because it contains all tuples with a $0$-coordinate, and the remaining elements of $S$ are incomparable in the $+$-order.
Thus we may form the semiring $S/J$, which we will show is isomorphic to $S_c(a_1\dots a_n)$, under the obvious map defined on generators by $\mathsf{a}_i\mapsto a_i$.
Trivially,  each subset $I\subseteq \{1,\dots,n\}$ has $\prod_{i\in I}\mathsf{a}_{i}\notin J$.  We can use the following claim to show that these are the only products that give rise to elements not in $J$, from which the isomorphism property easily follows.

{\bf Claim 1.} If $\mathsf{a}_{j_1}\dots \mathsf{a}_{j_\ell}$ is a product of generators equal to $\mathsf{w}$, then $\ell=n$ and $\{j_1,\dots,j_\ell\}=\{1,\dots,n\}$.
\begin{proof}[Proof of Claim 1.]
It is trivial that $\ell=n$ is required, as each coordinate of $\mathsf{w}$ is $\bw$, a word of length $n$.  Assume for contradiction that a product $\mathsf{a}_{j_1}\dots \mathsf{a}_{j_n}=\mathsf{w}$ but $\{j_1,\dots,j_n\}\neq\{1,\dots,n\}$.  Thus there is at least one number in $\{1,\dots,n\}$ missing from $\{j_1,\dots,j_n\}$ and at least one number appears twice in the sequence $j_1,\dots,j_n$.  Without loss of generality we may assume that $1\notin \{j_1,\dots,j_n\}$ and $2$ appears at least twice.  By commutativity we may assume that $j_1=j_2=2$.  For each $i\leq k$ let $R_i\subseteq R$ denote those rearrangements that begin with the letter $b_i$ and let $R_{i,j}$ denote the subset of $R_i$ that have second letter equal to $b_j$.  For $j\leq n$, let $m_j$ denote the multiplicity of $j$ in the sequence $j_1,\dots,j_\ell$; so $m_1=0$ and $m_2\geq 2$.

If $i_k=1$ then we are done because on coordinates $r\in R_k$, the tuples $\mathsf{a}_2,\dots,\mathsf{a}_n$ have no occurrences of $b_k$, contradicting $\mathsf{a}_{j_1}\dots \mathsf{a}_{j_n}(r)=\bw$.  Thus we may assume that $i_k\geq 2$.  If $i_k<m_2$ then we have an immediate contradiction, because $\mathsf{a}_{j_1}\dots \mathsf{a}_{j_n}$ has at least $m_2>i_k$ occurrences of $b_k$ on all coordinates in $R_{1,k}$.
If $i_k=m_2$ then we obtain a similar contradiction, because the assumption that $i_k\geq 2$ ensures that for every rearrangement $r$ in $R_{1,k}$ there is a $j>2$ such that the $j^{\rm th}$ letter is $b_k$.
Then if $m_j>0$  there would be more than $m_2+m_j>i_k$ occurrences of $b_k$ in $\mathsf{a}_{j_1}\dots \mathsf{a}_{j_n}(r)$, contradicting  $\mathsf{a}_{j_1}\dots \mathsf{a}_{j_n}(r)=\bw$.  Avoiding this contradiction requires $m_j=0$ for all $j>2$, so that $m_2=n$.  But then $\mathsf{a}_{j_1}\dots \mathsf{a}_{j_n}=\mathsf{a}_2^n$ is not equal to $\bw$ on any coordinate, also a contradiction.

Thus we may assume that $i_k> m_2$.  On each $r\in R_{1,k}$ there are always $i_k-1$ values $j$ from $\{3,4,\dots,n\}$ for which $\mathsf{a}_j(r)=b_k$, however in $\mathsf{a}_{j_1}\dots \mathsf{a}_{j_n}(r)$ we have  $m_2$ occurrences arising from the repetition of $\mathsf{a}_2$, thus only $i_k-m_2$ that are provided by $\mathsf{a}_{j_{m_2}+1}(r),\dots,\mathsf{a}_{j_n}(r)$.
It follows from $m_2>1$ that there is a number $j\in\{3,\dots,n\}$ with $m_j=0$; without loss of generality we may assume $m_3=0$.
Because $m_2=n$ leads immediately contradiction we may further assume without loss of generality that $m_4>0$.
Now consider two rearrangements $r_1,r_2$ in $R_{1,k}$ for which the positions of $b_k$ differ only between positions $3$ and $4$; for example, as $i_1\geq i_2\geq i_k\geq 2$, the rearrangement $r_1$ in $R_{1,k}$ may have $b_1$ in position $3$ and $b_k$ in position $4$, while $r_2$ has $b_k$ in position $3$, and $b_1$ in position $4$, but is otherwise identical.
Then the number of occurrences of $b_k$ in $\mathsf{a}_2^{m_2}\mathsf{a}_{j_{m_2}+1}\dots\mathsf{a}_{j_n}(r_1)$ differs by $m_4$ from the number in $\mathsf{a}_{j_{m_2}+1}\dots\mathsf{a}_{j_n}(r_2)$, contradicting the fact that both equal $\bw$.
\end{proof}
Now we may complete the main proof of the proposition.  We need to show that a product $\mathsf{a}_{j_1}\dots \mathsf{a}_{j_\ell}$ of generators divides $\mathsf{w}$ if and only if $j_1,\dots,j_n$ contains no repeats.  The ``if'' direction is trivial.  The ``only if'' direction follows from  Claim 1, because a concatenation of letters that includes a repeat, cannot extend to one that avoids repeats.  It now follows that $S_c(a_1\dots a_n)$ is isomorphic to $S/J$, under the map determined on generators by $a_i\mapsto \mathsf{a}_i$.
\end{proof}
\begin{remark}
In Proposition \ref{pro:SinS}, the condition that a word needs to consist of more than a single letter is necessary, because
\[
S(a^k)\models x_1\dots x_k\approx x_1\dots x_k+ x_1^k,
\]
which obviously fails on $S(a_1\dots a_k)$ when $k>1$.
\end{remark}
The semirings $S_c(a_1\dots a_n)$ (for $n\geq 3$) will play an important role in the article.   As already noted, the semiring $S_7$ coincides with $M_c(a)$, while we later find a parallel role for the semiring $S_c(abb)$.
\begin{lem}\label{lem:incomp}
For $k>2$, neither the semiring $S_7$ nor the semiring $S_c(ab^{k-1})$ lies in the variety generated by the other.
\end{lem}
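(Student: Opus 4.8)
The plan is to separate the two varieties by exhibiting a single identity in each direction, in both cases exploiting the bounded word-lengths available in $S_c(ab^{k-1})$ and the bounded multiplicative period of $S_7$. Throughout I will use the elementary fact that in a flat semiring with at least two elements an equation $t\approx t+z$, where $z$ is a variable not occurring in the term $t$, holds if and only if $t$ is the constant-$0$ term function: if $t$ always evaluates to the top element $0$ then $t+z=0+z=0=t$, while conversely, substituting for $z$ any value distinct from the (necessarily constant) value of $t$ forces $t=t+z=0$. This lets me phrase ``a product collapses to $0$'' as a genuine $\{+,\cdot\}$-identity.

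For $S_7\notin\Var\big(S_c(ab^{k-1})\big)$: every nonzero element of $S_c(ab^{k-1})$ is a nonempty subword of $ab^{k-1}$, hence has length between $1$ and $k$. Consequently a product $u_1\cdots u_{k+1}$ of any $k+1$ elements of $S_c(ab^{k-1})$ equals $0$ --- either one of the factors is $0$, or the concatenation has length exceeding $k$ and so is not a subword of $ab^{k-1}$. Thus $S_c(ab^{k-1})\models x_1\cdots x_{k+1}\approx x_1\cdots x_{k+1}+z$. This fails in $S_7=M_c(a)$: the substitution sending every $x_i$ to the multiplicative identity $1$ and $z$ to $0$ gives value $1$ on the left and $1+0=0$ on the right, and $1\neq 0$.

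For $S_c(ab^{k-1})\notin\Var(S_7)$: since $1^2=1^3$, $a^2=a^3=0$ and $0^2=0^3$, the multiplicative reduct of $S_7$ satisfies $x^2\approx x^3$, and hence so does $S_7$ as a semiring. When $k>2$ this identity fails in $S_c(ab^{k-1})$: evaluating at $x=b$ yields $b^2$, a nonzero subword of $ab^{k-1}$ (as $k-1\ge 2$), whereas $b^3$ is either $0$ (when $k=3$) or a subword distinct from $b^2$ (when $k\ge 4$); in either case $b^2\neq b^3$.

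I do not anticipate a genuine obstacle: the construction of $S_c(ab^{k-1})$ makes the relevant length and period bounds transparent, and the only slightly delicate point is the reduction of ``identically $0$'' to an honest identity, which is precisely the flat-semiring observation recorded above (and is the device already underlying the remark following Proposition~\ref{pro:SinS}).
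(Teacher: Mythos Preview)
Your proof is correct and follows essentially the same approach as the paper: $x^2\approx x^3$ separates in one direction, and $(k+1)$-nilpotence of $S_c(ab^{k-1})$ in the other. The only difference is cosmetic---the paper expresses nilpotence directly as the identity $x_1\cdots x_{k+1}\approx y_1\cdots y_{k+1}$ (failing in $S_7$ under $x_i\mapsto 1$, $y_i\mapsto 0$), which sidesteps your preliminary flat-semiring observation, but the underlying idea is identical.
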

\begin{proof}
We have $S_7\models x^2\approx x^3$, while $S_c(ab^{k-1})$ fails this when $x$ is assigned $b$.  Conversely, $S_c(ab^{k-1})$ is $(k+1)$-nilpotent (satisfying $x_1\dots x_{k+1}\approx y_1\dots y_{k+1})$, which fails on $S_7$ when the $x_i$ are assigned $1$ while the $y_i$ are assigned $0$.
\end{proof}
In contrast, Propositions \ref{pro:SinM} and~\ref{pro:SinS} together immediately imply the following.
\begin{lem} \label{lem:a1tok}
For $k>1$ we have $\mathsf{V}(S_c(a_1\dots a_{k}))\subseteq \mathsf{V}(S_7)\wedge \mathsf{V}(S_c(ab^{k-1}))$.
\end{lem}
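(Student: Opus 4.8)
The plan is to simply assemble the two previously established embeddings, since the meet $\mathsf{V}(S_7)\wedge \mathsf{V}(S_c(ab^{k-1}))$ is literally the intersection of the two varieties, and a variety contains $\mathsf{V}(A)$ as soon as it contains the algebra $A$ itself. So the whole task reduces to checking that $S_c(a_1\dots a_k)$ is a member of each of $\mathsf{V}(S_7)$ and $\mathsf{V}(S_c(ab^{k-1}))$.

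First I would recall from Section~\ref{sec:SW} that $S_7=M_c(a)$, where $a$ is a single letter. Then Proposition~\ref{pro:SinM}, applied with $W=\{a\}$ (a set containing one nonempty word) and with $n:=k$, immediately yields $S_c(a_1\dots a_k)\in\mathsf{V}(M_c(a))=\mathsf{V}(S_7)$.

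Next I would invoke Proposition~\ref{pro:SinS} with $W=\{\bw\}$ where $\bw:=ab^{k-1}$. The only point to verify is that $\bw$ satisfies the hypothesis of that proposition, namely that it is not a power of a single letter: this is exactly where $k>1$ is used, since it guarantees that $\bw$ contains both the letters $a$ and $b$. As $|\bw|=k$, taking $n:=k\le|\bw|$ in Proposition~\ref{pro:SinS} gives $S_c(a_1\dots a_k)\in\mathsf{V}(S_c(ab^{k-1}))$.

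Combining the two memberships, $S_c(a_1\dots a_k)$ lies in both $\mathsf{V}(S_7)$ and $\mathsf{V}(S_c(ab^{k-1}))$, hence in their intersection $\mathsf{V}(S_7)\wedge\mathsf{V}(S_c(ab^{k-1}))$; since the latter is a variety, it contains the whole of $\mathsf{V}(S_c(a_1\dots a_k))$, which is the claimed inclusion. There is no real obstacle here beyond noting that the hypotheses of the two propositions are met; the content of the lemma is entirely carried by Propositions~\ref{pro:SinM} and~\ref{pro:SinS}.
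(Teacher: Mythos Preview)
Your proposal is correct and follows exactly the paper's approach: the paper states that the lemma follows immediately from Propositions~\ref{pro:SinM} and~\ref{pro:SinS}, and you have simply spelled out the straightforward verification that the hypotheses of those propositions are met (using $S_7=M_c(a)$ for the first and the observation that $ab^{k-1}$ is not a power of a single letter when $k>1$ for the second).
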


The (multiplicative) semigroups, both $S_c(W)$ and $M_c(W)$ are commutative, so are finitely based as semigroups, Perkins \cite{per}.  Similarly, the underlying semigroup of $\flat(G)$ is the Clifford semigroup $G^0$, which is finitely based as a semigroup (provided $G$ is finite), due primarily due to the fact that every finite group has a finite basis for its equations, Oates and Powell \cite{oatpow}.  The semigroups $S(W)$ are also finitely based as semigroups (assuming $W$ is finite) as they are nilpotent, and every term is equivalent to one of length at most one more than the longest word in $W$.  The semigroups $M(W)$ are variously finitely based and nonfinitely based, and since the original work of Perkins \cite{per}, have been amongst the richest source of challenging behaviour for the finite basis property and related problems in finite semigroups; see the work of Olga Sapir \cite{osap}, Jackson and Sapir \cite{jacsap}, amongst many others.  Following the  notation from~\cite{jacvol}, we say that an algebra $A$ (or variety $V$) satisfying a property $P$ is \emph{inherently nonfinitely based} (INFB) \emph{relative to $P$}  if every variety satisfying $P$ and containing $A$ (or $V$, respectively) is without a finite basis for its equations.  When $P$ is ``generates a locally finite variety'', then we omit reference to $P$ and say that $A$ (or $V$) is \emph{inherently nonfinitely based} (INFB).  It is a trivial consequence of the work of Mark Sapir \cite{sap2}, that no semigroup $M(W)$ (for finite~$W$) is INFB. \ The following folklore result appears to be well known to researchers worldwide but does not appear to have been stated in published form in English.
\begin{thm}\label{thm:INFB}
\begin{enumerate}
\item Let $S=\langle S;+,\cdot\rangle$ be an ai-semiring whose multiplicative reduct $\langle S;\cdot\rangle$ is locally finite.  Then $S$ is locally finite.
\item Let $\Sigma$ be a finite set of semigroup identities defining a locally finite variety of semigroups.  Then, in conjunction with the axioms for ai-semirings, the identities $\Sigma$ define a locally finite variety of ai-semirings.
\item The multiplicative reduct of an ai-semiring $S$ is INFB if $S$ is INFB.
\end{enumerate}
\end{thm}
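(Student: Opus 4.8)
The three parts will be proved in order; (1) and (2) rest on the same elementary observation, and (3) is then a formal consequence of (2). For (1), I would fix a finite $X\subseteq S$ and let $T$ be the multiplicative subsemigroup of $\langle S;\cdot\rangle$ generated by $X$, which is finite by hypothesis, say $|T|=N$. The key point is exactly the ``terms correspond to sets of multiplicative terms'' principle noted in the introduction: using only distributivity of $\cdot$ over $+$ together with the idempotency and commutativity of $+$, every semiring term over $X$ can be rewritten, within the ai-semiring axioms, as a finite nonempty sum of multiplicative words over $X$. Hence every element of the semiring subalgebra $\langle X\rangle\le S$ has the form $\sum_{t\in A}t$ for some nonempty $A\subseteq T$, and since the $+$-reduct is a semilattice this value depends only on the set $A$. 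Therefore $|\langle X\rangle|\le 2^{N}-1$, so $S$ is locally finite.

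For (2), let $\mathcal V=\Mod(\Sigma)$ be the locally finite semigroup variety and let $\mathcal W$ be the variety of ai-semirings axiomatised by the ai-semiring axioms together with $\Sigma$. Fix a finite set $X$ and let $F$ be the $\mathcal W$-free algebra over $X$. Since $F\models\Sigma$ and $\Sigma$ involves only $\cdot$, the reduct $\langle F;\cdot\rangle$ satisfies $\Sigma$, so the multiplicative subsemigroup $T\le\langle F;\cdot\rangle$ generated by $X$ lies in $\mathcal V$ and is finitely generated; hence $T$ is a homomorphic image of the free $\mathcal V$-semigroup over $X$, which is finite because $\mathcal V$ is locally finite. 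Thus $T$ is finite, and exactly as in (1) every element of $F$ is a sum over a nonempty subset of $T$, so $F$ is finite and $\mathcal W$ is locally finite. (Note that this does not simply reduce to (1): the multiplicative reduct of $F$ need not be finitely generated as a semigroup, so one really does need the term-rewriting bookkeeping directly.)

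For (3), suppose $S$ is INFB but its multiplicative reduct is not; then $\langle S;\cdot\rangle$ belongs to some finitely based, locally finite semigroup variety $\Mod(\Sigma)$ with $\Sigma$ finite. Let $\mathcal W$ be the ai-semiring variety defined by the ai-semiring axioms together with $\Sigma$: it is finitely based by construction, locally finite by part (2), and contains $S$ (an ai-semiring whose multiplicative reduct satisfies $\Sigma$), which contradicts the assumption that $S$ is INFB. (If one admits the degenerate reading in which $\langle S;\cdot\rangle$ generates a non-locally-finite variety, this is excluded by the contrapositive of part (1), since $S$ is locally finite when it is INFB.) The only step needing any care is the rewriting behind (1) and (2): one should confirm that pushing $\cdot$ through $+$ and discarding repeated summands uses the ai-semiring axioms alone, so that the resulting sum genuinely ranges over the multiplicative subsemigroup generated by $X$; I expect this to be the only even slightly delicate point, with everything else being routine.
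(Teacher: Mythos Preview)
Your proof is correct and follows essentially the same approach as the paper: the key observation in all three parts is that distributivity together with idempotent, commutative $+$ forces every semiring element generated by $X$ to be a sum of multiplicative words in $X$, giving the bound $2^{|T|}$.

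One small correction: your parenthetical claim in (2) that it ``does not simply reduce to (1)'' is mistaken. The paper proves (2) precisely by this reduction: any ai-semiring $R$ satisfying $\Sigma$ has multiplicative reduct $\langle R;\cdot\rangle\in\Mod(\Sigma)$, which is locally finite by hypothesis, so (1) applies directly to $R$. Part (1) requires only that the multiplicative reduct be \emph{locally finite}, not finitely generated, so your worry about $\langle F;\cdot\rangle$ not being finitely generated as a semigroup is irrelevant. Your free-algebra argument is of course valid, but it is just re-running the proof of (1) rather than invoking it.
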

\begin{proof}
For (1), let $A$ be a finite subset of $S$.  The subsemigroup $S_{A,(\cdot)}$ of $\langle S;\cdot\rangle$ generated by $A$ is finite, but by distributivity and idempotence, every element of the subsemiring $S_{A,(+,\cdot)}$ of $S$ generated by $A$ can be written as a sum of elements of~$S_{A,(\cdot)}$, without repeats, showing that $|S_{A,(+,\cdot)}|\leq 2^{|S_{A,(\cdot)}|}$.

Item (2) follows from (1) given that all semirings satisfying $\Sigma$ have locally finite semigroup reducts, and hence are locally finite.  Item (3) is an immediate corollary of (2).
\end{proof}
\begin{remark}
Theorem \ref{thm:INFB} admits many generalisations and variations.  The proof used only the fact that arbitrary elements of $S_{A,(+,\cdot)}$ can be written as sums of elements of $S_{A,(\cdot)}$.  This requires only left and right distributivity of $\cdot$ over $+$, and enough properties of $+$ to ensure that that sums over a finite alphabet are equivalent sums of bounded length.   As an example, the theorem generalises to semirings satisfying only a periodic law for addition, as the variety of commutativity $+$-semigroups of any given index and period is locally finite.  These ideas generalise to larger signatures, subject to the availability of suitable distributivity properties.  Something very similar also occurs, for example, in the class of (one-sided) restriction semigroups, see \cite[Theorem 9.2]{jon}.  There is also no significant generating power obtained by adding (finitely many) constants, so that semirings with $0$ also satisfy a version of the theorem.
\end{remark}
%%%%
%%%%
\section{Hypergraphs and hypergraph semirings}\label{sec:hypergraphs}
Our primary construction is a reimagining of some hypergraph methods developed for semigroups in~\cite{jac:SAT} and then subsequently in \cite{jaczha}, however the methods in the present article are a parallel rather than direct application.  In \cite{jac:SAT,jaczha} the constructions depend in a fundamental way on the structure of the non-commutative semigroup~$B_2^1$:  hyperedge encodings are kept separate in long products by way of a noncommuting barrier.  In contrast, the commutativity of $\cdot$ in~$S_7$ and~$S_c(abc)$ appears to play a central role in the arguments of the present article, and the hyperedge encodings are instead gathered together by long sums. %It is not clear why such an uncanny parallel exists between these otherwise quite different structures.

We begin with some basic background on hypergraphs and related concepts; there is some overlap with the articles \cite{hamjac:hyper} and \cite{jaczha}, as well as the manuscript~\cite{jac:SAT}, however we are not able to present our semiring-specific developments without presenting them here again.

We adopt standard notation and terminology, as in \cite{hamjac:hyper}.  A \emph{hypergraph} is a pair $H=(V,E)$, where $E$ is a family of nonempty subsets of $V$.  If $|e|\leq k$ for each $e\in E$, we say that $H$ is a \emph{$k$-hypergraph}.  If $|e|=k$ for all $e\in E$ then $H$ is a \emph{$k$-uniform hypergraph}.  We also view $k$-hypergraphs as relational structures in the signature of a single $k$-ary relation consisting of the $k$-tuples $\{(v_1,\dots,v_k)\mid \{v_1,\dots,v_k\}\in E\}$; we refer to a $k$-hypergraph as a \emph{$k$-hypergraph structure} when in this context.  So,  for a hyperedge $\{u,v\}$ in a $3$-hypergraph will, in the corresponding $3$-hypergraph structure, give rise to six tuples $uuv,uvu,vuu,uvv,vuv,vuu$; here we adopt a standard convention of abbreviating tuples $(x,y,z)$ as strings $xyz$.  Similarly, any relational structure in the signature of a single $k$-ary relation $R$ becomes a $k$-hypergraph, by taking the hyperedges $\{v_1,\dots,v_k\}$ for each tuple $(v_1,\dots,v_k)\in R$.  The $k$-hypergraph structure formed from a general $k$-relational structure is a kind of ``set-closure'' of the relations: a tuple is present in this closure if it contains the same elements as some other tuple in the relation.

A \emph{path} in a hypergraph is a sequence $v_0,e_0,v_1,e_1,\dots,v_{n-1},e_{n-1}$ alternating between vertices and hyperedges, with no repeats, and such that $v_0\in e_0$ and $v_{i+1}\in e_i\cap e_{i+1}$ (with addition in the subscript modulo $n$).  A path is a \emph{cycle} if we also have $v_0\in e_{n-1}\cap e_0$; we often write $v_0,e_0,v_1,e_1,\dots,v_{n-1},e_{n-1},v_0$.
The \emph{girth} of hypergraph is the length of the shortest cycle; hypergraphs without cycles are called \emph{hyperforests} and are conventionally set to have infinite girth.

We will also make use of the $2$-element structure $\mathbbold{2}=\langle \{0,1\};R\rangle$, where $R$ is the ternary relation $\{110,101,011\}$.  This is not quite a $3$-hypergraph structure, but it is similar, and of the same signature.  The homomorphism problem for $\mathbbold{2}$ is the positive 2-in-3SAT problem and is well known to be \texttt{NP}-complete.  This computational problem is more frequently encountered as \emph{positive 1-in-3SAT}, as in~\cite{jac:SAT}  for example, however we are switching the role of $0$ and $1$ in the present article, as this notation better matches the semiring $S_7$.

The positive 2-in-3SAT problem corresponds to a specific kind of 3-hypergraph colouring problem: a function from $V\to\{0,1\}$ in which each 3-hyperedge is sent to one of the tuples $\{110,101,011\}$.  This extends further to $k$-hypergraphs: a \emph{$(k-1)$-in-$k$ satisfaction} of a $k$-hypergraph is a function $V\to\{0,1\}$ mapping each hyperedge to $\{\overbrace{1\dots11}^{k-1}0,1\dots 101,\dots,011\dots 1\}$: that is, all but one vertex in each hyperedge is given the value $1$.  The more general and more familiar notion of $\ell$-colouring of a $k$-hypergraph $\mathbb{H}=(V,E)$ is a function $\gamma$ from $V$ to $(\{0,\dots,\ell-1\},F)$ where $F$ consists of all subsets of $\{0,\dots,\ell-1\}$ that have cardinality between $2$ and~$k$ (inclusive) and such that $\gamma(e)\in F$ for all $e\in E$.  (This definition carries across equivalently to the setting of $k$-hypergraph structures; see \cite[\S2]{hamjac:hyper}.)  The hypergraph $2$-colourability of $3$-hypergraphs structures can be seen to be just  the well known \emph{positive NAE3SAT} problem.  One final notion is required.  A $k$-hypergraph is \emph{${\leq}i$-robustly $(k-1)$-in-$k$ satisfiable} if every valid partial $(k-1)$-in-$k$ satisfaction of any set of at most $i$ vertices extends to a full $(k-1)$-in-$k$-satisfaction (see \cite{AGK,ham,hamjac,jac:SAT}).  We will only use the case of $i=2$ here, so give some further details on what the concept  means in this particular case.  Any mapping from vertices $x,y$ to one of the pairs $(1,1),(0,1),(1,0)$ is a valid partial satisfaction, while $(0,0)$ is valid only if $\{x,y\}$ is \emph{not} a subset of a hyperedge: if $\{x,y\}$ was a subset of hyperedge $e$ then it is trivial that the partial assignment $x\mapsto 0$ and $y\mapsto 0$ cannot extend to a $(k-1)$-in-$k$ satisfying assignment, as any such assignment would give exactly one vertex in $e$ the value $0$.

Our hardness and nonfinite axiomatisability results will hinge on two convenient facts relating to $k$-hypergraph structures.  The first item is proved by Jackson \cite[Theorem 6.2]{jac:SAT}.  The second item is by Erd\H{o}s and Hajnal \cite{erdhaj}; see Theorems 2.6 and 2.7 of \cite{hamjac:hyper} for further discussion.
\begin{thm}\label{thm:facts}
\begin{enumerate}
\item  The following promise-problem is \texttt{NP}-hard for any fixed $k\geq 3$.  Given a $k$-uniform hypergraph $H=(V,E)$ with girth at least $5$\up:
\begin{itemize}
\item[Yes:] $H$ is ${\leq}2$-robustly 2-in-3 satisfiable.
\item[No:] $H$ is not 2-in-3 satisfiable.
\end{itemize}
\item For every $n$ and every $\ell\geq 2$ there exists a $k$-uniform hypergraph $H_{n,\ell}$ that is not $\ell$-colourable, but every $n$-element substructure is a $k$-uniform hyperforest.
\end{enumerate}
\end{thm}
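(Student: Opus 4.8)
\medskip
\noindent\emph{Proof proposal.}
Both items are quoted from the literature --- item (1) is \cite[Theorem~6.2]{jac:SAT} and item (2) is the Erd\H os--Hajnal theorem \cite{erdhaj} (see also \cite[\S2]{hamjac:hyper}) --- so the plan is ultimately to appeal to those sources; what follows sketches the two routes one would take.

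For item (2) the plan is the probabilistic \emph{deletion} (alteration) method. Fix $k\ge 3$, $n$ and $\ell\ge 2$, put $c:=1/(2\ell)$, and let $\mathbb H$ be the random $k$-uniform hypergraph on $v$ vertices in which each of the $\binom vk$ potential edges is present independently with probability $p:=Kv^{1-k}\log v$, for a constant $K=K(k,\ell)$ fixed below. Two estimates suffice. A cycle of length $j$ uses at most $j(k-1)$ vertices and exactly $j$ edges, so the expected number of cycles of length at most $n$ is at most a constant multiple of $\sum_{j=2}^{n}v^{j(k-1)}p^{j}=\sum_{j=2}^{n}(K\log v)^{j}=o(v)$; hence with probability tending to $1$ there are fewer than $v/2$ of them. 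Separately, by a union bound the probability that some set of $t:=\lceil cv\rceil$ vertices is independent (edge-free) is at most $\binom vt(1-p)^{\binom tk}\le\exp\!\big(t\log v-p\binom tk\big)$, whose exponent tends to $-\infty$ once $K$ is large enough; hence with probability tending to $1$, $\alpha(\mathbb H)<cv$. Choosing $v$ large so that both events occur, fix such an $\mathbb H$ and, for each cycle of length at most $n$, delete one of its vertices together with every edge through it. The result $H$ is a $k$-uniform hypergraph on more than $v/2$ vertices; since deleting vertices and edges can only destroy cycles, $H$ has girth greater than $n$, so every $n$-element subhypergraph of $H$ is a hyperforest. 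Finally, an independent set of $H$ is independent in $\mathbb H$, so $\alpha(H)\le\alpha(\mathbb H)<cv<|V(H)|/\ell$, whence no $\ell$-colouring of $H$ exists. This produces $H_{n,\ell}$; the $k$-hypergraph-structure form then follows from the structure/hypergraph dictionary of Section~\ref{sec:hypergraphs}.

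For item (1) the plan is a chain of polynomial-time, satisfiability-preserving gadget reductions from positive $1$-in-$3$SAT --- equivalently, under the $0\leftrightarrow1$ relabelling used here, the $(k-1)$-in-$k$ satisfiability problem for $k$-uniform hypergraphs with no girth condition --- which is \texttt{NP}-complete by Schaefer's dichotomy. Two features must be engineered into the output $H$. To obtain girth at least $5$ one ``spreads out'' the incidence structure: every configuration that forces a short cycle (two edges meeting in two or more vertices, and cycles of length $3$ or $4$) is replaced by a locally tree-like gadget on fresh vertices that simulates the intended adjacency using only $(k-1)$-in-$k$ constraints; the key point to verify is that each gadget has a satisfying assignment extending any prescribed assignment of its boundary vertices, so that satisfiability is preserved in both directions. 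To make the YES-case ${\leq}2$-robustly satisfiable, one appends a padding construction giving each vertex enough local slack that no valid assignment of at most two vertices can be blocked: a valid assignment of two vertices lying in different gadget components decouples into two single-vertex assignments, and a single-vertex assignment extends provided no vertex of a satisfiable instance is globally forced --- again arranged by a satisfiability-preserving gadget. The composite reduction sends satisfiable instances to ${\leq}2$-robustly satisfiable $k$-uniform hypergraphs of girth at least $5$, and unsatisfiable instances to unsatisfiable ones, which is exactly the required \texttt{NP}-hardness of the promise problem.

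I expect item (1) to be the main obstacle: one needs gadgets that are simultaneously (a) expressible using only $(k-1)$-in-$k$ constraints and faithful to the intended adjacency, (b) of girth at least $5$, and (c) boundary-flexible, both for two-way correctness of the reduction and for the ${\leq}2$-robustness of the YES images. Balancing all three is precisely the intricate combinatorial bookkeeping carried out in \cite[Theorem~6.2]{jac:SAT}; item (2) is by comparison a routine application of the probabilistic method.
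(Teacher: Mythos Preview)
Your proposal is correct and matches the paper's approach exactly: the paper provides no proof of this theorem at all, simply stating that item~(1) is \cite[Theorem~6.2]{jac:SAT} and item~(2) is Erd\H{o}s--Hajnal~\cite{erdhaj} (with pointer to \cite{hamjac:hyper}), precisely the citations you identify. Your additional sketches---the probabilistic deletion argument for~(2) and the gadget-reduction outline for~(1)---go beyond what the paper offers and are sound as outlines of the cited proofs.
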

We note that unfortunately it is not possible to extend the \texttt{NP}-hardness of the promise problem in (1) to the more extreme possibilities of (2): it is known that there is a polynomial time algorithm to distinguish $2$-in-$3$-colourable hypergraphs (the YES instances of positive 2-in-3SAT) from those that are not $2$-colourable (the NO instances of NAE3SAT); see \cite[Example 7.15 and Theorem 7.19]{bragur} or \cite[\S8]{BKO}.

The next lemma will be a $k$-hypergraph generalisation of \cite[Lemma 6.3]{jac:SAT}; the arguments are also quite similar, though we give them for completeness.
Let us say that a set of vertices $\{v_1,\dots,v_\ell\}$ is a \emph{subhyperedge} for some $k$-hypergraph $\mathbb{H}$ if it is a subset of a hyperedge of $\mathbb{H}$. Two subhyperedges of size $k-1$ will be said to be \emph{linked}, if there is a single vertex $w$ that completes both of the subhyperedges to full hyperedges of $\mathbb{H}$.  The \emph{link graph} of $\mathbb{H}$ is the graph whose vertices are the $(k-1)$-element sets of vertices, with two such sets adjacent if they are linked.  We make only marginal use of the link graph, though it is useful conceptual tool in understanding the consequences of some of the defining properties of our semiring construction later.
\begin{lem}\label{lem:hyperprop2}
Let $k>2$.  If $\mathbb{H}$ is a $k$-uniform hypergraph of girth at least $4$, then the following are true.
\begin{itemize}
\item[(I)] No two distinct hyperedges share more than one common vertex.
\item[(II)] If $\ell>2$ and every proper subset of $\{v_1,\dots,v_\ell\}$ is a subhyperedge, then $\{v_1,\dots,v_\ell\}$ is a subhyperedge; in particular, $\ell\leq k$.
\item[(III)] For all $\ell>1$, a set $\{v_1,\dots,v_\ell\}$ is a subhyperedge if and only if all $2$-element subsets of $\{v_1,\dots,v_\ell\}$ are subhyperedges.
\item[(IV)] The link graph of $\mathbb{H}$ consists of a disjoint union of cliques.
\end{itemize}
\end{lem}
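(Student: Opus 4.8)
The plan is to establish parts (I)--(IV) of Lemma~\ref{lem:hyperprop2} in sequence, using each part in the proof of the next, and invoking the girth hypothesis only to exclude cycles of length $2$ and $3$. For (I), if two distinct hyperedges $e\ne f$ had two common vertices $u\ne v$, then $u,e,v,f,u$ would be a cycle of length $2$, contradicting girth at least $4$; hence distinct hyperedges meet in at most one vertex.

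For (II), assume $\ell>2$ and that every proper subset of $\{v_1,\dots,v_\ell\}$ is a subhyperedge, and for each $i$ choose a hyperedge $e_i\supseteq\{v_1,\dots,v_\ell\}\setminus\{v_i\}$. If $e_i=e_j$ for some $i\ne j$ then this hyperedge contains $(\{v_1,\dots,v_\ell\}\setminus\{v_i\})\cup(\{v_1,\dots,v_\ell\}\setminus\{v_j\})=\{v_1,\dots,v_\ell\}$ and we are finished; otherwise the $e_i$ are pairwise distinct. When $\ell\ge 4$, the edges $e_1$ and $e_2$ share the at-least-two vertices $v_3,\dots,v_\ell$, so (I) forces $e_1=e_2$, a contradiction. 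When $\ell=3$, the distinct hyperedges $e_1\supseteq\{v_2,v_3\}$, $e_2\supseteq\{v_1,v_3\}$, $e_3\supseteq\{v_1,v_2\}$ form the cycle $v_1,e_3,v_2,e_1,v_3,e_2,v_1$ of length $3$, again contradicting the girth bound. So $\{v_1,\dots,v_\ell\}$ is a subhyperedge, and since every hyperedge has exactly $k$ elements, $\ell\le k$.

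Part (III) is then routine: the forward implication is immediate, and for the converse one inducts on $\ell$, the case $\ell=2$ being trivial and the inductive step following from (II) once one observes that if all $2$-element subsets of $\{v_1,\dots,v_\ell\}$ are subhyperedges then so are all $2$-element subsets of each proper subset $S$, whence $S$ is a subhyperedge by the induction hypothesis (singletons being subhyperedges automatically once $\ell\ge 2$). For (IV), first note that by (I) every $(k-1)$-element subhyperedge $B$ has a \emph{unique} completing vertex $w_B$ with $B\cup\{w_B\}\in E$, since two such hyperedges would share the $k-1\ge 2$ vertices of $B$. Two $(k-1)$-element sets $A,B$ are then linked precisely when both are subhyperedges and $w_A=w_B$: any vertex $w$ witnessing the link satisfies $A\cup\{w\},B\cup\{w\}\in E$, forcing $w=w_A=w_B$, and conversely $w_A=w_B$ exhibits the link. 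Consequently the link graph is the disjoint union, over the vertices $w$ of $\mathbb{H}$, of the cliques $\{B:\ |B|=k-1,\ w_B=w\}$, together with the $(k-1)$-element sets that are not subhyperedges left as isolated vertices --- that is, a disjoint union of cliques.

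The only step that is not mere bookkeeping is the $\ell=3$ case of (II): there neither (I) nor an appeal to a shorter cycle is available, so after disposing of the possibility that two of $e_1,e_2,e_3$ coincide one must exhibit the length-$3$ cycle explicitly. The remainder of the argument is an unwinding of (I) together with a straightforward induction.
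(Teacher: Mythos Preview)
Your proof is correct and follows essentially the same route as the paper's: (I) via $2$-cycles, (II) via (I) for $\ell\ge 4$ and a $3$-cycle for $\ell=3$, (III) reduced to (II), and (IV) from the uniqueness of completing vertices forced by (I). The only cosmetic differences are that the paper proves (III) by taking a minimal non-subhyperedge subset rather than by explicit induction, and proves (IV) by checking transitivity of the link relation directly rather than via your cleaner map $B\mapsto w_B$; neither difference is substantive.
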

\begin{proof}
Item (I) is equivalent to the absence of $2$-cycles in a hypergraph: if $\{u,v\}$ is a subset of two distinct hyperedges $e,f$ then $u,e,v,f,u$ is a $2$-cycle.
For item~(II), assume that every proper subset of $\{v_1,\dots,v_\ell\}$ is a subhyperedge.  If $\ell>3$ then we can use (I) to deduce that $\{v_1,\dots,v_\ell\}$ is a subhyperedge as follows.  For each $(\ell-1)$-element subset $S$---a subhyperedge by assumption---identify a hyperedge $e_S$ containing $S$. As distinct $(\ell-1)$-element subsets $S,T$ of $\{v_1,\dots,v_\ell\}$ share $\ell-2\geq 2$ elements, it follows from (I) that $e_S=e_T$ so that there was in fact a single hyperedge $e$ containing every $\ell-1$ element subset of $\{v_1,\dots,v_\ell\}$.  Thus $\{v_1,\dots,v_\ell\}\subseteq e$.  For the case that $\ell=3$ use the fact that if all $2$-element subsets of $\{v_1,v_2,v_3\}$ are subhyperedges, then we have hyperedges $e_1$ containing $\{v_1,v_2\}$, $e_2$ containing $\{v_2,v_3\}$ and $e_3$ containing $\{v_1,v_3\}$.  As there are no $3$-cycles by assumption, the sequence $v_1,e_1,v_2,e_2,v_3,e_3,v_1$ contains a repeat $e_i=e_j$, so that there is a hyperedge containing $\{v_1,v_2,v_3\}$.

Now for (III).  The forward implication is trivial, so let assume now that $\{v_1,\dots,v_\ell\}$ is not a subhyperedge. If there is a subset of size at least $2$ that is not a subhyperedge we may solve the problem for this and obtain the desired solution for $\{v_1,\dots,v_\ell\}$.  Thus there is no loss of generality in assuming that either $\ell=2$ or every subset of $\{v_1,\dots,v_\ell\}$ is a subhyperedge.  But then (II) implies that $\ell=2$ and we are done.

If we let $\sim$ denote the edge relation of the link graph, then property (IV) is equivalent to
\[
A\sim B\sim C\rightarrow (A\sim C\vee A=C)
\]
 for link graph vertices $A,B,C$.  Assume that $A=\{u_1,\dots,u_{k-1}\}$ is linked to $B=\{v_1,\dots,v_{k-1}\}$ by way of some vertex $v$ and that $B$ is linked to $C=\{w_1,\dots,w_{k-1}\}$ by way of some vertex $w$.  But then $v=w$ as $\{v_1,\dots,v_{k-1},v\}$ and $\{v_1,\dots,v_{k-1},w\}$ overlap by more than $1$ vertex so that either
 \[
 C=\{w_1,\dots,w_{k-1},w\}=\{u_1,\dots,u_{k-1},w\}=A
 \]
 or that $A$ links to $C$, as required.
\end{proof}

The following lemma is essentially  a minor modification of Lemmas 2.8 and~2.9 of \cite{hamjac:hyper}, and the proof is included for completeness only.  We use the familiar (and easy to prove) property that every finite hyperforest either contains no hyperedges at all, or it contains a leaf: a hyperedge that is has just at most one vertex in common with any other hyperedge.
\begin{lem}\label{lem:forest}
A $k$-uniform hyperforest $\mathbb{F}$ is ${\leq}2$-robustly $(k-1)$-in-$k$ satisfiable.
\end{lem}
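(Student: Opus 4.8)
The natural approach is induction on the number of hyperedges of $\mathbb{F}$, exploiting the fact recalled just above the lemma: a finite hyperforest with at least one hyperedge has a \emph{leaf} hyperedge $e$, i.e.\ there is at most one vertex $w$ of $e$ --- the \emph{attachment vertex}, which may fail to exist when $e$ is a connected component by itself --- that lies in some hyperedge other than $e$.  If $\mathbb{F}$ has no hyperedges, every total extension of a given partial assignment is vacuously a $(k-1)$-in-$k$ satisfaction, so there is nothing to prove.

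For the inductive step, fix a leaf $e$ with attachment vertex $w$ and set $\mathbb{F}':=(V,E\setminus\{e\})$, a $k$-uniform hyperforest with one fewer hyperedge.  The key structural remark is that every vertex of $e\setminus\{w\}$ lies in no hyperedge of $\mathbb{F}'$; hence the values on $e\setminus\{w\}$ are irrelevant to satisfying $\mathbb{F}'$ and can be chosen freely \emph{after} $\mathbb{F}'$ has been handled.  Given a valid partial $(k-1)$-in-$k$ satisfaction $\phi$ on a set $P$ with $|P|\le 2$, I will construct a valid partial assignment $\phi'$ on at most two vertices of $\mathbb{F}'$, invoke the induction hypothesis to obtain a $(k-1)$-in-$k$ satisfaction $\psi'$ of $\mathbb{F}'$ extending $\phi'$, and then fill in the vertices of $e\setminus\{w\}$ so that exactly one vertex of $e$ gets the value $0$ while $\phi$ is respected on $P\cap e$ (any vertex of $P$ lying in no hyperedge at all is harmless and is simply assigned its prescribed value).

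Two elementary observations drive the construction of $\phi'$: (a) any two vertices of $e$ form a subhyperedge, so validity of $\phi$ forces at most one vertex of $P\cap e$ to receive the value $0$; and (b) $|e\setminus\{w\}|=k-1\ge 2$.  If some $u\in P\cap(e\setminus\{w\})$ has $\phi(u)=0$, then $u$ is forced to be the unique $0$-vertex of $e$, so I let $\phi'$ agree with $\phi$ on every vertex of $P$ that still lies in a hyperedge of $\mathbb{F}'$ and additionally set $\phi'(w)=1$ (consistent by validity, should $w\in P$); since $u$ lies in no hyperedge of $\mathbb{F}'$ this keeps $|\dom\phi'|\le 2$, and after obtaining $\psi'$ I put $\psi(u)=0$ and assign $1$ to every other vertex of $e$.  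Otherwise every vertex of $P\cap(e\setminus\{w\})$ already has value $1$; then I generically take $\phi'$ to be $\phi$ restricted to those vertices of $P$ lying in a hyperedge of $\mathbb{F}'$, and, having obtained $\psi'$, use $w$ as the $0$-vertex of $e$ when $\psi'(w)=0$, and otherwise set some vertex of $e\setminus\{w\}$ lying outside $P$ to $0$ --- such a vertex exists unless $e\setminus\{w\}\subseteq P$.  That last configuration forces $k=3$, $w\notin P$, $P=e\setminus\{w\}$ and $\phi\equiv 1$ on $P$; but then no vertex of $P$ lies in a hyperedge of $\mathbb{F}'$, so taking $\phi'=\{w\mapsto 0\}$ and using $w$ as the $0$-vertex settles this case too.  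In each branch $\phi'$ has domain of size at most $2$ and is valid for $\mathbb{F}'$: validity is inherited because the subhyperedges of $\mathbb{F}'$ form a subfamily of those of $\mathbb{F}$, and adjoining the value $1$ to a vertex never creates a forbidden all-$0$ pair.

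The only genuine difficulty is the bookkeeping: one must verify in every branch that $\phi'$ never uses more than two vertices and remains valid, and that a legal site for the unique $0$-vertex of $e$ is always available.  The delicate point is the case $k=3$ with both vertices of $P$ inside the leaf $e$, which is precisely why the ``remaining configuration'' above must be isolated and treated on its own; everything else is routine.
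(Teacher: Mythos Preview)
Your proof is correct and follows essentially the same leaf-peeling induction as the paper: both arguments remove a leaf hyperedge $e$, invoke the induction hypothesis on the smaller hyperforest with a carefully chosen ${\leq}2$-vertex partial assignment, and then fill in the free vertices of $e$ to place its unique $0$.  Your case split (on whether some vertex of $P\cap(e\setminus\{w\})$ is assigned $0$) differs cosmetically from the paper's (on where the pair $x,y$ sits relative to $e$), and you are more explicit about the delicate $k=3$ corner case and about bounding $|\dom\phi'|$, but the underlying idea is the same.
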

\begin{proof}
This is by induction on the number of hyperedges of $\mathbb{F}$.  For a single hyperedge the statement is trivial.  Now assume that it is true for $k$-uniform hyperforests consisting of $n$ hyperedges, and consider a $k$-uniform hyperforest $\mathbb{F}$ of $n+1$ hyperedges.  Identify a leaf $e$ of $\mathbb{F}$. If $e$ is an isolated hyperedge then the statement follows immediately, so we assume that $e$ does share one vertex, $v$ with the remaining part of $\mathbb{F}$.  Let $\mathbb{F}_e$ be the result of removing the hyperedge $e$ and all of its vertices except $v$.  Every $(k-1)$-in-$k$ satisfaction of $\mathbb{F}_e$ extends  to a $(k-1)$-in-$k$ satisfaction of $\mathbb{F}$: if $v$ is coloured $0$ then this colouring is unique, and if $v$ is coloured $1$ then there are $k-1$ choices of a vertex in $e\backslash\{v\}$ to colour $0$ and the rest are coloured $1$.  It is now easy to use the assumed ${\leq}2$-robust $(k-1)$-in-$k$ satisfiability of $\mathbb{F}_e$ to prove the same for $\mathbb{F}$.  For pairs of vertices $x,y$ that lie in $\mathbb{F}_e$ the required property holds by induction.  If exactly one is  in $e$ (say, $x$) and neither are $v$, then we may fix any colouring of $\mathbb{F}_e$ that achieves the desired colour for $x$ and then use the free extendability to $e\backslash\{v\}$ just observed to give either of $0$ or $1$ to $y$.  If both are in $e$ then we can $(k-1)$-in-$k$ satisfy $e$ to achieve any colouring of $x,y$ except the prohibited $(x,y)\mapsto (0,0)$ and then extend this to a $(k-1)$-in-$k$ satisfaction of the rest of $\mathbb{F}$ by choosing any colouring of $\mathbb{F}_e$ that takes the already determined colour for $v$ (which can be done by induction).
\end{proof}
Let us consider, for some $k>2$, a $k$-uniform hypergraph $\mathbb{H}$ of girth $m\geq 4$, noting that each of the properties in Lemma \ref{lem:hyperprop2} will hold for $\mathbb{H}$; we further assume that $\mathbb{H}$ contains no isolated vertices, as such vertices play no role in the colourability properties (nor in $(k-1)$-in-$k$ satisfiability properties) of $\mathbb{H}$.  We now construct an ai-semiring $S_\mathbb{H}$ from $\mathbb{H}$; collectively we refer to these as \emph{hypergaph semirings}, though note that these are entirely different from the hypergraph algebras and flat hypergraph algebras of \cite{kunver}, which are rarely semirings.  In the case that $k=3$, the construction is in surprisingly close analogy to the semigroup $S_I$ of \cite[\S6]{jac:SAT}\footnote{The similarity is  because both encode the hypergraph hyperedges by way of products, but the  algebraic scaffolding that encodes the broader homomorphism and colouring properties of hypergraphs are perhaps less similar: one uses the structure of the main regular $\mathscr{D}$-class of $B_2^1$, while the other depends fundamentally on the flat semilattice structure of $S_7$ and related semirings.} and the present ideas loosely follow those in \cite{jac:SAT}, as much as this is possible to make sense, given the different settings.  Only $k=3$ is needed to prove the nonfinite basis property for~$S_7$, but the move to arbitrary $k>2$ seems required for stronger properties.

 The generators of $S_\mathbb{H}$ consist of the following elements:
\begin{enumerate}
\item[(i)] $0$, a multiplicative and additive zero element;
\item[(ii)] for each vertex $v$ of $\mathbb{H}$ an element $\ba_v$.
\end{enumerate}
The multiplicative part of the semiring is subject to the following rules:
\begin{enumerate}
\item $\ba_u\ba_v=0$ if $u=v$ or $\{u,v\}$ is not a subhyperedge.
\item $\ba_u\ba_v=\ba_v\ba_u$.
\item $\ba_{u_1}\dots \ba_{u_k}=:\ba$ whenever $\{u_1,\dots,u_k\}$ is a hyperedge of $\mathbb{H}$.
\item $\ba_{u_1}\dots \ba_{u_{k-1}}=\ba_{v_1}\dots \ba_{v_{k-1}}$ if $\{u_1,\dots,u_{k-1}\}$ and $\{v_1,\dots,v_{k-1}\}$ are linked $(k-1)$-tuples.
\end{enumerate}
When  $\{u_1,\dots,u_{k-1}\}$ is a $(k-1)$-element subhyperedge, then we let $[u_1\dots u_{k-1}]$ denote the set of all $(k-1)$-sets $\{x_1,\dots,x_{k-1}\}$ such that $\{u_1,\dots,u_{k-1}\}$ is linked to $\{x_1,\dots,x_{k-1}\}$; so $[u_1\dots u_{k-1}]=[x_1\dots x_{k-1}]$ in this case.   We write $\ba_{[u_1\dots u_{k-1}]}$ to denote the value $\ba_{u_1}\ba_{u_2}\dots \ba_{u_{k-1}}$.  Rule (4) shows that when $\{u_1,\dots,u_{k-1}\}$ is linked to $\{x_1,\dots,x_{k-1}\}$ we have
\[
\ba_{[u_1\dots u_{k-1}]}=\ba_{u_1}\ba_{u_2}\dots \ba_{u_{k-1}}=\ba_{x_1}\ba_{x_2}\dots \ba_{x_{k-1}}=\ba_{[x_1\dots x_{k-1}]},
\]
 which is consistent with the fact that  $[u_1\dots u_{k-1}]=[x_1\dots x_{k-1}]$.  We note that item~(IV) of Lemma \ref{lem:hyperprop2} shows that we do not expect significant flow-on consequences of Rule (4); this is established in the course of the proofs anyway, but noting it may be useful to help establish intuition about the equalities defining $S_\mathbb{H}$.  We mention also that Rule (4) is essentially a special instance of the 0-cancellativity condition in Lemma \ref{lem:cancellative} and so is required for a construction to be a flat semiring.

Additively, we let $S_\mathbb{H}$ be flat ai-semiring: a height $1$ semilattice, with $0$ being an absorbing element for addition.  We also mention that it is possible to adjoin a multiplicative identity element $1$ to this semiring (also set to be order-theoretically incomparable to all other elements except $0$), and we denote this by $M_\mathbb{H}$.

Rules (1)--(4) (and the flat semiring property) are a semiring presentation, so $S_\mathbb{H}$ is certainly a semiring.  The following lemma shows that there are ``no surprises'' to the structure of $S_\mathbb{H}$: the obvious equalities in the rules defining $S_\mathbb{H}$ are the only ones.  This lemma is not used elsewhere in the article, and is given  only in order to aid  intuition of the construction.
\begin{lem}\label{lem:clarification}
Let $\mathbb{H}$ be a $k$-uniform hypergraph of girth at least $4$.
Every non-zero element in $S_\mathbb{H}$ is of the form $\ba_{u_1}\dots \ba_{u_\ell}$ for some subhyperedge $\{u_1,\dots,u_\ell\}$ of size $\ell\leq k$.  Two such products $\ba_{u_1}\dots \ba_{u_i}$ and $\ba_{v_1}\dots \ba_{v_j}$ are distinct if they are distinct subhyperedges, unless
\begin{itemize}
\item $i=j=k$, so that both $\{u_1,\dots ,u_k\}$ and $\{v_1,\dots ,v_k\}$ are hyperedges\up{:} in this case $\ba_{u_1}\dots \ba_{u_k}=\ba_{v_1}\dots \ba_{v_k}$\up{;} or
\item $i=j=k-1$ and $\{u_1,\dots ,u_{k-1}\}$ and $\{v_1,\dots ,v_{k-1}\}$ are linked\up{:} in this case $\ba_{u_1}\dots \ba_{u_{k-1}}=\ba_{v_1}\dots \ba_{v_{k-1}}$.
\end{itemize}
\end{lem}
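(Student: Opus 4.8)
The plan is to realise $S_\mathbb{H}$ as an explicit quotient of the concrete semiring $S_c(W_\mathbb{H})$, where $W_\mathbb{H}=\{u_1\cdots u_k\mid\{u_1,\dots,u_k\}\in E\}$ is the set of (commutative) hyperedge words. As each hyperedge is a $k$-element \emph{set}, these words are squarefree, so $W_\mathbb{H}^{\leq}$ is precisely the set of subhyperedges of $\mathbb{H}$, and the multiplicative semigroup of $S_c(W_\mathbb{H})$ has carrier $\{0\}\cup\{\text{subhyperedges of }\mathbb{H}\}$, with $\ba_A\ba_B$ equal to $A\cup B$ when $A,B$ are disjoint and $A\cup B$ is a subhyperedge, and equal to $0$ otherwise. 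Using Lemma~\ref{lem:hyperprop2}(III) (which needs girth at least $4$) one checks that, as a commutative semigroup with zero, this is exactly the semigroup presented by Rule~(1); hence the multiplicative semigroup of $S_\mathbb{H}$ --- which by Lemma~\ref{lem:cancellative} determines $S_\mathbb{H}$ entirely, the addition being the flat semilattice on the carrier --- is the quotient of that semigroup by the semigroup congruence $\rho$ generated by all pairs of hyperedge words (Rule~(3)) together with all pairs of linked $(k-1)$-subhyperedges (Rule~(4)). The whole lemma thus reduces to computing $\rho$.

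My claim is that $\rho$ is the equivalence relation $\sigma$ whose classes are: the singleton $\{0\}$; the set $E$ of all hyperedges; each link-class $[u_1\dots u_{k-1}]$ of $(k-1)$-subhyperedges; and a singleton for each subhyperedge of size at most $k-2$. The inclusion $\sigma\subseteq\rho$ is immediate: by Lemma~\ref{lem:hyperprop2}(IV) each link-class is a clique of the link graph, so any two distinct elements lying in one $\sigma$-class form a defining pair of $\rho$. For $\rho\subseteq\sigma$ it suffices to show $\sigma$ is a \emph{multiplicative} congruence (only multiplication matters, since the addition is reattached afterwards), and by commutativity this reduces to checking that $x\mathrel{\sigma}y$ implies $x\ba_v\mathrel{\sigma}y\ba_v$ for each generator $\ba_v$.

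The main obstacle is this last verification when $A$ and $A'$ are two distinct linked $(k-1)$-subhyperedges. The structural fact I would extract first, via Lemma~\ref{lem:hyperprop2}(I), is that every $(k-1)$-subhyperedge $B$ has a \emph{unique} completing vertex $w_B$ with $B\cup\{w_B\}\in E$: existence because $B$ lies in some hyperedge, uniqueness because two such hyperedges would overlap in the $k-1\ge 2$ vertices of $B$. Linkedness then forces $w_A=w_{A'}$, while the same overlap argument forces $A\cap A'=\emptyset$. It follows that $\ba_A\ba_v$ is the hyperedge $A\cup\{w_A\}$ when $v=w_A$ and is $0$ for every other $v$ (for $v\in A$ because of a repeated letter, otherwise because $A\cup\{v\}$ is a $k$-element set distinct from every hyperedge, hence not a subhyperedge); the identical description holds for $\ba_{A'}\ba_v$ since $w_{A'}=w_A$. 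Hence $\ba_A\ba_v$ and $\ba_{A'}\ba_v$ are simultaneously hyperedges (and so $\sigma$-related) or simultaneously $0$. The remaining cases are routine: if $x$ and $y$ are distinct hyperedges then $x\ba_v=0=y\ba_v$, since every product of at least $k+1$ generators is $0$; and singleton $\sigma$-classes require nothing. This establishes $\rho=\sigma$, so the multiplicative semigroup of $S_\mathbb{H}$ consists of $0$, the common value $\ba$ of all length-$k$ products over hyperedges, one value $\ba_{[u_1\dots u_{k-1}]}$ per link-class, and one value $\ba_{u_1}\cdots\ba_{u_\ell}$ per subhyperedge of size $\ell\le k-2$ --- which, once the flat addition is reinstated, is precisely the assertion of the lemma.
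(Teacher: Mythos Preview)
Your proof is correct and more explicit than the paper's own argument, which is deliberately only a sketch (the paper notes that the full distinctness claims follow from the model constructed later in Lemma~\ref{lem:HinS7}).  The paper argues directly on the presentation: it observes that each of Rules (1)--(4) either returns~$0$ or preserves the length of a product, deduces that two nonzero products can only be identified if they have the same length, and then treats the three length ranges $\ell<k-1$, $\ell=k-1$, $\ell=k$ separately, invoking Lemma~\ref{lem:hyperprop2}(IV) for the $\ell=k-1$ case.  Your route is different in organisation: you first realise the ``pre-quotient'' as the explicit semiring $S_c(W_\mathbb{H})$ (a nice tie-in with Section~\ref{sec:SW}), name the target equivalence $\sigma$ directly, and then verify that $\sigma$ is a multiplicative congruence containing the defining pairs.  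Your approach gives a self-contained argument independent of the later model, and the congruence check is cleanly localised to the one nontrivial case of linked $(k-1)$-subhyperedges, where you correctly extract the key structural fact (unique completing vertex) from Lemma~\ref{lem:hyperprop2}(I).  One small point you leave implicit is that the quotient $S_c(W_\mathbb{H})/\sigma$ is $0$-cancellative, so that the flat addition can be reinstated via Lemma~\ref{lem:cancellative}; this is a routine check along the lines of your completing-vertex argument, and the paper's sketch does not address it either.
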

\begin{proof}
The flat semilattice property ensures  every nonzero element can be written as a product of generators $\ba_{u_1}\dots \ba_{u_\ell}$.  (If we were to consider $M_\mathbb{H}$ then we would also have trivial extensions of this product involving $1$).  Lemma \ref{lem:hyperprop2}(III) and applications of Rules (1)--(2) then easily show that $\{u_1,\dots,u_\ell\}$ is a subhyperedge and that there are no repeats in $u_1,\dots,u_\ell$, so that $|\{u_1,\dots,u_\ell\}|=\ell\leq k$.  It now remains to show that except in the two itemised cases there are no further identifications.  We give sketch details only, reiterating that it is a consequence of later arguments (not depending on the present lemma).  Each of the Rules (1)--(4) either return $0$ or do not change the length of a product.  Thus two nonzero products $\ba_{u_1}\dots \ba_{u_i}$ and $\ba_{v_1}\dots \ba_{v_j}$ must have the same length $i=j=:\ell$.  Clearly Rule (2) only enables rearrangements up to commutativity.  If $\ell<k-1$ then this is the only rule that applies.   If $\ell=k$ then applications of Rule (4) apply universally to show equality of all nonzero products of length $k$, so there is nothing to prove.  When $\ell=k-1$, applications of Rule (3) does not yield further equivalences beyond those covered directly in the rule itself because of Lemma \ref{lem:hyperprop2}(IV).
\end{proof}	

\section{Hypergraph semirings in varieties}\label{sec:hypersemiringvariety}
The hypergraph semirings $S_\mathbb{H}$ encode hypergraphs in a transparent way.  In this section we find how membership of these hypergraph semirings $S_\mathbb{H}$ in varieties will relate to various colouring conditions of the hypergraph $\mathbb{H}$, provided that the varieties containing some of the critical objects we encountered in Section \ref{sec:SW}.

\begin{lem}\label{lem:HinS7}
If $\mathbb{H}$ is ${\leq 2}$-robustly $(k-1)$-in-$k$ satisfiable, then $S_\mathbb{H}\in \mathsf{V}(S_7)\wedge \mathsf{V}(S_c(ab^{k-1}))$.
\end{lem}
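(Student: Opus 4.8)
The plan is to establish the two memberships $S_\mathbb{H}\in\mathsf{V}(S_7)$ and $S_\mathbb{H}\in\mathsf{V}(S_c(ab^{k-1}))$ separately, since being in both is exactly being in $\mathsf{V}(S_7)\wedge\mathsf{V}(S_c(ab^{k-1}))$. Both arguments have the same shape, so I will carry out the one for $S_7$ and then indicate the obvious substitution for $S_c(ab^{k-1})$. Let $X$ be the set of all $(k-1)$-in-$k$ satisfying assignments $\gamma\colon V\to\{0,1\}$ of $\mathbb{H}$, and for $\gamma\in X$ write $Z_\gamma=\gamma^{-1}(0)$, so $|Z_\gamma\cap e|=1$ for each hyperedge $e$. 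Define $\mathsf{a}_v\in S_7^X$ by $(\mathsf{a}_v)_\gamma=a$ if $\gamma(v)=0$ and $(\mathsf{a}_v)_\gamma=1$ if $\gamma(v)=1$, put $S=\langle\,\mathsf{a}_v:v\in V\,\rangle\le S_7^X$, and let $J$ be the set of elements of $S$ that have a $0$-coordinate. Because $0$ is a multiplicative zero and the top of the $+$-order, $J$ is simultaneously a multiplicative ideal and a $+$-filter of $S$, so the ideal quotient $S/J$ is defined; being a quotient of a subalgebra of a power of $S_7$, it lies in $\mathsf{V}(S_7)$.

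The first step is to check that $S/J$ is a flat semiring and that $\ba_v\mapsto[\mathsf{a}_v]$ respects the presenting relations (1)--(4) of $S_\mathbb{H}$, hence extends to a homomorphism $\psi\colon S_\mathbb{H}\to S/J$. For a set $T\subseteq V$ one computes that $\bigl(\prod_{v\in T}\mathsf{a}_v\bigr)_\gamma=a^{|T\cap Z_\gamma|}$, which equals $0$ exactly when $|T\cap Z_\gamma|\ge 2$. Hence $\prod_{v\in T}\mathsf{a}_v\in J$ iff some $(k-1)$-in-$k$ satisfaction puts two $0$'s inside $T$, equivalently (by Lemma~\ref{lem:hyperprop2}(III)) iff $T$ contains two vertices lying in no common hyperedge. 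If $T$ is a subhyperedge this never happens, so all such products avoid $J$; if $T$ is \emph{not} a subhyperedge then ${\le}2$-robust $(k-1)$-in-$k$ satisfiability is precisely what produces the needed $\gamma$, forcing the product into $J$ --- this is exactly Rule~(1), including the case $u=v$. Rule~(2) is immediate. For Rule~(3): if $e$ is a hyperedge then $\prod_{v\in e}\mathsf{a}_v$ has value $a$ in \emph{every} coordinate (one $0$-vertex of $e$ per $\gamma$), so all hyperedge products coincide with the single element $\mathsf{a}=(a,a,\dots,a)$, which moreover lies outside $J$. For Rule~(4): if $(k-1)$-subhyperedges are linked through a vertex $w$, then on coordinates with $\gamma(w)=0$ both products equal $1$ and on coordinates with $\gamma(w)=1$ both equal $a$, so they are literally equal already in $S$. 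Finally $S/J$ is flat because the sum of two distinct non-$J$ elements acquires a $0$-coordinate wherever the two differ (there $S_7$-addition returns $0$), hence lands in $J$; the flat-semiring identities of Lemma~\ref{lem:flatvariety} therefore hold in $S/J$. This yields $\psi$.

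It remains to see that $\psi$ is injective, for then $S_\mathbb{H}\in\mathsf{S}(S/J)\subseteq\mathsf{V}(S_7)$. Here we use that $S_\mathbb{H}$, being flat, is subdirectly irreducible (Lemma~\ref{lem:flatvariety}); a short analysis of $\Con S_\mathbb{H}$ --- exploiting that $\ba=\ba_{u_1}\cdots\ba_{u_k}$ annihilates $S_\mathbb{H}$ (so $\{(\ba,0)\}$ generates a congruence), while every other non-zero element $\ba_T$ multiplies up to $\ba$ (extend the subhyperedge $T$ to a hyperedge), forcing every non-trivial congruence to collapse $\ba$ with $0$ --- identifies the monolith of $S_\mathbb{H}$ as the congruence that collapses $\ba$ and $0$. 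Since $\psi(0)=[J]$ while $\psi(\ba)=[\mathsf{a}]\ne[J]$ (as $\mathsf{a}\notin J$), the kernel of $\psi$ cannot contain the monolith, so $\psi$ is injective. Running the same construction with $S_c(ab^{k-1})$ in place of $S_7$, using the images $b$ for $\gamma(v)=1$ and $a$ for $\gamma(v)=0$ (the inequality $b^i\ne 0$ for $i<k$ is exactly the room needed to keep subhyperedge products out of $J$, and every hyperedge product becomes the constant top $ab^{k-1}$), gives $S_\mathbb{H}\in\mathsf{V}(S_c(ab^{k-1}))$, and the lemma follows.

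The conceptual crux --- and the part that makes the construction go --- is the realisation that restricting the power to the index set of $(k-1)$-in-$k$ satisfactions is precisely what turns every hyperedge product into one and the same constant tuple $\mathsf{a}$, so that the ideal quotient is available and $\ba$ has a well-defined image. I expect the genuine work to lie in the careful bookkeeping of the second paragraph (verifying that all of (1)--(4) together with the flat-semiring identities really survive passage to $S/J$, with ${\le}2$-robustness entering exactly at the non-subhyperedge products) and in the congruence-lattice computation pinning down the monolith of $S_\mathbb{H}$; neither is deep, but both need Lemma~\ref{lem:hyperprop2} and some attention to small cases.
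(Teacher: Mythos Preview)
Your proof is correct and builds the same object as the paper does---the subsemiring of $S_7^X$ (respectively $S_c(ab^{k-1})^X$) indexed by the set $X$ of $(k-1)$-in-$k$ satisfactions, followed by the ideal quotient by the zero-coordinate filter $J$---but the injectivity step is handled by a genuinely different and cleaner argument.

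The paper shows $S/J\cong S_\mathbb{H}$ by an explicit case analysis: for distinct subhyperedges $\{u_1,\dots,u_i\}$ and $\{v_1,\dots,v_j\}$ (other than full hyperedges or linked $(k-1)$-sets) it manufactures a particular $(k-1)$-in-$k$ satisfaction on which the two products take different values in $S$.  This entails several subcases according to how the ambient hyperedges meet, and in the disjoint case the paper invokes the absence of $4$-cycles, so girth at least $5$ is used.  Your route sidesteps all of this: once the defining relations (1)--(4) and flatness are verified for $S/J$, you identify the monolith of $S_\mathbb{H}$ as the congruence collapsing $\ba$ with $0$ (any nontrivial congruence on a flat semiring collapses some nonzero element with $0$, and then multiplying up along a hyperedge forces $(\ba,0)$), and observe that $\psi(\ba)=[\mathsf{a}]\neq[J]=\psi(0)$ because the constant tuple $\mathsf{a}$ has no zero coordinate.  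The kernel of $\psi$ therefore misses the monolith and must be trivial.  This is shorter, more conceptual, and needs only girth $\geq 4$ (the standing hypothesis for the construction $S_\mathbb{H}$).  What the paper's approach buys in exchange is an explicit description of the elements of $S/J$, effectively reproving Lemma~\ref{lem:clarification} in passing.

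One small point: your appeal to Lemma~\ref{lem:flatvariety} for the subdirect irreducibility of $S_\mathbb{H}$ is not quite what that lemma supplies, and in fact not every flat semiring is subdirectly irreducible (a $0$-direct join of two nontrivial flat semirings is flat but has two minimal congruences).  This does not matter, because your own monolith computation already proves that $S_\mathbb{H}$ is subdirectly irreducible; the citation is simply redundant.
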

\begin{proof}
We let $S$ denote either of $S_7$ or $S_c(ab^{k-1})$.  At many steps of the proof it will make no difference as to which is chosen, but when there is a difference, we give details for the $S_7$ case, with the difference required for $S_c(ab^{k-1})$ given in brackets.  In the case of $S=S_7$, we also invite the reader to consider the situation where the monoid construction $M_\mathbb{H}$ is used in place of $S_\mathbb{H}$: the arguments below carry through with trivial amendment.

Let  $T$ denote the set of all valid $(k-1)$-in-$k$ satisfactions of the vertices of~$\mathbb{H}$.  For each vertex $u$, define a tuple $a_u$ in $S^T$ by
\[
a_u(\phi)=\begin{cases}
a &\text{ if $\phi(u)=0$}\\
1 &\text{  if $\phi(u)=1$.}
\end{cases}
\]
(When $S=S_c(ab^{k-1})$, the value $a_u(\phi)=1$ is to be replaced by $a_u(\phi)=b$.)
Let~$A$ denote the subsemiring of $S^T$ generated by the elements of the form $a_u$.  We claim that $S_\mathbb{H}$ can be obtained from $A$ by factoring out the ideal $J$ generated by all elements of $A$ having a coordinate equal to $0$.  This is trivially an upset of the join semilattice structure on $A$, and an ideal of the multiplicative structure of $A$, so is a well-defined quotient.  We now wish to show that this quotient is isomorphic to $S_\mathbb{H}$, which will show that $S_\mathbb{H}\in\mathsf{V}(S)$.  The approach is as follows.  We first show that each of the defining equations (1)--(4)  of $S_\mathbb{H}$ holds, with $a_u$ replacing $\ba_u$, and that $A/J$ is a flat semilattice.   This shows that $A/J$ is a quotient of $S_\mathbb{H}$.  To show that it is isomorphic, we show that all products of the generators $a_u$ that are not obviously equivalent due to (1)--(4) are distinct in $A/J$.

We begin by checking the  defining laws (1)--(4) of $S_\mathbb{H}$ hold, with $a_u$ replacing $\ba_u$, and noting that we do have the required generators: a $0$ element (namely $J$) and an element $a_u$ for each vertex $u$.
If $u=v$ or $\{u,v\}$ does not extend to a hyperedge, then by ${\leq 2}$-robust $(k-1)$-in-$k$ satisfiability, there is a $(k-1)$-in-$k$ satisfaction that gives both $u$ and $v$ the value $0$.  Then $a_ua_v(\phi)=aa=0$ so that $a_ua_v\in J$ showing that (1) holds.  The commutativity property (2) holds trivially, based on the commutativity of $S$.  For item (3), consider any hyperedge $\{u_1,\dots,u_k\}$ and note that for any $(k-1)$-in-$k$ satisfaction $\phi$ we have that $\phi$ assigns exactly one of $u_1,\dots,u_k$ the value $0$, and the remainder are given the value $1$.
Up to commutativity, this yields $a_{u_1}\dots a_{u_k}(\phi)=a\overbrace{11\dots 1}^{k-1}=a$ in the case of $S=S_7$ and $a\overbrace{bb\dots b}^{k-1}=ab^{k-1}$ in the case of $S=S_c(ab^{k-1})$.  Thus $a_{u_1}\dots a_{u_k}$ is the constant tuple $\underline{a}$, showing that (3) holds.
(In the case of $S=S_c(ab^{k-1})$ it is the constant tuple $\underline{ab^{k-1}}$.)
Finally, consider when $\{u_1,\dots,u_{k-1}\}$ and $\{v_1,\dots,v_{k-1}\}$ are linked by way of some vertex~$w$.
For any $(k-1)$-in-$k$ satisfaction $\phi$, if $\phi(w)=0$ then $\phi(u_1)=\dots=\phi(u_{k-1})=\phi(v_1)=\dots=\phi(v_{k-1})=1$ so that $a_{u_1}\dots a_{u_{k-1}}(\phi)=a_{v_1}\dots a_{v_{k-1}}(\phi)=1$ (or $b^{k-1}$ in the case of $S=S_c(ab^{k-1})$).
Alternatively, if $\phi(w)=1$, then precisely one of $u_1,\dots,u_{k-1}$ and one of $v_1,\dots,v_{k-1}$ is $0$ and the other are $1$; and this case $a_{u_1}\dots a_{u_{k-1}}(\phi)=a_{v_1}\dots a_{v_{k-1}}(\phi)=a$ (or $ab^{k-2}$ in the case of $S_c(ab^{k-1})$).
Either way, we have $a_{u_1}\dots a_{u_{k-1}}(\phi)=a_{v_1}\dots a_{v_{k-1}}(\phi)$ for all $\phi$ so that equality (4) holds.

The last remaining property of $S_\mathbb{H}$ that needs checked for $A/J$ is that it is a flat semilattice.  This is immediate, given that $S$ is a flat semiring and each pair of distinct elements differ in some coordinate.  The sum of two such elements will then take the value $0$ on that coordinate and hence lie in $J$.

In order to complete the proof that $A/J\cong S_\mathbb{H}$ we must show that no further equalities hold in $A/J$ beyond those that are forced by the equalities (1)--(4).  At this point we still do not know if there are some subtle equalities that are consequences of (1)--(4), but in the course of the proof we will discover that only the obvious ones hold in $A/J$, which will imply that $A/J\cong S_\mathbb{H}$.

%\marginpar{Up to here: read this}
The elements in $A/J$ can all be written as either the set $J$ (a multiplicative $0$) or as a product of the form $a_{u_1}\dots a_{u_\ell}$ for some $\ell$.  Thus, by the ${\leq 2}$-robust $(k-1)$-in-$k$ satisfiability property, all elements of $A$ have a coordinate equal to either $a$ or $0$.  We can deduce that if the product $a_{u_1}\dots a_{u_\ell}$ in $A$ is not an element of $J$, then it contains no repeats, as we noted commutativity and the property $a_ua_u\in J$ already.   We may also assume that $\{u_1,\dots,u_\ell\}$ is a subhyperedge: by Lemma~\ref{lem:hyperprop2}(III) we know that otherwise there is a pair $u,v\in \{u_1,\dots,u_\ell\}$ such that $\{u,v\}$ is not a subhyperedge, and hence the known equalities (1) and (2) show that the product lies in $J$.  This also implies that $\ell\leq k$, as no set of size more than $k$ can extend to a $k$-element hyperedge.

Now let $\{u_1,\dots,u_{i}\}$ and $\{v_1,\dots,v_{j}\}$ be distinct subhyperedges of size $i$ and~$j$ respectively.
Our goal is to show that if $a_{u_1}\dots a_{u_i}=a_{v_1}\dots a_{v_j}$ then either $i=j=k$ or $i=j=k-1$ and $\{u_1,\dots,u_{i}\}$ and $\{v_1,\dots,v_{j}\}$ are linked.
We prove the contrapositive.
First, if $i=k$ and $j<k$ then we know that $a_{u_1}\dots a_{u_i}$ is the constant tuple $\underline{a}$ (or the constant tuple $\underline{ab^{k-1}}$ when $S=S_c(ab^{k-1})$).  But there is a hyperedge $\{v_1,\dots,v_k\}$ extending $\{v_1,\dots,v_{j}\}$ and we may find a $(k-1)$-in-$k$ satisfaction $\phi$ of $\mathbb{H}$ that gives $v_k$ the value $0$.
Then $a_{v_1}\dots a_{v_j}(\phi)=1$ so that  $a_{u_1}\dots a_{u_i}(\phi)=a\neq 1=a_{v_1}\dots a_{v_j}(\phi)$.
(In the case of $S=S_c(ab^{k-1})$ we have $a_{v_1}\dots a_{v_j}(\phi)=b^j$, while $a_{v_1}\dots a_{v_j}(\phi)=ab^{j-1}$.)
So now assume that both $i,j\leq k-1$ and that $\{u_1,\dots,u_{i}\}$ is not linked to $\{v_1,\dots,v_{j}\}$.
We now find a $(k-1)$-in-$k$ satisfaction~$\phi$ such that one of $\{\phi(u_1),\dots,\phi(u_{i})\}$ and $\{\phi(v_1),\dots,\phi(v_{j})\}$ is $\{1\}$, and the other contains $0$, which will complete the proof as then one of $a_{u_1}\dots a_{u_i}(\phi)$ and $a_{v_1}\dots a_{v_j}(\phi)$ is $1$
(or a power of $b$ in the case of $S=S_c(ab^{k-1})$) and the other is~$a$ (or contains an $a$).

For convenience, let $u_{i+1},\dots,u_k$ be such that $\{u_1,\dots,u_k\}$ is a hyperedge and $v_{j+1},\dots,v_k$ be such that $\{v_1,\dots,v_k\}$ is a hyperedge.  Without loss of generality, we may assume that $i\geq j$ and that $u_1\notin \{v_1,\dots,v_j\}$.
It's possible that $\{u_1,\dots,u_k\}=\{v_1,\dots,v_k\}$ but then we may use ${\leq 2}$-robustness and choose $\phi$ to colour $u_1$ the value $0$, which will force all remaining vertices in $\{u_1,\dots,u_k\}=\{v_1,\dots,v_k\}$ to be coloured $1$, which shows that $\{\phi(u_1),\dots,\phi(u_{i})\}=\{0,1\}\neq \{1\}=\{\phi(v_1),\dots,\phi(v_{j})\}$ as required.
So we assume that $\{u_1,\dots,u_k\}\neq \{v_1,\dots,v_k\}$, in which case $\{u_1,\dots,u_k\}$ and $\{v_1,\dots,v_k\}$  share at most one common vertex.
If they are disjoint, then at most one of $\{u_1,v_k\}$ and $\{v_1,u_k\}$ is a subyperedge: otherwise we get a $4$-cycle in $\mathbb{H}$.
There is no loss of generality in assume that $\{u_1,v_k\}$ is not a subhyperedge, in which case we can choose $\phi$ to colour both $u_1$ and $v_k$ the value~$0$.
Then $\{\phi(u_1),\dots,\phi(u_{i})\}=\{0,1\}\neq \{1\}=\{\phi(v_1),\dots,\phi(v_{j})\}$ as required.  Now consider the case where $\{u_1,\dots,u_k\}$ and $\{v_1,\dots,v_k\}$ share (exactly) one vertex $u$.
If $u$ is not $u_k$ then $\{u_k,v_1\}$ is not a subhyperedge (as otherwise we have a $3$-cycle in $\mathbb{H}$), and so we can choose a $(k-1)$-in-$k$ satisfaction $\phi$ giving both $u_k$ and $v_1$ the colour $0$, so that $\{\phi(u_1),\dots,\phi(u_{i})\}=\{1\}\neq \{0,1\}$, while $0\in\{\phi(v_1),\dots,\phi(v_{j})\}$.
If $u=u_k$ and $v_k\neq u_k$ then $\{u_1,v_k\}$ is not a hyperedge and we can choose a $(k-1)$-in-$k$ satisfaction $\phi$ giving both $u_1$ and $v_k$ the colour $0$ so that $0\in\{\phi(u_1),\dots,\phi(u_{i})\}$ while $\{\phi(v_1),\dots,\phi(v_{j})\}=\{1\}$.
If $u=u_k=v_k$, then the assumption that $i\geq j$ and $\{u_1,\dots,u_{i}\}$ and $\{v_1,\dots,v_{j}\}$ are not linked $(k-1)$-sets ensures $j<k-1$.  Then use the fact that $\{u_1,v_{k-1}\}$ is not a subhyperedge, to again find the required $(k-1)$-in-$k$ satisfaction.
\end{proof}

Recall that $S_c(a_1\dots a_k)=S_\mathbb{E}$, where $\mathbb{E}$ denotes the $k$-uniform hypergraph with a single hyperedge.  A $k$-uniform hyperforest $\mathbb{F}$ that is nontrivial contains at least one hyperedge, so that $\mathsf{V}(S_\mathbb{F})$ contains $S_\mathbb{E}$.  The next lemma shows the converse, from which it follows that all $k$-uniform hyperforest semirings generate the same variety.
\begin{lem}\label{lem:forest2}
If $k>2$ and $\mathbb{F}$ is a $k$-uniform hyperforest, then
$S_\mathbb{F}\in\mathsf{V}(S_c(a_1\dots a_k))$.
\end{lem}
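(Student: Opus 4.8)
The plan is to mimic the proof of Lemma~\ref{lem:HinS7}, but to index the relevant power of $S:=S_c(a_1\dots a_k)=S_\mathbb{E}$ by the set $\Phi$ of \emph{rainbow $k$-colourings} of $\mathbb{F}$: the maps $\phi\colon V(\mathbb{F})\to\{1,\dots,k\}$ whose restriction to every hyperedge is a bijection onto $\{1,\dots,k\}$ (equivalently, the homomorphisms from $\mathbb{F}$ to the single-hyperedge $k$-hypergraph structure $\mathbb{E}$). For each vertex $u$ I put $a_u\in S^\Phi$ with $a_u(\phi):=a_{\phi(u)}$, let $A$ be the subsemiring of $S^\Phi$ generated by the $a_u$, and let $J$ be the ideal of $A$ generated by the tuples having some coordinate equal to $0$; as in Lemma~\ref{lem:HinS7} this $J$ is a multiplicative ideal and an order-theoretic upset for $+$, so the ideal quotient $A/J$ is well defined, and the goal is to show $A/J\cong S_\mathbb{F}$, which gives $S_\mathbb{F}\in\mathsf{V}(S)$. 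The auxiliary combinatorial fact I need in place of Lemma~\ref{lem:forest} is that \emph{a $k$-uniform hyperforest is ${\le}2$-robustly rainbow $k$-colourable}, i.e.\ any partial map from at most two vertices into $\{1,\dots,k\}$ that is injective on any pair lying in a common hyperedge extends to a member of $\Phi$; this is proved by the same leaf-removal induction as Lemma~\ref{lem:forest}, the only extra point being that when a prescribed colour must be placed on a private vertex of a leaf hyperedge one first recolours the (unique) contact vertex to dodge the at most two forbidden colours, which is possible precisely because $k\ge 3$. In particular $\Phi\ne\emptyset$.

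First I would verify that the defining equalities (1)--(4) of the hypergraph semiring construction hold in $A/J$ with $a_u$ in place of $\ba_u$, and that $A/J$ is a flat semilattice; exactly as in Lemma~\ref{lem:HinS7} this shows that $A/J$ is a quotient of $S_\mathbb{F}$. Equality~(2) is inherited from commutativity of $S$. For~(3), on each $\phi\in\Phi$ a hyperedge $\{u_1,\dots,u_k\}$ yields $a_{u_1}\cdots a_{u_k}(\phi)=a_{\phi(u_1)}\cdots a_{\phi(u_k)}=a_1\cdots a_k$, so $a_{u_1}\cdots a_{u_k}$ is the constant tuple with value $a_1\cdots a_k$, the same for every hyperedge. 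For~(4), if $\{u_1,\dots,u_{k-1}\}$ and $\{v_1,\dots,v_{k-1}\}$ are linked through a vertex $w$, then on each $\phi$ both products equal $\prod_{c\ne\phi(w)}a_c$, since $\phi$ is a bijection on each of the hyperedges $\{u_1,\dots,u_{k-1},w\}$ and $\{v_1,\dots,v_{k-1},w\}$. For~(1), $a_ua_u$ is the constant-$0$ tuple because $a_{\phi(u)}^2=0$ in $S$, while if $\{u,v\}$ is not a subhyperedge then ${\le}2$-robust rainbow colourability produces $\phi\in\Phi$ with $\phi(u)=\phi(v)$, so $a_ua_v(\phi)=0$ and $a_ua_v\in J$. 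Finally $A/J$ is a flat semilattice because $S$ is flat and two distinct tuples in $A$ differ in some coordinate, so their coordinatewise sum acquires a $0$ there and lies in $J$.

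The substantive step, as in Lemma~\ref{lem:HinS7}, is to show no further collapse occurs, i.e.\ that the surjection $S_\mathbb{F}\to A/J$ is injective. Using Lemma~\ref{lem:hyperprop2}(III) together with (1)--(2), one sees that a product $a_{u_1}\cdots a_{u_\ell}$ lies in $J$ unless $u_1,\dots,u_\ell$ are distinct and $\{u_1,\dots,u_\ell\}$ is a subhyperedge (forcing $\ell\le k$), and in that case its value on each $\phi$ is the nonzero element $\prod_{c\in\phi(\{u_1,\dots,u_\ell\})}a_c$ of $S$, so it is outside $J$. Thus, for distinct subhyperedges $U$ and $V$, their products are identified in $A/J$ iff $\phi(U)=\phi(V)$ for all $\phi\in\Phi$; comparing cardinalities forces $|U|=|V|=:\ell$. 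If $\ell=k$ then $\phi(U)=\{1,\dots,k\}=\phi(V)$ for every $\phi$, consistent with~(3). If $\ell=k-1$, then by Lemma~\ref{lem:hyperprop2}(I) each such $U$ lies in a unique hyperedge, with extra vertex $w_U$, and $\phi(U)=\{1,\dots,k\}\setminus\{\phi(w_U)\}$; so $\phi(U)=\phi(V)$ for all $\phi$ iff $\phi(w_U)=\phi(w_V)$ for all $\phi$, and (splitting according to whether $\{w_U,w_V\}$ is a subhyperedge, and using robustness) this holds iff $w_U=w_V$, i.e.\ iff $U$ and $V$ are linked. If $\ell<k-1$, pick $u^*\in U\setminus V$ and a hyperedge $e\supseteq V$: either $V\cup\{u^*\}$ is a subhyperedge, so every $\phi$ is injective on it and $\phi(u^*)\notin\phi(V)$, or else a short-cycle-free argument in the spirit of the final paragraph of the proof of Lemma~\ref{lem:HinS7} (appealing to Lemma~\ref{lem:hyperprop2}(I) to exclude $3$-cycles) yields a vertex $v'\in e\setminus V$ with $\{u^*,v'\}$ not a subhyperedge, whence a $\phi\in\Phi$ with $\phi(u^*)=\phi(v')$ satisfies $\phi(u^*)\notin\phi(V)$; either way $\phi(U)\ne\phi(V)$. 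Hence the only identifications in $A/J$ are those already imposed by (1)--(4), so $A/J\cong S_\mathbb{F}$ and $S_\mathbb{F}\in\mathsf{V}(S_c(a_1\dots a_k))$. I expect the main obstacle to be this last case ($\ell<k-1$): one must combine the new robustness lemma with the sparseness of the hyperforest to keep distinct small subhyperedges separated, and (as in Lemma~\ref{lem:HinS7}) the bookkeeping of which $(k-1)$-element sets are linked needs some care.
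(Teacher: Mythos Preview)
Your proof is correct and follows essentially the same approach as the paper.  Both arguments embed $S_\mathbb{F}$ as an ideal quotient $A/J$ of a subsemiring of $S_c(a_1\dots a_k)^\Phi$, where the index set $\Phi$ consists of homomorphisms $\mathbb{F}\to\mathbb{E}$ (rainbow $k$-colourings), and both rely on a ${\le}2$-robustness property of these colourings to verify that no unexpected identifications occur in $A/J$.

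The differences are presentational rather than substantive.  The paper invokes an external result (\cite[Lemma~2.8]{hamjac:hyper}) to get $\mathbb{F}$ as an induced substructure of $\mathbb{E}^T$ and then sketches how to enlarge $T$ to achieve robustness, while you state the robustness property directly (extension of any valid partial $2$-vertex rainbow colouring) and outline a self-contained leaf-removal induction; your observation that $k\ge 3$ is exactly what allows the contact vertex to avoid up to two forbidden colours is the key point.  The paper then defers the injectivity argument entirely to ``an almost identical process to the final stages of the proof of Lemma~\ref{lem:HinS7}'', whereas you give the details, including a clean treatment of the $\ell<k-1$ case via the $3$-cycle obstruction (at most one vertex of $e\setminus V$ can share a hyperedge with $u^*$, and $|e\setminus V|\ge 2$).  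Your version of robustness is nominally stronger than the paper's (arbitrary prescribed colours rather than merely existence of one agreement and one disagreement), but both are proved by the same induction and either suffices for the application.
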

\begin{proof}
It can be shown by induction \cite[Lemma 2.8]{hamjac:hyper} that $\mathbb{F}$ is an induced sub-hypergraph of a direct power $\mathbb{E}^T$ of $\mathbb{E}$, for some index set $T$ that is finite in the case that $\mathbb{F}$ is finite.  Thus each vertex $u$ of $\mathbb{F}$ may be associated with an $T$-tuple $\bar{u}$ of vertices of $\mathbb{E}$, in such a way that $\{u_1,\dots,u_k\}$ is a hyperedge of $\mathbb{F}$ if and only if for all $i\in T$ the tuple $\{\bar{u}_1(i),\dots,\bar{u}_k(i)\}$ is a hyperedge of $\mathbb{E}$.  (Of course, there is just one hyperedge in $\mathbb{E}$, so this means $\{\bar{u}_1(i),\dots,\bar{u}_k(i)\}=\{a_1,\dots,a_k\}$.)  In fact we may assume slightly more than this, as the representation $\mathbb{F}\leq \mathbb{E}^T$ can be shown to be ${\leq}2$-robust, in the sense that if $u$ and $v$ are distinct vertices that do not lie in the same hyperedge, then we may find $i,j\in T$ such that $\bar{u}(i)=\bar{v}(i)$ and $\bar{u}(j)\neq \bar{v}(j)$.  This is not directly shown in the proof of \cite[Lemma 2.8]{hamjac:hyper}, but it is easy to verify as we now sketch.  For every such $u,v$, we wish to find maps $\phi_1,\phi_2$ from $\mathbb{F}$ to $\mathbb{E}$ with $\phi_1(u)=\phi_1(v)$ and $\phi_2(u)=\phi_2(v)$; then we may include $\phi_1,\phi_2$ as coordinates in $T$, with $\bar{w}(\phi_i)=\phi_i(w)$ for all vertices $w$, just as in Lemma \ref{lem:HinS7}.
The proof of \cite[Lemma 2.8]{hamjac:hyper} is inductive, using the fact that every hyperforest contains a leaf.  Specifically we may find a series $\mathbb{E}\cong \mathbb{F}_0\leq \mathbb{F}_1\leq\dots\leq \mathbb{F}_n=\mathbb{F}$ of hyperforests, each obtained from its predecessor by adjoining a new leaf.  The ${\leq}2$-robustness condition holds at the base case, and if it holds in $\mathbb{F}_i$, then it is very straightforward to show it holds in $\mathbb{F}_{i+1}$ as there is almost complete freedom for how we extend our desired maps $\phi_1$ and $\phi_2$ to the new vertices added in the introduction of the new leaf $e$.  The only constraint is that our bijection from $e$ to $\{a_1,\dots,a_k\}$ has to agree on the (at most one) vertex common to $e$ and $\mathbb{F}_i$, if such a vertex exists.  The details are almost identical to that in \cite[Lemma 2.8]{hamjac:hyper} and we omit them.

The remaining arguments are very similar to those in the proof of Lemma \ref{lem:HinS7} so we again give only sketch  details.  We let $A$ be the subsemiring of $\mathbb{E}^T$ generated by the elements $\bar{u}$ for vertices $u$ in $\mathbb{F}$, and let $J$ denote the set of elements in $A$ with a zero coordinate, which is an additive and multiplicative ideal, so that we have a well defined quotient $A/J$.  We claim that $A/J\cong S_\mathbb{F}$.  Properties (1)--(4) for $S_\mathbb{F}$ are verified of $A/J$ immediately due to the fact that $A/J$ is commutative, and the ${\leq}2$-robustness property of the representation of $\mathbb{F}$ as an induced substructure of $\mathbb{E}^T$:  property (4), on linked subhyperedges, is because if $\{u_1,\dots,u_{k-1}\}$ and $\{v_1,\dots,v_{k-1}\}$ are linked via $w
$, and $\bar{w}(i)=a_\ell$, then $\bar{u}_1\dots\bar{u}_{k-1}(i)=\prod_{j\neq \ell}a_j=\bar{v}_1\dots\bar{v}_{k-1}(i)$.  Thus $A/J$ is a homomorphic image of $S_\mathbb{F}$.  The fact that it is isomorphic follows by an almost identical process to the final stages of the proof of Lemma \ref{lem:HinS7}, using the ${\leq}2$-robustness of the representation of $\mathbb{F}$ as an induced substructure of $\mathbb{E}^T$.  We omit the details.
\end{proof}
\begin{remark}\label{rem:monoidforest}
Lemma \ref{lem:forest2} holds also if the monoids $M_\mathbb{F}$  and $M_c(a_1\dots a_k)$ are used in place of $S_\mathbb{F}$ and $S_c(a_1\dots a_k)$, with trivial amendment to the proof.
\end{remark}

In the following results, a \emph{cyclic element} will mean an element $g$ satisfying $g^n=g$ for some
$n$, which in the periodic case is equivalent to lying a subgroup of the multiplicative reduct of a semiring.
\begin{defn}\label{def:1in3}
We say that the \emph{$1$-in-$3$ property} holds in a semiring $S$ at a point $c\in S$ if $c$ is noncyclic and there is an element $d$ such that  whenever $x,y,z\in S$ have all permutations of the product $xyz$ below $c$ in the $+$-semilattice order, then precisely one of $x,y,z$ equals $d$. The $2$-in-$3$ property is defined similarly but with precisely two of $x,y,z$ equal to $d$.   We say that the $1$-in-$3$ (respectively, the $2$-in-$3$) property holds in $S$ if it holds at every noncyclic element of $S$.
\end{defn}
The 1-in-3 property holds in $S_7$ for the unique noncyclic element $c:=a$ via the element $d:=a$, while the $2$-in-$3$ property holds using $d=1$.  The $2$-in-$3$ property holds in $S_c(abb)$ at the element $c:=abb$ via $d:=b$, while the $1$-in-$3$ property holds when $d:=a$.  The $1$-in-$3$ and $2$-in-$3$ properties hold vacuously at all other choices of $c$ in $S_c(abb)$ as all noncyclic elements admit no $x,y,z$ with $xyz=c$.  The $1$-in-$3$ property also holds in $B_2^1$: the noncyclic elements are $a$ and $b$, and up to symmetry we may consider $xyz,xzy,yxz,yzx,zxy,zyx\leq a$, from which it easily follows that $(x,y,z)\in\{(a,1,1),(1,a,1),(1,1,a)\}$.  Moreover the same holds more generally in $B_2^1(G)$ for any group $G$: as a semigroup this is just the Brandt semigroup over the group $G$, with adjoined identity.  As with $B_2$, the semigroup $B_2^1(G)$ becomes a flat semiring by way of the natural inverse semigroup order, and the added identity element in $B_2^1(G)$ will sit beneath the two nonzero idempotents of $B_2(G)$ as in Figure~\ref{fig:B21}.  All other elements are pairwise incomparable and are covered by $0$, as $a$ and $b$ are in Figure~\ref{fig:B21}.

\begin{lem}\label{lem:H23}
Let $\mathbb{H}$ be a $3$-uniform hypergraph of girth at least $5$, and let $S$ be an ai-semiring of finite $+$-height, finite period and satisfying the $1$-in-$3$ or $2$-in-$3$ property.  Assume further that the noncyclic elements form an order ideal in the $+$-order.
If $S_\mathbb{H}\in \mathsf{V}(S)$ then $\mathbb{H}$ is $2$-in-$3$-satisfiable.
\end{lem}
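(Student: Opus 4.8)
The plan is to read off a $2$-in-$3$ satisfaction of $\mathbb{H}$ from a single well-chosen coordinate of a representation of $S_\mathbb{H}$ inside a power of $S$. Since $S_\mathbb{H}\in\mathsf{V}(S)$ there is a surjective homomorphism $\psi\colon B\onto S_\mathbb{H}$, where $B$ is a subsemiring of a power $S^I$; we may assume $B$ is generated by elements $\bb_v$ $(v\in V)$ chosen with $\psi(\bb_v)=\ba_v$ (so $0$ is among the products of these). For each hyperedge $e=\{u_1,u_2,u_3\}$ I form the symmetrised product
\[
\bg_e:=\sum_{\pi}\bb_{u_{\pi(1)}}\bb_{u_{\pi(2)}}\bb_{u_{\pi(3)}}\in B,
\]
the sum running over the six permutations $\pi$ of $\{1,2,3\}$ --- summing over all orders is what allows the argument to tolerate a noncommutative $S$, as is needed for $S=B_2^1$ --- and then $\bh:=\sum_{e\in E}\bg_e\in B$ (assume $\mathbb{H}$ finite for the moment). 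By Rule~(2) each summand of $\bg_e$ maps under $\psi$ to $\ba_{u_1}\ba_{u_2}\ba_{u_3}$, which equals $\ba$ by Rule~(3); hence $\psi(\bg_e)=\ba$ and $\psi(\bh)=\ba$.

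Next I locate a suitable coordinate. A hyperedge has no repeated vertex, so Rule~(1) together with commutativity gives $\ba^2=0$, whence $\ba^n=0\neq\ba$ for all $n\ge 2$; here one uses $\ba\neq 0$ in $S_\mathbb{H}$, which is part of the non-degenerate structure of the defining presentation (cf.\ Lemma~\ref{lem:clarification}). Thus $\ba$ is noncyclic in $S_\mathbb{H}$, and since $\psi(\bh)=\ba$ the element $\bh$ must be noncyclic in $B$ as well (if $\bh^n=\bh$ then $\ba^n=\ba$). As $S$ has finite period there is a single exponent $1+p\ge 2$ with $g=g^{1+p}$ for every cyclic $g\in S$, so $\bh$ cannot be coordinatewise cyclic; fix $i\in I$ with $s:=\bh(i)$ noncyclic in $S$ (so $s\neq 0$, since $0$ is cyclic).

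Now I extract the colouring. For each hyperedge $e=\{u_1,u_2,u_3\}$ and each permutation $\pi$, evaluating the sums coordinatewise gives $\bb_{u_{\pi(1)}}(i)\bb_{u_{\pi(2)}}(i)\bb_{u_{\pi(3)}}(i)\le\bg_e(i)\le\bh(i)=s$ in the $+$-order of $S$; that is, every permutation of the product $\bb_{u_1}(i)\bb_{u_2}(i)\bb_{u_3}(i)$ lies below the noncyclic element $s$. Applying whichever of the two hypotheses $S$ satisfies, at the point $s$, there is an element $d\in S$ such that in every hyperedge exactly one (in the $1$-in-$3$ case), respectively exactly two (in the $2$-in-$3$ case), of $\bb_{u_1}(i),\bb_{u_2}(i),\bb_{u_3}(i)$ equal $d$. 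Define $\gamma\colon V\to\{0,1\}$ by $\gamma(u)=0\iff\bb_u(i)=d$ in the first case and $\gamma(u)=1\iff\bb_u(i)=d$ in the second. This is well defined since whether $\bb_u(i)=d$ depends only on $u$ once $i$ is fixed, every vertex lies on a hyperedge (there are no isolated vertices), and in either case each hyperedge of $\mathbb{H}$ receives exactly one vertex labelled $0$ --- which is exactly the assertion that $\gamma$ is a $2$-in-$3$ satisfaction of $\mathbb{H}$.

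The real content sits in two places. The first is ensuring that all six orderings of every hyperedge product land below one and the same noncyclic element $s$: this is why $\bh$ is taken as the full symmetrised sum, and once that is in place the $1$-in-$3$/$2$-in-$3$ hypothesis does all of the combinatorial forcing on its own (it already rules out, for instance, the three factors being equal, since then none or all of them would equal $d$). The second is verifying $\ba\neq 0$ in $S_\mathbb{H}$, without which $\ba=0$ would be cyclic and the coordinate-selection step would collapse; this is where the girth hypothesis enters, through Lemma~\ref{lem:hyperprop2} and the structural description of the presentation (girth $\ge 4$ already suffices here). The remaining hypotheses --- that $S$ has finite $+$-height and that its noncyclic elements form an order ideal --- are mild and hold trivially for $S_7$, $S_c(abb)$, $B_2^1$ and each $B_2^1(G)$; I expect their role to be organisational (finite $+$-height in making sense of $\bh$ when $\mathbb{H}$ is infinite, and otherwise one reduces the infinite case to finite subhypergraphs by the compactness theorem for the constraint problem of the template $\mathbbold{2}$, applied to the satisfactions constructed above), and I would pin down their precise use while writing out the details.
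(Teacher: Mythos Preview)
Your argument is correct and follows essentially the same strategy as the paper's proof: locate a coordinate where a preimage of $\ba$ is noncyclic, then read off a $2$-in-$3$ satisfaction by applying the $1$-in-$3$/$2$-in-$3$ property at that coordinate. The one technical difference is that the paper, instead of forming your explicit symmetrised sum $\bh$, takes $\bar{\ba}$ to be the \emph{maximum} element of $\psi^{-1}(\ba)$ (and similarly $\bar{\ba}_v$ for each $\ba_v$); this is precisely where the finite $+$-height hypothesis is invoked, and then the inequality $\bar{\ba}_{u}\bar{\ba}_{v}\bar{\ba}_{w}\le\bar{\ba}$ (in any order) follows simply because the left side lies in $\psi^{-1}(\ba)$. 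Your construction of $\bh$ achieves the same inequalities without needing a maximum, which is why you correctly observe that for finite $\mathbb{H}$ the finite-height assumption is not really needed; you are also right that the order-ideal hypothesis on noncyclic elements is not actually used in this particular lemma (it is needed later, in Theorem~\ref{thm:p3}).
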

\begin{proof}
It suffices to assume that all vertices of $\mathbb{H}$ lie within a hyperedge, as we may trivially extend any $2$-in-$3$-satisfaction on the non-isolated vertices to any isolated vertices.

 Assume $S_\mathbb{H}\in \mathsf{V}(S)$. So there is a substructure $A\leq S^P$ for some finite set $P$ and $A$ is a homomorphic to $S_\mathbb{H}$, under some homomorphism $\psi:A\to S_\mathbb{H}$.  For every $x\in S_\mathbb{H}$, let $\bar{x}$ denote the largest element of $\psi^{-1}(x)$, which exists because $S$ has finite height (in the $+$-order).  As $\ba^2=0$ there is at least one $p\in P$ such that $\bar{\ba}(p)$ is noncyclic in $S$; call this element $c$.  We now invoke the 1-in-3 property, though note the argument for 2-in-3 as we go.  Using the 1-in-3 property, there is an element $d$ such that whenever $x,y,z$ have all permutations of the product $xyz$ beneath $c$, then precisely one of $x,y,z$ is $d$.  We colour the vertices of $\mathbb{H}$ as follows: if $\bar{\ba}_v(p)=d$ then we colour $v$ by $0$, and otherwise we colour $v$ by $1$.  (In the case of the $2$-in-$3$ property, the role of $0$ and $1$ is switched.)  We claim this is a $2$-in-$3$ colouring.  Now, all elements $\ba_u$ lie within a hyperedge $\{u,v,w\}$ and $\bar{\ba}_u\bar{\ba}_v\bar{\ba}_w(p)\leq \bar{\ba}(p)=c$.  So the $1$-in-$3$ property ensures that exactly one of $u,v,w$ is coloured $0$, as required.  Thus $\mathbb{H}$ is $2$-in-$3$-colourable, as required.
 \end{proof}
 Again the reader will readily verify that this result and proof also holds if $S_\mathbb{H}$ is replaced by $M_\mathbb{H}$ so that the result holds in the setting of semirings with identity also.
 Now we may prove one of our main nonfinite axiomatisability results.
\begin{thm}\label{thm:hard}
Let $S$ be a periodic ai-semiring of finite $+$-height with the following properties\up:
\begin{enumerate}
\item The noncyclic elements are an order ideal in the $+$-order\up;
\item  The $1$-in-$3$ property holds on $S$ or the $2$-in-$3$ property holds on $S$\up;
\item Either $S_7$ or $S_c(abb)$ is contained in $\mathsf{V}(S)$.
\end{enumerate}
Then it is \texttt{NP}-hard to distinguish the finite algebras in the subvariety $\mathsf{V}(S_c(abb))\wedge \mathsf{V}(S_7)$ from those that are not in $\mathsf{V}(S)$.
\end{thm}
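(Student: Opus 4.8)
The goal is a polynomial-time reduction from the promise problem in Theorem~\ref{thm:facts}(1) (with $k=3$): given a $3$-uniform hypergraph $\mathbb{H}$ of girth at least $5$, decide between the YES case ($\mathbb{H}$ is ${\leq}2$-robustly $2$-in-$3$ satisfiable) and the NO case ($\mathbb{H}$ is not $2$-in-$3$ satisfiable). The reduction is: on input $\mathbb{H}$, output the hypergraph semiring $S_\mathbb{H}$. This is clearly computable in polynomial time, since the size of $S_\mathbb{H}$ is bounded by a polynomial in $|V|$ (its nonzero elements are, by Lemma~\ref{lem:clarification}, indexed by subhyperedges of size at most $k=3$) and its operation tables are read off directly from the hyperedge set via Rules (1)--(4). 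So the only thing to verify is that this map sends YES instances into $\mathsf{V}(S_c(abb))\wedge\mathsf{V}(S_7)$ and NO instances outside $\mathsf{V}(S)$.

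\textbf{Correctness of the reduction.} For the YES case: if $\mathbb{H}$ is ${\leq}2$-robustly $2$-in-$3$ satisfiable, then (taking $k=3$, so $ab^{k-1}=abb$) Lemma~\ref{lem:HinS7} gives immediately that $S_\mathbb{H}\in\mathsf{V}(S_7)\wedge\mathsf{V}(S_c(abb))$, which is exactly the target subvariety. For the NO case: suppose for contradiction that $\mathbb{H}$ is not $2$-in-$3$ satisfiable but $S_\mathbb{H}\in\mathsf{V}(S)$. By hypothesis, $S$ is a periodic ai-semiring of finite $+$-height (hence finite period, as periodic plus the semilattice structure forces this) whose noncyclic elements form an order ideal, and satisfies the $1$-in-$3$ or $2$-in-$3$ property, and $\mathbb{H}$ has girth at least $5\ge 5$. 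Thus Lemma~\ref{lem:H23} applies and yields that $\mathbb{H}$ \emph{is} $2$-in-$3$ satisfiable --- a contradiction. Hence in the NO case $S_\mathbb{H}\notin\mathsf{V}(S)$.

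\textbf{Conclusion.} Combining the two cases: the map $\mathbb{H}\mapsto S_\mathbb{H}$ is a polynomial-time reduction carrying YES instances into $\mathsf{V}(S_c(abb))\wedge\mathsf{V}(S_7)$ and NO instances into the complement of $\mathsf{V}(S)$. Since distinguishing YES from NO instances is \texttt{NP}-hard by Theorem~\ref{thm:facts}(1), it follows that distinguishing finite algebras in $\mathsf{V}(S_c(abb))\wedge\mathsf{V}(S_7)$ from those not in $\mathsf{V}(S)$ is \texttt{NP}-hard, as claimed. (Note the two hypotheses (2) and (3) of the theorem interact cleanly: hypothesis~(3) guarantees that $\mathsf{V}(S_c(abb))\wedge\mathsf{V}(S_7)\subseteq\mathsf{V}(S)$ is nontrivial as a promise gap, while hypotheses (1)--(2) are precisely what Lemma~\ref{lem:H23} needs; no further case analysis on which of $S_7$, $S_c(abb)$ lies in $\mathsf{V}(S)$ is required, since the YES image lands in the intersection variety regardless.)

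\textbf{Main obstacle.} There is essentially no obstacle left: the substantive combinatorial and algebraic work has already been done in Lemmas~\ref{lem:HinS7} and~\ref{lem:H23} and in Theorem~\ref{thm:facts}(1). The only point deserving a moment's care is confirming that the promise structure matches --- i.e. that the YES output genuinely lies in the \emph{smaller} variety $\mathsf{V}(S_c(abb))\wedge\mathsf{V}(S_7)$ (so that the promise gap in the conclusion is at least as wide as, in fact exactly, the gap provided by Lemma~\ref{lem:HinS7} versus Lemma~\ref{lem:H23}), and that the girth hypothesis ``at least $5$'' in Theorem~\ref{thm:facts}(1) is compatible with the ``girth at least $5$'' hypothesis of Lemma~\ref{lem:H23} and the ``girth at least $4$'' hypotheses used implicitly through Lemma~\ref{lem:hyperprop2}. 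All of these line up, so the proof is a short assembly of the preceding lemmas.
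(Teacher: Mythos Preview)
Your proposal is correct and follows essentially the same approach as the paper: reduce from the promise problem of Theorem~\ref{thm:facts}(1) via $\mathbb{H}\mapsto S_\mathbb{H}$, invoke Lemma~\ref{lem:HinS7} for the YES case and Lemma~\ref{lem:H23} for the NO case. You have added welcome detail on the polynomial-time computability of the reduction (which the paper leaves implicit), and your closing remarks correctly observe that hypothesis~(3) is not needed for the argument to go through, only to ensure the promise gap is genuine.
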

\begin{proof}
Theorem \ref{thm:facts} shows that it is \texttt{NP}-hard to distinguish those $3$-uniform hypergraphs of girth at least $5$ that are ${\leq 2}$-robustly 2-in-3 satisfiable from those that are not 2-in-3 satisfiable.
Lemma \ref{lem:HinS7} shows that when a $3$-uniform hypergraph $\mathbb{H}$  of girth at least $5$ is ${\leq 2}$-robustly 2-in-3 satisfiable then $S_\mathbb{H}\in \mathsf{V}(S_c(abb))\wedge \mathsf{V}(S_7)$, while Lemma \ref{lem:H23} shows that when $\mathbb{H}$ is not 2-in-3 satisfiable then $S_\mathbb{H}\notin \mathsf{V}(S)$.
\end{proof}
As noted, the semirings $S_c(abb)$, $S_7$ and ${B}_2^1(G)$ (for any group $G$ of finite exponent) satisfy the $1$-in-$3$ property, and all have finite $+$-height.  The non-cyclic elements in each case form an order ideal, and either $S_7$ or $S_c(abb)$ are subsemirings.  Thus we have the following corollary to Theorem~\ref{thm:hard}.
\begin{cor}\label{cor:hard}
Each of the ai-semirings $S_c(abb)$, $S_7$ and ${B}_2^1(G)$ \up(for any  group $G$ of finite exponent\up) generates a variety with \texttt{NP}-hard variety membership.
\end{cor}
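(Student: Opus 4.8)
The plan is to obtain Corollary~\ref{cor:hard} as a direct consequence of Theorem~\ref{thm:hard}, applied once with $S=S_7$, once with $S=S_c(abb)$, and once with $S=B_2^1(G)$ for $G$ of finite exponent.  First I would confirm the hypotheses of that theorem in each case.  Periodicity and finite $+$-height are clear: $S_7$ and $S_c(abb)$ are finite, while the multiplicative reduct of $B_2^1(G)$ is a Brandt semigroup over $G$ with adjoined identity (periodic since $G$ is), and its $+$-order is flat, hence of height one.  For hypothesis (1) I would note that in each of these flat semirings every noncyclic element is a coatom of the $+$-semilattice --- the top element $0$ is idempotent, hence cyclic --- and a coatom of a flat semilattice has nothing strictly below it; so the noncyclic elements trivially form an order ideal.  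Hypothesis (2), the $1$-in-$3$ property, is exactly what is recorded in the discussion preceding Lemma~\ref{lem:H23}: via $d=a$ at $c=a$ in $S_7$; via $d=a$ at $c=abb$ in $S_c(abb)$, with the property holding vacuously at the remaining noncyclic elements since none of them arises as a product $xyz$; and in $B_2^1(G)$ by the symmetric observation that $xyz\leq a$ forces $(x,y,z)\in\{(a,1,1),(1,a,1),(1,1,a)\}$, together with the analogous statement at the other noncyclic elements.  Hypothesis (3) is immediate: $S_7\in\mathsf{V}(S_7)$ and $S_c(abb)\in\mathsf{V}(S_c(abb))$, while $B_2^1(G)$ contains the subsemiring $\{1,e,0\}$, where $e$ is an off-diagonal element of the Brandt part; this subsemiring is isomorphic to $S_7$, since $e^2=0$ and, from the order structure of $B_2^1(G)$ recalled above, $e$ is incomparable to $1$ with both covered by $0$, so $1+e=0$, matching the addition of $S_7$.

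Having verified the hypotheses, I would invoke Theorem~\ref{thm:hard}: for each such $S$ it is \texttt{NP}-hard to decide, for a given finite algebra, whether it lies in $\mathsf{V}(S_c(abb))\wedge\mathsf{V}(S_7)$ or fails to lie in $\mathsf{V}(S)$, under the promise that exactly one of these holds.  The final step is to observe that $\mathsf{V}(S_c(abb))\wedge\mathsf{V}(S_7)\subseteq\mathsf{V}(S)$ in all three cases --- trivially when $S$ is $S_7$ or $S_c(abb)$, and because $\mathsf{V}(S_7)\subseteq\mathsf{V}(B_2^1(G))$ (as $S_7\leq B_2^1(G)$) in the remaining case.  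Consequently every ``yes'' instance of this promise problem lies in $\mathsf{V}(S)$ and every ``no'' instance does not, so any decision procedure for membership in $\mathsf{V}(S)$ also decides the promise problem; hence membership in $\mathsf{V}(S)$ is \texttt{NP}-hard, which is the statement of the corollary.

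I do not expect a genuine obstacle here: the substance is all contained in Theorem~\ref{thm:hard} and in Lemmas~\ref{lem:HinS7} and~\ref{lem:H23}.  The only two points that will need an explicit sentence are the verification that the noncyclic elements form an order ideal --- which reduces to the triviality that in a flat semilattice nothing lies strictly below a coatom --- and the identification of a copy of $S_7$ inside $B_2^1(G)$ together with the check that this copy carries the $S_7$ addition, which is read off the order shown in Figure~\ref{fig:B21}.
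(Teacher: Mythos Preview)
Your approach is the same as the paper's: verify the hypotheses of Theorem~\ref{thm:hard} for each of the three semirings, using the discussion preceding Lemma~\ref{lem:H23}, and then read off the conclusion.  Your extra sentence explaining why the promise problem of Theorem~\ref{thm:hard} reduces to membership in $\mathsf{V}(S)$ (via $\mathsf{V}(S_c(abb))\wedge\mathsf{V}(S_7)\subseteq\mathsf{V}(S)$) is a welcome clarification that the paper leaves implicit.

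There is one factual slip to fix.  You assert that the $+$-order on $B_2^1(G)$ is flat, hence of height one, and then argue that in a flat semilattice nothing lies strictly below a coatom.  But $B_2^1(G)$ is \emph{not} flat: as shown in Figure~\ref{fig:B21} and in the discussion after Definition~\ref{def:1in3}, the identity $1$ sits strictly below the two nonzero idempotents of $B_2(G)$, so the $+$-order has height~$2$.  This does not damage your conclusions, but the reasons need adjusting.  Finite $+$-height still holds (height~$2$).  For hypothesis~(1), the point is not that coatoms in a flat order are minimal, but rather that the specific noncyclic elements of $B_2^1(G)$---namely the off-diagonal Brandt elements $(1,g,2)$ and $(2,g,1)$---are minimal in the $+$-order (they are incomparable to $1$ and to each other, covered only by $0$), so the set of noncyclic elements is trivially downward closed.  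With that correction your argument goes through.
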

\begin{remark}
The computational problem of deciding membership in the variety generated by a finite flat algebra of finite signature and with absorbing $0$ is in \texttt{NP}.  Thus the membership problem for finite algebras in  $\mathsf{V}(S_c(abb))$ and in $\mathsf{V}(S_7)$ is \texttt{NP}-complete.
\end{remark}
\begin{proof}
 \texttt{NP}-hardness follows immediately from Theorem \ref{thm:hard}.  For membership in $\texttt{NP}$, let $A$ and $B$ be finite ai-semirings and with $B$ a flat semiring.  A certificate for membership of $B$ in $\mathsf{V}(A)$ is as follows: for each pair $a\neq b$ in $B$ we give a congruence $\theta$ such that $B/\theta$ is a flat semiring, $a\notin b/\theta$ and separating homomorphisms witnessing membership of the partial algebra $(B/\theta)\backslash\{0\}$ in $\mathsf{SP}(A\backslash \{0\})$; see Willard \cite[Theorem 1.2]{wil}.
\end{proof}
Corollary \ref{cor:hard} shows that $S_7$, $S_c(abb)$ and ${B}_2^1$ (and many other semirings) do not have a finite basis for their equations: if they did, then the finite basis would provide a polynomial time algorithm to determine membership in their variety, which would contradict Corollary \ref{cor:hard}.  Technically this assumes $\texttt{P}\neq \texttt{NP}$, though if we verify first order reductions throughout, we could observe that a finite basis would place the membership problem in $\texttt{AC}^0$ and use the known fact that $\texttt{AC}^0\subsetneq \texttt{NP}$.  We are, however, in a position to give a direct proof that these semirings, and those of the form $S_c(a_1\dots a_k)$ for $k>2$ are NFB and in some cases INFB relative to a broad class of ai-semirings, including all that satisfy the conditions of Corollary~\ref{cor:hard}, and in fact all finite flat semirings.  The \emph{index} of a semiring $S$, if it exists, will be the smallest $k$ such $S\models x^k\approx x^{k+p}$ for some $p$.  When $S$ is finite then its index exists and is obviously at most $|S|$.

\begin{thm}\label{thm:p3}
Let $S$ be any finite ai-semiring whose noncyclic elements form an order ideal $I$, and let $k'$ be the index of $S$.  If $S_c(a_1\dots a_{k})\in\mathsf{V}(S)$ for some $k\geq \max(k',3)$ then $S$ has no finite basis for its equational theory.
\end{thm}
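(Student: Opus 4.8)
The plan is a standard-shape non-finite-basis argument built on the two ingredients of Section~\ref{sec:hypergraphs}: the Erd\H{o}s--Hajnal hypergraphs $H_{n,\ell}$ of Theorem~\ref{thm:facts}(2), and the hypergraph semirings $S_\mathbb{H}$. Suppose for contradiction that the equational theory of $S$ has a finite basis $\Sigma$ (which we may take to include the ai-semiring axioms), and let $n$ bound the number of variables appearing in any identity of $\Sigma$. Put $\ell:=|S|$, choose $N$ to be a sufficiently large value depending only on $n$ and $k$ (a polynomial suffices, as the second step will show), and let $\mathbb{H}:=H_{N,\ell}$ with its isolated vertices discarded (this preserves both properties below, since isolated vertices affect neither $\ell$-colourability nor substructures). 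Thus $\mathbb{H}$ is a $k$-uniform hypergraph that is not $\ell$-colourable but every one of whose $N$-element substructures is a $k$-uniform hyperforest; in particular $\mathbb{H}$ has girth exceeding $N/k$, so Lemmas~\ref{lem:hyperprop2} and~\ref{lem:clarification} apply, and $k\ge 3$ makes the forest machinery available. The proof consists in showing both $S_\mathbb{H}\in\mathsf{V}(S)$ and $S_\mathbb{H}\notin\mathsf{V}(S)$.

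\emph{That $S_\mathbb{H}\in\mathsf{V}(S)$.} Since $S_\mathbb{H}$ is an ai-semiring it suffices to show $S_\mathbb{H}\models\Sigma$, and since every identity of $\Sigma$ has at most $n$ variables, this reduces to showing that every subsemiring $T$ of $S_\mathbb{H}$ generated by at most $n$ elements lies in $\mathsf{V}(S)$ (an identity in $\le n$ variables fails in $S_\mathbb{H}$ only via an assignment of $\le n$ values, hence only inside such a $T$). Each generator of $T$ is $0$ or a product $\ba_W$ for a subhyperedge $W$ of size $\le k$; letting $V_0$ be the union of these sets, $|V_0|\le nk$ and $T$ sits inside the subsemiring generated by $\{\ba_v:v\in V_0\}$. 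By Lemma~\ref{lem:clarification} the latter is controlled by the subhyperedge, hyperedge and link data of $\mathbb{H}$ among $V_0$, and one checks it embeds into $S_{\mathbb{H}'}$, where $\mathbb{H}'$ is obtained from the induced subhypergraph $\mathbb{H}[V_0]$ by adjoining, for each hyperedge $e$ of $\mathbb{H}$ with $2\le |e\cap V_0|<k$, the edge $(e\cap V_0)\cup\{d_w:w\in e\setminus V_0\}$, the ``dummy'' vertex $d_w$ being shared exactly among the hyperedges through the common external vertex $w$ (this sharing is exactly what keeps linked $(k-1)$-sets linked). Since $\mathbb{H}'$ has at most $\binom{|V_0|}{2}$ hyperedges, a cycle in $\mathbb{H}'$ would, after replacing dummies by their external vertices, yield a cycle of $\mathbb{H}$ on at most $N$ vertices; so $\mathbb{H}'$ is a hyperforest. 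Then Lemma~\ref{lem:forest2} gives $S_{\mathbb{H}'}\in\mathsf{V}(S_c(a_1\dots a_k))\subseteq\mathsf{V}(S)$ by hypothesis, whence $T\in\mathsf{V}(S)$, and therefore $S_\mathbb{H}\models\Sigma$, i.e.\ $S_\mathbb{H}\in\mathsf{V}(S)$.

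\emph{That $S_\mathbb{H}\notin\mathsf{V}(S)$.} It is enough to show $S_\mathbb{H}\in\mathsf{V}(S)$ would force $\mathbb{H}$ to be $\ell$-colourable, contradicting its choice. As $S$ and $S_\mathbb{H}$ are finite, $S_\mathbb{H}$ is a homomorphic image via some $\psi$ of a subalgebra $A\le S^P$ with $P$ finite; $A$ has finite $+$-height, so for $x\in S_\mathbb{H}$ we may let $\bar x$ be the largest element of $\psi^{-1}(x)$, and then $\bar x\bar y\le\overline{xy}$. Since $\ba^2=0\ne\ba$ in $S_\mathbb{H}$, the element $\ba$ is noncyclic; and if $\bar\ba(p)$ were cyclic for every $p\in P$ then $\bar\ba$ would be cyclic in $S^P$, forcing $\ba=\psi(\bar\ba)=\ba^m$ with $m\ge2$, i.e.\ $\ba=0$, a contradiction. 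So there is a coordinate $p^*$ with $c:=\bar\ba(p^*)$ noncyclic in $S$. Colour each vertex $v$ of $\mathbb{H}$ by $\gamma(v):=\bar\ba_v(p^*)\in S$, using at most $|S|=\ell$ colours. If a hyperedge $\{u_1,\dots,u_k\}$ were monochromatic with common colour $s$, then from $\bar\ba_{u_1}\cdots\bar\ba_{u_k}\le\bar\ba$ (both map to $\ba$ under $\psi$, by Rule~(3)) we would get $s^k\le c$ in the $+$-order; since the noncyclic elements of $S$ form an order ideal, $s^k$ would be noncyclic, contradicting that $k\ge k'$ forces $s^k$ to lie in a subgroup of $\langle S;\cdot\rangle$, hence to be cyclic. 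Thus $\gamma$ is a proper $\ell$-colouring of $\mathbb{H}$, the desired contradiction. Hence $S$ has no finite equational basis.

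\emph{Anticipated main obstacle.} The hypergraph and ai-semiring bookkeeping aside, the one genuinely delicate point is the first step: a small subsemiring of $S_\mathbb{H}$ need not be a hypergraph semiring on $V_0$, because the induced subhypergraph $\mathbb{H}[V_0]$ has \emph{more} zero products than the subsemiring generated by $V_0$ (two vertices of $V_0$ may share a hyperedge of $\mathbb{H}$ that leaves $V_0$). Getting around this requires the shared-dummy construction above and a careful check that it both preserves all linkings and remains a hyperforest for $N$ a suitable polynomial in $n$ and $k$. The colouring step, by contrast, is short, and is precisely where the two structural hypotheses are used: the index bound $k\ge k'$ (to make $s^k$ cyclic) and the order-ideal condition on the noncyclic elements (to propagate noncyclicity downward from $c$).
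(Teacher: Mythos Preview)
Your proposal is correct and follows essentially the same approach as the paper: both use the Erd\H{o}s--Hajnal hypergraphs of Theorem~\ref{thm:facts}(2), show that small subsemirings of $S_\mathbb{H}$ embed into hyperforest semirings (hence lie in $\mathsf{V}(S_c(a_1\dots a_k))\subseteq\mathsf{V}(S)$ via Lemma~\ref{lem:forest2}), and derive the contradiction from the colouring argument at a noncyclic coordinate. Your shared-dummy enlargement $\mathbb{H}'$ is the same idea as the paper's $\mathbb{G}^+$ (the paper simply keeps the actual external vertices rather than renaming them $d_w$, and takes the induced subhypergraph), and your anticipated obstacle is exactly the point the paper flags as the delicate step; the only cosmetic difference is that the paper extends each generator's subhyperedge to a full hyperedge when forming $V_\mathbb{G}$, which spares it any discussion of isolated vertices.
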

\begin{proof}
Let $n$ be an arbitrary positive integer, and let $m$ denote the number $k\binom{kn}{2}$.  Let $\ell$ denote the number of elements in $S$.  By Theorem~\ref{thm:facts}(2), there exists a $k$-uniform hypergraph $\mathbb{H}_{n,\ell}=(V_{n,\ell},E_{n,\ell})$ of chromatic number greater than $\ell$ and girth greater than $m$.  Let $T$ be a subsemiring of $S_{\mathbb{H}_{n,\ell}}$ generated by at most $n$ elements; let $T_{\rm gen}$ denote the chosen set of generators.  For each $t\in T_{\rm gen}$ we may select $k$ generators $\ba_{u_{t,1}},\dots, \ba_{u_{t,k}}$ of $S_{\mathbb{H}_{n,\ell}}$ such that $t$ is equal to $\ba_{u_{t,1}}\dots\ba_{u_{t,i}}$ for some $i$ between $1$ and $k$ and $\{u_{t,1},\dots,u_{t,k}\}$ is a hyperedge.  In the case that $1<i<k-1$ the choice of $u_{t,i+1},\dots,u_{t,k}$ is unique because Lemma \ref{lem:hyperprop2}(I) implies that $\{u_{t,1},\dots,u_{t,i}\}$ extends to a unique hyperedge of $\mathbb{H}_{n,\ell}$ in this case.  In the case where $i=k-1$ the choice of $u_{t,k}$ is unique, but (by the defining property (4) for~$S_{\mathbb{H}_{n,\ell}}$) we may replace $\{u_{t,1},\dots,u_{t,k-1}\}$ by any other subhyperedge of size $k-1$ that is linked to $\{u_{t,1},\dots,u_{t,k-1}\}$.  In the case where $i=k$ then we may choose any hyperedge, while when $i=1$ then we may choose any hyperedge extending $\{u_{t,1}\}$.
In all cases however, we fix a choice and can generate $t$ from $\ba_{u_{t,1}},\dots, \ba_{u_{t,k}}$.  We consider now the subsemiring $T^+$ of $S_{\mathbb{H}_{n,\ell}}$ generated by $\{\ba_{u_{t,i}}\mid t\in T_{\rm gen},  i\leq k\}$, and let $\mathbb{G}$ denote the subhypergraph of $\mathbb{H}_{n,\ell}$ induced by the vertex set $V_\mathbb{G}:=\{u_{t,i}\mid t\in T_{\rm gen},  i\leq k\}$.
Now $|V_\mathbb{G}|\leq nk<m=k\binom{nk}{2}$ so that the condition on the girth of $\mathbb{H}_{n,\ell}$ ensures that~$\mathbb{G}$ is a hyperforest.
By Lemma \ref{lem:forest2}, we have that $S_\mathbb{G}$ lies in $\mathsf{V}(S_c(a_1\dots a_k))\subseteq \mathsf{V}(S)$.
Unfortunately, this information is not enough for us to conclude the proof.
The semiring $T^+$ is \emph{almost} equal to $S_\mathbb{G}$ except that, due to of the nature of the defining rules for the constructions, the semiring $T^+$ may carry the ``shadows'' of hyperedges in $\mathbb{H}_{n,\ell}$ that are not themselves in $\mathbb{G}$.  The issue arises in Rules~(1) and~(4) in the definition.
In the case of Rule~(1), there may be pairs of vertices $\{u,v\}$ in $\mathbb{G}$ that are subhyperedges in $\mathbb{H}_{n,\ell}$ but not in~$\mathbb{G}$, so that $\ba_u\ba_v=0$ in $S_\mathbb{G}$ but not in $T^+$.
In the case of Rule~(4)  there may be pairs of $(k-1)$-element sets of vertices $\{u_1,\dots,u_{k-1}\}$ and $\{v_{1},\dots,v_{k-1}\}$ in~$\mathbb{G}$ that are linked in $\mathbb{H}_{n,\ell}$ but not in~$\mathbb{G}$: this happens when the unique $k^{\rm th}$ vertex $w$ that completes these as hyperedges is not in the vertex set of $\mathbb{G}$.
In this situation  $T^+$ will have $\ba_{u_1}\dots\ba_{u_{k-1}}=\ba_{v_1}\dots\ba_{v_{k-1}}\neq 0$ while in $S_\mathbb{G}$ these two products will be~$0$.
To circumvent these issues we instead construct a larger subhyperforest $\mathbb{G}^+$ of $\mathbb{H}_{n,\ell}$ that contains all the corrections to these anomalies for~$\mathbb{G}$ in comparison to $\mathbb{H}_{n,\ell}$ so that $T^+$ embeds into $S_{\mathbb{G}^+}$.
We do not need to argue that  $S_{\mathbb{G}^+}$ is a subsemiring of $S_{\mathbb{H}_{n,\ell}}$---it will not typically be so---only that $S_{\mathbb{G}^+}$, and therefore $T^+$, lies in the variety of $S_c(a_1\dots a_k)$.
But this last fact follows immediately from Lemma \ref{lem:forest2} and the fact that $\mathbb{G}^+$ is a hyperforest.
So it remains to actually construct $\mathbb{G}^+$.   We define $\mathbb{G}^+$ as the induced subhypergraph of $\mathbb{H}_{n,\ell}$ on the following set $V_{\mathbb{G}^+}$ of vertices:
\[
\bigcup\{e\in E_{n,\ell}\mid 2\leq |e\cap V_\mathbb{G}|\}.
\]
Note that $V_\mathbb{G}\subseteq V_{\mathbb{G}^+}$ because we ensured that every $v\in V_\mathbb{G}$ is contained in some hyperedge $e$ of $\mathbb{G}$ and then $2\leq k=|e\cap V_\mathbb{G}|$ gives $v\in e\subseteq V_{\mathbb{G}^+}$.  We now claim that $|V_{\mathbb{G}^+}|\leq m$.  To see this, note that if $e,f\in E_{n,\ell}$ have $|e\cap V_\mathbb{G}|\geq 2$ and $|f\cap e|\geq 2$ then $e=f$ because no two distinct hyperedges share more than one vertex.  Thus we have
\[
V_{\mathbb{G}^+}=\bigcup\{e\in E_{n,\ell}\mid \text{$e$ extends a 2-element subset of $V_\mathbb{G}$}\},
\]
so that $|V_{\mathbb{G}^+}|\leq k\binom{|V_\mathbb{G}|}{2}\leq k\binom{kn}{2}=m$.  Again then, the condition on the girth of~$\mathbb{H}_{n,\ell}$ ensures that $\mathbb{G}^+$  is a hyperforest, so that $S_{\mathbb{G}^+}\in \mathsf{V}(S_c(a_1\dots a_k))$ by Lemma~\ref{lem:forest2}.  This time however we have that~$T^+$ is a subsemiring of $S_{\mathbb{G}^+}$ because the nonzero elements of $T^+$ are products of generators that correspond to subhyperedges of~$\mathbb{H}_{n,\ell}$ but lying within~$V_\mathbb{G}$, and every subset of $V_\mathbb{G}$ that is a subhyperedge in $\mathbb{H}_{n,\ell}$ is also a subhyperedge in~$\mathbb{G}^+$.  Of course, there may now be subsets of $V_{\mathbb{G}^+}$ that are subhyperedges in $\mathbb{H}_{n,\ell}$ but not in~$\mathbb{G}^+$, but we did not need to correct these: only those that corresponded to elements of $T^+$.

We have shown that $n$-generated subsemirings of $S_{\mathbb{H}_{n,\ell}}$ lie in $\mathsf{V}(S_c(a_1\dots a_k))\subseteq \mathsf{V}(S)$.  Now we show that $S_{\mathbb{H}_{n,\ell}}\notin \mathsf{V}(S)$; we attempt to follow the proof of Lemma~\ref{lem:H23}.  As in that proof, let $A\leq S^Q$ for some finite set $Q$ be such that $\phi:A\to S_{\mathbb{H}_{n,\ell}}$ is a surjective homomorphism, and let $\bar{a}$ be the maximum element of $\phi^{-1}(\ba)$.  As $\ba$ is not cyclic it follows that there is $q\in Q$ such that $\bar{a}(q)$ is not cyclic.

For each $u\in V_{n,\ell}$, fix an element $a_u\in \phi^{-1}(\ba_u)$.  Because $\mathbb{H}_{n,\ell}$ is not $\ell$-colourable and the map $u\mapsto a_u(q)$ maps into $S$ (of size $\ell$), there must be a hyperedge $\{u_1,\dots,u_k\}$ such that $a_{u_1}(q)=\dots=a_{u_k}(q)$.  Then $(a_{u_1}\dots  a_{u_k})(q)$ is cyclic, because  $A\models x^k\approx x^{k+p}$.  But $(a_{u_1}\dots  a_{u_k})(q)\leq \bar{a}(q)$ and $\bar{a}(q)$ is not cyclic, which then contradicts the assumption that the noncyclic elements form an order ideal of~$S$.  Thus $S_{\mathbb{H}_{n,\ell}}\notin \mathsf{V}(S)$ as required.
\end{proof}
\begin{remark}\label{rem:p3monoid}
Theorem \ref{thm:p3} also holds in the monoid signature, where $M_c(a_1\dots a_{k})$ replaces $S_c(a_1\dots a_{k})$.
\end{remark}
\begin{proof}
We have noted throughout that the relevant results and arguments to prove Theorem \ref{thm:p3} hold when $M_c(a_1\dots a_{k})$ replaces $S_c(a_1\dots a_{k})$ and the proof uses $M_{\mathbb{H}_{n,\ell}}$ in place of $S_{\mathbb{H}_{n,\ell}}$; see Remark \ref{rem:monoidforest} (in place of Lemma \ref{lem:forest}), and note that the final paragraph of the proof of Theorem \ref{thm:p3} holds because if $M_{\mathbb{H}_{n,\ell}}\in \mathsf{V}(S)$ in the signature $\{+,\cdot,1\}$, then $S_{\mathbb{H}_{n,\ell}}\in \mathsf{V}(S)$ in the signature $\{+,\cdot\}$, which we showed is not true.
\end{proof}
While $S_7$ is not INFB, Theorem \ref{thm:p3} shows it is relatively INFB for a broad class of semirings.
\begin{cor}\label{cor:rinfb}
The semiring $S_7$, is inherently nonfinitely based relatively to the property of being generated by a flat semiring.
\end{cor}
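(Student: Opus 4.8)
The plan is to obtain the statement as a direct instance of Theorem~\ref{thm:p3}, applied to the generating flat semiring itself. So let $B$ be a finite flat semiring with $S_7\in\mathsf{V}(B)$ (in the context of the article, ``flat semiring'' here means a finite one); we must show that $\mathsf{V}(B)$ has no finite basis, equivalently that $B$ has no finite equational basis. The idea is simply to verify that $S:=B$ meets the two hypotheses of Theorem~\ref{thm:p3}: that the noncyclic elements of $B$ form an order ideal, and that $S_c(a_1\dots a_k)\in\mathsf{V}(B)$ for some $k\ge\max(k',3)$, where $k'$ is the index of $B$.

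For the first hypothesis, I would note that it holds automatically for \emph{any} flat semiring. In a flat semiring the $+$-order has height one with the absorbing element $0$ as its top; since $0$ is idempotent it is cyclic, so every noncyclic element sits strictly below $0$. But the downward closure of a non-top element $x$ in a height-one order is just $\{x\}$, so any set consisting of non-top elements---in particular the set of noncyclic elements---is downward closed. This disposes of the first hypothesis, and also explains why flat semirings are exactly the class one wants to isolate here.

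For the second hypothesis, set $k:=\max(k',3)$, the index $k'$ existing because $B$ is finite. Recall that $S_7=M_c(a)$ for a single letter $a$, so that Proposition~\ref{pro:SinM}, applied with $W=\{a\}$, gives $S_c(a_1\dots a_k)\in\mathsf{V}(M_c(a))=\mathsf{V}(S_7)$; and since $S_7\in\mathsf{V}(B)$ we have $\mathsf{V}(S_7)\subseteq\mathsf{V}(B)$, whence $S_c(a_1\dots a_k)\in\mathsf{V}(B)$ with $k\ge\max(k',3)$, exactly as required. Theorem~\ref{thm:p3} then yields that $B$, and therefore $\mathsf{V}(B)$, is nonfinitely based. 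As $B$ was an arbitrary finite flat semiring with $S_7$ in its variety, $S_7$ is inherently nonfinitely based relative to the property of being generated by a flat semiring.

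I do not expect a genuine obstacle here: the substantive content is already carried by Theorem~\ref{thm:p3} and Proposition~\ref{pro:SinM}, and the corollary is really the observation that these two results mesh. The only point requiring a moment's care is that Theorem~\ref{thm:p3} obliges us to place a \emph{specific} member $S_c(a_1\dots a_k)$ of the family inside $\mathsf{V}(B)$, with $k$ determined by the index of $B$ and nothing else; one must therefore use that Proposition~\ref{pro:SinM} supplies $S_c(a_1\dots a_k)\in\mathsf{V}(S_7)$ for \emph{every} $k$, and not merely for $k=3$.
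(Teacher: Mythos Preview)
Your proof is correct and follows the same route as the paper: invoke Proposition~\ref{pro:SinM} (with $S_7=M_c(a)$) to place $S_c(a_1\dots a_k)$ in $\mathsf{V}(S_7)\subseteq\mathsf{V}(B)$ for every $k$, then apply Theorem~\ref{thm:p3} to the flat semiring $B$. Your explicit verification that the noncyclic elements of a flat semiring form an order ideal (because the unique top element $0$ is idempotent and hence cyclic) is a detail the paper leaves implicit, but otherwise the arguments coincide.
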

\begin{proof}
This follows immediately from Proposition \ref{pro:SinM} (with $S_7=M_c(a)$) and Theorem \ref{thm:p3}.
\end{proof}
Lemma \ref{lem:0directjoin} shows that we may also replace ``a flat semiring'' by ``a finite family of flat semirings'' in Corollary \ref{cor:rinfb}.  We may also visit the remaining cases from Corollary \ref{cor:hard}; the result follows already from Corollary \ref{cor:hard}, assuming $\texttt{P}\neq\texttt{NP}$, but follows unconditionally from Theorem \ref{thm:p3}.
\begin{cor}\label{cor:NFBtake2}
The ai-semirings $S_c(abb)$ and ${B}_2^1(G)$ \up(for any  group $G$ of finite exponent\up) are not finitely based.
\end{cor}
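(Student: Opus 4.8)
The plan is to deduce both assertions from Theorem~\ref{thm:p3}: in each case I would check that the semiring is finite with its noncyclic elements forming an order ideal, bound its index $k'$, and exhibit some $S_c(a_1\dots a_k)$ with $k\ge\max(k',3)$ inside the variety it generates, so that Theorem~\ref{thm:p3} applies.

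For $S_c(abb)$ this is routine. The semiring is nilpotent, so its only cyclic multiplicative element is $0$; being a flat semiring with top element $0$, its noncyclic elements --- precisely the nonzero ones --- form an order ideal. Checking the six elements $0,a,b,ab,bb,abb$ shows $x^3\approx0$, hence $x^3\approx x^4$, whereas $x^2\approx x^3$ fails at $x:=b$ since $b^2=bb\ne0=b^3$; thus the index is $k'=3$. Since $abb$ has length $3$ and is not a power of a single letter, Proposition~\ref{pro:SinS} gives $S_c(a_1a_2a_3)\in\mathsf{V}(S_c(abb))$, and as $3=\max(k',3)$, Theorem~\ref{thm:p3} gives the nonfinite basis property.

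For $B_2^1(G)$: multiplicatively this is the Brandt semigroup over $G$ with an adjoined identity, so its cyclic elements are $0$, the added $1$, and the members of the two nonzero maximal subgroups, while the noncyclic elements are the ``off-diagonal'' ones, that is, those $x$ with $x^2=0$. By the description of the semilattice order recalled before Lemma~\ref{lem:H23} --- each off-diagonal element, and the added $1$, is covered only by $0$, and nothing lies strictly below an off-diagonal element (see Figure~\ref{fig:B21}) --- these noncyclic elements form an order ideal. Because $G$ has finite exponent, $x^2\approx x^{2+p}$ holds for suitable $p$, so the index is at most $2$. Finally, for any off-diagonal element $a_0$ the set $\{1,a_0,0\}$ is a subsemiring isomorphic to $S_7=M_c(a)$, so $\mathsf{V}(S_7)\subseteq\mathsf{V}(B_2^1(G))$, and Proposition~\ref{pro:SinM} gives $S_c(a_1a_2a_3)\in\mathsf{V}(M_c(a))\subseteq\mathsf{V}(B_2^1(G))$; with $k:=3$, Theorem~\ref{thm:p3} applies whenever $G$ is finite. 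The main (and only genuinely delicate) point is that Theorem~\ref{thm:p3} is stated for \emph{finite} semirings, whereas a group of finite exponent need not be finite. For infinite $G$ I would rerun the proof of Theorem~\ref{thm:p3}: the only use of finiteness of $S$ there is in the colouring argument establishing $S_{\mathbb{H}_{n,\ell}}\notin\mathsf{V}(S)$, and this can instead be supplied by Lemma~\ref{lem:H23}, since $B_2^1(G)$ has the $1$-in-$3$ property, finite $+$-height and finite period for every $G$ of finite exponent --- taking $\mathbb{H}$ a $3$-uniform hypergraph of girth exceeding $3\binom{3n}{2}$ that is not $2$-colourable, hence not $2$-in-$3$ satisfiable, as furnished by Theorem~\ref{thm:facts}(2). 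Alternatively, the infinite case follows from Corollary~\ref{cor:hard} and the fact that the reductions there are first-order, so a finite equational basis would place variety membership in $\texttt{AC}^0$, contradicting $\texttt{NP}$-hardness.
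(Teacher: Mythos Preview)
Your approach is correct and is exactly the route the paper takes: both cases are deduced from Theorem~\ref{thm:p3} (with Corollary~\ref{cor:hard} mentioned as a conditional alternative), and you have supplied the verification of hypotheses that the paper leaves implicit. Your treatment of the infinite $G$ case is in fact more careful than the paper's: the paper simply cites Theorem~\ref{thm:p3} without comment, whereas you correctly observe that the theorem as stated requires $S$ finite and supply valid workarounds---your first (replacing the chromatic-number step by Lemma~\ref{lem:H23}, using non-$2$-colourability in Theorem~\ref{thm:facts}(2) to obstruct $2$-in-$3$ satisfiability) is the cleanest and matches exactly how the paper's machinery is designed to fit together.

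One small correction of phrasing that does not affect the argument: you write that the added $1$ ``is covered only by $0$'', but in $B_2^1(G)$ the identity $1$ is in fact covered by the two nonzero idempotents $(1,e,1)$ and $(2,e,2)$ (as in Figure~\ref{fig:B21}). What you need---and correctly use---is only that nothing lies strictly below an off-diagonal element, so that the noncyclic elements form a downward-closed set.
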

The next corollary provides a complete classification of the finite basis property in one natural case.
\begin{cor}\label{cor:monoid}
If a finite flat semiring has a multiplicative identity element, then it is finitely based if and only if it is the flat extension of a finite group whose nilpotent subgroups are abelian.
\end{cor}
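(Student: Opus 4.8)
The plan is to read off Corollary~\ref{cor:monoid} from the structural dichotomy of Proposition~\ref{pro:monoid}, the nonfinite basis result Theorem~\ref{thm:p3}, and the known classification of finitely based flat groups from~\cite{jac:flat}. First I would record a small observation that makes Theorem~\ref{thm:p3} immediately available: in \emph{any} flat semiring the set of noncyclic elements is automatically an order ideal in the $+$-order. Indeed the top element $0$ is idempotent, hence cyclic, so it is not among the noncyclic elements; and the $+$-order has height one, with every element other than $0$ sitting immediately below $0$, so every subset that omits $0$ is downward closed. Consequently every finite flat semiring meets the standing hypothesis of Theorem~\ref{thm:p3}.

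For the ``only if'' direction I would argue contrapositively. Let $S$ be a finite flat semiring with multiplicative identity. By Proposition~\ref{pro:monoid}, either $S$ contains $S_7$ as a subsemiring, or $S$ is the flat extension $\flat(G)$ of a finite group $G$; these alternatives are mutually exclusive, since a flat group has every nonzero element cyclic whereas $a^2=0$ in $S_7$. In the first case $S_7=M_c(a)$, so Proposition~\ref{pro:SinM} (taking $W=\{a\}$) gives $S_c(a_1\dots a_n)\in\mathsf{V}(S_7)\subseteq\mathsf{V}(S)$ for every $n$; choosing $n=\max(k',3)$ where $k'$ is the index of the finite semiring $S$, Theorem~\ref{thm:p3} (applicable by the observation above) shows $S$ is NFB, and moreover $S$ is not of the asserted form at all. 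In the second case, if $G$ has a nonabelian nilpotent subgroup then $\flat(G)$ is NFB by \cite[Theorem~7.3]{jac:flat}. Thus whenever $S$ is finitely based we must have $S=\flat(G)$ with every nilpotent subgroup of $G$ abelian.

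The ``if'' direction is exactly the sufficiency half of the flat-group classification~\cite[Theorem~7.3]{jac:flat}: a flat extension $\flat(G)$ of a finite group all of whose nilpotent subgroups are abelian is finitely based. I expect no genuine obstacle in the write-up, since the proof is an assembly of results already in hand; the only points needing care are the (routine) verification that the order-ideal hypothesis of Theorem~\ref{thm:p3} is automatic in the flat setting, the exclusivity and exhaustiveness of the two cases of Proposition~\ref{pro:monoid}, and the bookkeeping of the citation to~\cite{jac:flat}. If one prefers to keep the nontrivial NFB implication internal to this paper, one can replace the appeal to~\cite{jac:flat} in the flat-group case by the forthcoming Theorem~\ref{thm:pgroup}, together with the elementary group-theoretic fact that a finite group has a nonabelian nilpotent subgroup if and only if one of its Sylow subgroups is nonabelian.
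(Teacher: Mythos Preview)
Your proof is correct and follows essentially the same route as the paper: the paper cites Proposition~\ref{pro:monoid}, Corollary~\ref{cor:rinfb}, and \cite[Theorem~7.3]{jac:flat}, while you have simply unpacked Corollary~\ref{cor:rinfb} into its ingredients (Proposition~\ref{pro:SinM} plus Theorem~\ref{thm:p3}) and made explicit the easy observation that the order-ideal hypothesis of Theorem~\ref{thm:p3} is automatic for flat semirings. The remark on mutual exclusivity of the two cases in Proposition~\ref{pro:monoid} is extra but harmless.
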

\begin{proof}
This is an immediate consequence of Proposition \ref{pro:monoid}, Corollary \ref{cor:rinfb} and the fact that a flat extension of a finite group is finitely based if and only if all of its nilpotent subgroups are abelian.
\end{proof}
In Section \ref{sec:group} we will see that we can add the further equivalent condition of ``not being INFB relative to being generated by a finite flat semiring'' as well.
\section{The semiring $S_7$}\label{sec:S7}
In \cite{ZRCSD} it is shown that all ai-semirings of order at most $3$, with the possible exception of $S_7$, are finitely based.
Combining this with Corollary \ref{cor:rinfb} we can state the following result.
\begin{cor}\label{cor:3element}
A finite ai-semiring of order at most $3$ is finitely based if and only if it is not $S_7$.
\end{cor}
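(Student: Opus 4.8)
The result is an immediate synthesis of the order-$3$ classification of~\cite{ZRCSD} with the nonfinite basis property for $S_7$ established earlier, so the plan is short. For the forward (``if'') direction, I would simply quote~\cite{ZRCSD}: that paper works through all $61$ ai-semirings of order $3$ (and the $6$ of order $2$) and establishes that every one of them other than $S_7$ is finitely based. Hence any ai-semiring of order at most $3$ that is not isomorphic to $S_7$ is FB.

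For the reverse (``only if'') direction I would argue the contrapositive, so that it suffices to show $S_7$ itself is NFB. This is precisely Corollary~\ref{cor:rinfb}: since $S_7=M_c(a)$, it is a flat semiring, and Corollary~\ref{cor:rinfb} says $S_7$ is NFB (indeed inherently so relative to the property of being generated by a flat semiring). If one preferred not to cite Corollary~\ref{cor:rinfb} directly, one could instead verify the hypotheses of Theorem~\ref{thm:p3} for $S=S_7$ by hand: the unique noncyclic element $a$ forms an order ideal in the flat $+$-order (nothing lies strictly below $a$), the index of $S_7$ is $k'=2$, and Proposition~\ref{pro:SinM} gives $S_c(a_1a_2a_3)\in\mathsf{V}(S_7)$, so Theorem~\ref{thm:p3} applies with $k=3\ge\max(k',3)$ and yields that $S_7$ has no finite equational basis. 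Combining the two directions gives the stated equivalence.

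There is essentially no obstacle remaining: all the substantive content has already been carried out in Theorem~\ref{thm:p3} and Corollary~\ref{cor:rinfb} (for the NFB half) and in~\cite{ZRCSD} (for the FB half). The only point requiring any care is the routine bookkeeping that $S_7$ really does satisfy the hypotheses being invoked, and this is in any case subsumed by Corollary~\ref{cor:rinfb}.
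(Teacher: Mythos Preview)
Your proposal is correct and follows essentially the same approach as the paper: the paper simply combines the classification from~\cite{ZRCSD} (all order~$\le 3$ ai-semirings except possibly $S_7$ are FB) with Corollary~\ref{cor:rinfb} (showing $S_7$ is NFB). Your optional alternative route through Theorem~\ref{thm:p3} directly is a harmless elaboration of what Corollary~\ref{cor:rinfb} already packages.
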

Three-elements is also the smallest possible size of a generator for non-polynomial time variety membership given that all $2$-element algebras (of finite signature) have a finite basis for their equations \cite{lyn}; the only other known example was given in \cite[\S8]{jac:SAT}.
In view of the special status that $S_7$ appears to have in terms of small ai-semirings, it seems pertinent to provide some further investigation to the equational properties of $S_7$.

First we observe that it is easily seen that $S_7$ as co-$\texttt{NP}$-complete equation checking problem; it is not the smallest with this property, as even the two element distributive lattice $D_2$ has this property \cite{BHR}.

Let $\mathscr{S}$ be a family of nonempty subsets of a set $\{x_1,\dots,x_n\}$.  For each $s\in \mathscr{S}$, let $\bw_s$ denote the product of the variables in $s$ (so, if $s=\{x,y,z\}$, then $\bw_s=xyz$, noting that as $S_7$ is commutative, the order of appearance in the product is not important).  Let $t_\mathscr{S}$ denote the term
\(
\sum_{s\in \mathscr{S}}\bw_s.
\)
\begin{lem}
Let $\mathscr{S}$ be a family of nonempty subsets of a set $\{x_1,\dots,x_n\}$, with each $x_i$ appearing in at least one element of $\mathscr{S}$.  Then $S_7\models t_\mathscr{S}\approx t_\mathscr{S}^2$ if and only if there is a subset $Y\subseteq \{x_1,\dots,x_n\}$ that intersects each $s\in\mathscr{S}$ exactly once.
\end{lem}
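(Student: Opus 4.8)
The plan is to check $t_\mathscr{S}\approx t_\mathscr{S}^2$ on $S_7$ by brute evaluation. A homomorphism from the free ai-semiring on $\{x_1,\dots,x_n\}$ into $S_7$ is nothing but a map $\phi\colon\{x_1,\dots,x_n\}\to\{1,a,0\}$, so the whole question reduces to understanding the pair of values $\big(\phi(t_\mathscr{S}),\phi(t_\mathscr{S}^2)\big)$ as $\phi$ ranges over all such maps, and then extracting the combinatorial condition on $\mathscr{S}$ that governs when these two values can differ.

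The first ingredient is two routine computations in $S_7$. For a product, $\phi(\bw_s)=\prod_{x\in s}\phi(x)$ equals $1$ exactly when every variable of $s$ is sent to $1$; it equals $a$ exactly when exactly one variable of $s$ is sent to $a$ and none is sent to $0$; and it equals $0$ in every other case (some variable of $s$ sent to $0$, or at least two sent to $a$, using $a^2=0$). For a sum, since $+$ on $S_7$ is the flat semilattice join with $0$ as top element, a finite sum of elements of $\{1,a,0\}$ is nonzero precisely when all its summands coincide. Applying this to $t_\mathscr{S}=\sum_{s\in\mathscr{S}}\bw_s$, and using the hypothesis that each $x_i$ occurs in some member of $\mathscr{S}$, one gets: $\phi(t_\mathscr{S})=1$ iff $\phi$ is identically $1$; $\phi(t_\mathscr{S})=a$ iff $\phi$ sends no variable to $0$ and the set $Y_\phi:=\phi^{-1}(a)$ meets every $s\in\mathscr{S}$ in exactly one element; and $\phi(t_\mathscr{S})=0$ in every other case.

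Next one expands $t_\mathscr{S}^2=\sum_{s,s'\in\mathscr{S}}\bw_s\bw_{s'}$ by distributivity and idempotency of $+$, and singles out the diagonal summands $\bw_s^2=\prod_{x\in s}x^2$. Since $1^2=1$ but $a^2=0^2=0$, one has $\phi(\bw_s^2)=1$ when every variable of $s$ goes to $1$ and $\phi(\bw_s^2)=0$ otherwise; combining this with the principle that a sum in $S_7$ is nonzero only when all summands agree forces $\phi(t_\mathscr{S}^2)=1$ when $\phi$ is identically $1$ and $\phi(t_\mathscr{S}^2)=0$ in every other case. Comparing with the previous paragraph, the two sides of $t_\mathscr{S}\approx t_\mathscr{S}^2$ already agree on every $\phi$ with $\phi(t_\mathscr{S})\in\{0,1\}$, so the identity can fail only at some $\phi$ with $\phi(t_\mathscr{S})=a$; and by the computation above such a $\phi$ exists precisely when there is a set $Y\subseteq\{x_1,\dots,x_n\}$ meeting every $s\in\mathscr{S}$ in exactly one element (realised by any $\phi$ with $\phi^{-1}(a)=Y$ and $\phi^{-1}(0)=\emptyset$). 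Reading this off gives the equivalence in the lemma between validity of $t_\mathscr{S}\approx t_\mathscr{S}^2$ in $S_7$ and the existence of such a transversal set $Y$ of $\mathscr{S}$.

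The step that will need the most care is the evaluation of $t_\mathscr{S}^2$: one must be certain that no off-diagonal term $\bw_s\bw_{s'}$ with $s\neq s'$ can rescue the sum to a nonzero value once a diagonal term $\bw_s^2$ has already collapsed to $0$. This is exactly the point where the flat structure of $S_7$ (a sum is nonzero only if all summands coincide) together with $a^2=0$ does the work; everything else is the short product-and-sum bookkeeping sketched above, together with the trivial observation that $\phi^{-1}(a)$ is unconstrained among subsets meeting each $s$ once, so the combinatorial condition is captured on the nose.
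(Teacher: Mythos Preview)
Your argument is correct and follows essentially the same route as the paper: identify that the only way $t_{\mathscr{S}}\approx t_{\mathscr{S}}^2$ can fail under an assignment $\phi$ is when $\phi(t_{\mathscr{S}})=a$, and then characterise exactly when this occurs in terms of the transversal set $Y=\phi^{-1}(a)$. The paper's proof is slightly quicker at one point: rather than expanding $t_{\mathscr{S}}^2$ by distributivity and analysing diagonal summands, it simply uses $\phi(t_{\mathscr{S}}^2)=\phi(t_{\mathscr{S}})^2$ and the fact that $a$ is the unique nonidempotent element of $S_7$, so nonidempotence of $\phi(t_{\mathscr{S}})$ is equivalent to $\phi(t_{\mathscr{S}})=a$.

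One caution: your computations correctly establish that the identity \emph{fails} in $S_7$ precisely when such a transversal $Y$ exists, but your final sentence phrases this as an equivalence between \emph{validity} and existence of $Y$, which is the wrong way round. (The lemma as stated in the paper has the same slip; the intended statement, consistent with both proofs and with the subsequent co-\texttt{NP}-completeness corollary, is that $S_7\not\models t_{\mathscr{S}}\approx t_{\mathscr{S}}^2$ if and only if such a $Y$ exists.)
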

\begin{proof}
For an assignment $\nu:\{x_1,\dots,x_n\}\to S_7$, let $Y$ denote those variables assigned $a$.  Then $\nu(t_\mathscr{S})$ is nonidempotent
if and only if $\nu(t_\mathscr{S})=a$, if and only if $\nu(x_i)=a$ for all $i$, if and only if $Y$ intersects each $s\in\mathscr{S}$ exactly once.
\end{proof}
When each $s\in\mathscr{S}$ has size exactly $3$, this is the well known \texttt{NP}-complete problem 1-in-3SAT.  Thus we have the following consequence.
\begin{cor}
Equation checking is co-\texttt{NP}-complete for $S_7$.
\end{cor}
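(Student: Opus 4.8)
The plan is to prove membership in co-\texttt{NP} and co-\texttt{NP}-hardness separately, with essentially all of the work for the latter already contained in the preceding Lemma.

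For membership in co-\texttt{NP}, I would pass to the complementary problem: given ai-semiring terms $p$ and $q$ in variables among $x_1,\dots,x_n$, decide whether $S_7\not\models p\approx q$. A witness for a ``yes'' answer is an assignment $\nu\colon\{x_1,\dots,x_n\}\to S_7$ for which $\nu(p)\neq\nu(q)$; since $S_7$ is a fixed three-element algebra, the values $\nu(p)$ and $\nu(q)$ are computed in time linear in the sizes of $p$ and $q$, so such a witness is verifiable in polynomial time. Hence the complementary problem lies in \texttt{NP}, and so equation checking for $S_7$ lies in co-\texttt{NP}.

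For co-\texttt{NP}-hardness I would reduce the \texttt{NP}-complete monotone 1-in-3SAT problem to the \emph{complement} of equation checking. An instance is a family $\mathscr{S}$ of three-element subsets of a variable set $\{x_1,\dots,x_n\}$; after discarding any variable lying in no member of $\mathscr{S}$ we may assume the hypothesis of the Lemma. The reduction outputs, by an elementary (indeed first-order) syntactic construction, the pair of terms $t_\mathscr{S}=\sum_{s\in\mathscr{S}}\bw_s$ and $t_\mathscr{S}^2$. By the Lemma, $S_7\not\models t_\mathscr{S}\approx t_\mathscr{S}^2$ exactly when some $Y\subseteq\{x_1,\dots,x_n\}$ meets every $s\in\mathscr{S}$ in exactly one element, that is, exactly when $\mathscr{S}$ is a satisfiable instance of 1-in-3SAT. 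So satisfiable instances are carried to ``yes'' instances of the complement of equation checking and unsatisfiable instances to ``no'' instances; hence the complement of equation checking for $S_7$ is \texttt{NP}-hard, and combined with the previous paragraph it is \texttt{NP}-complete, so that equation checking for $S_7$ is co-\texttt{NP}-complete.

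There is no serious obstacle; the only point requiring care is the polarity of the reduction. Under an assignment $\nu$ the two sides $t_\mathscr{S}$ and $t_\mathscr{S}^2$ differ exactly when $\nu(t_\mathscr{S})$ equals the (multiplicatively) non-idempotent element $a$, which by the Lemma's analysis happens exactly at a valid 1-in-3 assignment; hence the identity $t_\mathscr{S}\approx t_\mathscr{S}^2$ is \emph{valid} in $S_7$ iff $\mathscr{S}$ is \emph{unsatisfiable}, so the reduction must go from 1-in-3SAT into the \emph{complement} of equation checking (equivalently, from the co-\texttt{NP}-complete 1-in-3-unsatisfiability problem into equation checking itself). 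Everything else is routine: term evaluation in a fixed finite algebra is polynomial time, and the map $\mathscr{S}\mapsto(t_\mathscr{S},t_\mathscr{S}^2)$ is plainly polynomial.
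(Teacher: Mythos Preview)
Your proposal is correct and follows essentially the same approach as the paper: the hardness direction is exactly the intended application of the preceding Lemma to 1-in-3SAT, and the membership in co-\texttt{NP} is the routine observation (which the paper leaves implicit, merely citing \cite{BHR}) that a falsifying assignment in a fixed finite algebra can be guessed and checked in polynomial time. Your explicit attention to the polarity of the reduction and the preprocessing step removing isolated variables are sound and only make the argument cleaner.
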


This idea generalises to a full combinatorial characterisation of the equational theory of $S_7$.  By distributivity, all ai-semiring terms in variables $X=\{x_1,x_2,\dots\}$ are finite sums of words in $X^+$, thus for the purposes of characterisation we may consider an \emph{ai-semiring identity} (${\bf AI}$-identity for short) over $X$ as an
expression of the form $\bu\approx \bv$, where $\bu, \bv\in \wp_{\rm fin}f(X^+)$, the nonempty finite subsets of $X^+$.
Thus we consider
$\bu_1+\cdots+\bu_k\approx \bv_1+\cdots+\bv_\ell$ synonymously with the ${\bf AI}$-identity
$\{\bu_i \mid 1\leq i\leq k\}\approx \{\bv_j \mid 1\leq j\leq \ell\}$.

Let $\bw\in X^+$ and $x\in X$. Then let
\begin{itemize}
\item $c(\bw)$ (the \emph{content} of $\omega$) denote the set of variables that occur in $\bw$.
\item $\occ(x, \bw)$ denotes the number of occurrences of $x$ in $\bw$.
\end{itemize}
These notations extend to sums of words in an obvious way though we need only
\[
c\Big(\sum_{1\leq i\leq n}\bw_i\Big)=\bigcup_{1\leq i\leq n}c(\bw_i).
\]
Now consider an ${\bf AI}$-term $\bw:= \bw_1 + \dots + \bw_m$ , where each $\bw_i\in X^+$.  We let $\delta(\bw)$ denote the set of nonempty subsets $Z$ of $c(\bw)$ such that both
\begin{itemize}
\item
$Z \cap c(\bw_i)$ is a singleton for every $\bw_i$  and
\item $\occ(x, \bw_i ) =1$ if $\{x\}=Z \cap c(\bw_i)$.
\end{itemize}

Let $M_2$ denote $2$-element flat semiring on $\{1,0\}$ with $0$ a multiplicative and additive zero and $1\cdot 1=1$; in the $M(W)$ notation, we have $M_2=M(1)$ where $1$ is the empty word. The solution of the equational problem for $M_2$
can be found in~\cite{sharen}.
\begin{lem}\label{l1} \up(\cite[Lemma~1.1~(iii)]{sharen}.\up)
Let $\bu\approx \bv$ be an ${\bf AI}$-identity. Then
$\bu\approx \bv$ holds in $M_2$ if and only if $c(\bu)=c(\bv)$.
\end{lem}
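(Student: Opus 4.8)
The plan is to reduce everything to a single observation about how ${\bf AI}$-terms evaluate in $M_2$, and then read off both implications. First I would record the evaluation rule for a single word: for any assignment $\nu\colon X\to M_2=\{0,1\}$ and any $\bw\in X^+$, the value $\nu(\bw)$ equals $1$ if $\nu(x)=1$ for every $x\in c(\bw)$, and equals $0$ otherwise. This is immediate from $1\cdot 1=1$ together with the fact that $0$ is a multiplicative zero, via a trivial induction on word length. Next I would lift this to sums: since $M_2$ is a flat semiring we have $1+0=0$ and $0$ additively absorbing, so for $\bw=\bw_1+\dots+\bw_m$ the value $\nu(\bw)$ equals $1$ exactly when every $\nu(\bw_i)=1$, and is $0$ otherwise (a mixed sum $1+0$ collapses to $0$). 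Combining the two, $\nu(\bw)=1$ if and only if $\nu$ sends every variable of $c(\bw)=\bigcup_i c(\bw_i)$ to $1$. In other words, the value of any ${\bf AI}$-term under $\nu$ depends only on its content, and is determined solely by whether every content variable is assigned $1$.

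With this in hand the lemma follows in both directions with no further work. For the ``if'' direction, suppose $c(\bu)=c(\bv)$; then for every $\nu$ the condition characterising $\nu(\bu)=1$ is literally the same as the one characterising $\nu(\bv)=1$, so $\nu(\bu)=\nu(\bv)$ for all $\nu$, and hence $\bu\approx\bv$ holds in $M_2$. For the ``only if'' direction, suppose $c(\bu)\neq c(\bv)$, and without loss of generality choose $x\in c(\bu)\setminus c(\bv)$; take $\nu$ with $\nu(x)=0$ and $\nu(y)=1$ for all $y\neq x$. Then $\nu(\bu)=0$, because a content variable of $\bu$ receives the value $0$, whereas $\nu(\bv)=1$, because $x\notin c(\bv)$ and every other variable receives the value $1$. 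Thus $\bu\approx\bv$ fails in $M_2$.

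I do not expect any genuine obstacle here; the only point requiring a moment's care is the evaluation of a sum in $M_2$ --- specifically that the flat-semiring addition sends $1+0$ to $0$ rather than to $1$ --- which is exactly what forces the content to be the sole invariant of a term. (It is also worth noting, when setting up the separating assignment in the ``only if'' direction, that every $\bw_i$ lies in $X^+$ and so is nonempty; hence contents are nonempty and the argument is symmetric in which side of the identity supplies the extra variable.)
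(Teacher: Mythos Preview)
Your proof is correct. The paper itself does not prove this lemma at all; it is simply quoted from \cite[Lemma~1.1~(iii)]{sharen} without argument. Your direct verification---computing that an ${\bf AI}$-term evaluates to $1$ in $M_2$ exactly when every content variable is assigned $1$, and then reading off both implications---is the natural elementary argument and is entirely self-contained. There is nothing to compare against here; you have supplied a proof where the paper merely gives a citation.
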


\begin{pro}\label{npro1}
Let $\bu\approx \bv$ be an ${\bf AI}$-identity. Then
$\bu\approx \bv$ holds in $S_7$ if and only if $c(\bu)=c(\bv)$ and $\delta(\bu)=\delta(\bv)$.
\end{pro}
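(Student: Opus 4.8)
The plan is to distil everything into a single explicit formula for the value of an arbitrary $\mathbf{AI}$-term under an assignment into $S_7$, and then to read off both implications from it. Recall that an assignment $\nu$ sends each variable to one of $1,a,0$, that $a^2=0$, and that $0$ is absorbing for $\cdot$ and for $+$ while $1$ is the multiplicative identity; in particular the value of a term depends only on $\nu$ restricted to its content, and a sum of elements of $S_7$ equals $0$ unless all of its terms are equal and nonzero. Write $\bu=\bw_1+\dots+\bw_m$ with each $\bw_i\in X^+$, and set $Z:=c(\bu)\cap\nu^{-1}(a)$. A short computation on a single word shows that $\nu(\bw_i)=1$ iff $c(\bw_i)\subseteq\nu^{-1}(1)$, that $\nu(\bw_i)=a$ iff $c(\bw_i)\cap\nu^{-1}(0)=\varnothing$ and $c(\bw_i)\cap\nu^{-1}(a)$ is a singleton $\{x\}$ with $\occ(x,\bw_i)=1$, and that $\nu(\bw_i)=0$ in every other case. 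Summing in the flat semilattice then gives
\[
\nu(\bu)=\begin{cases}
1 & \text{if } c(\bu)\subseteq\nu^{-1}(1),\\
a & \text{if } c(\bu)\cap\nu^{-1}(0)=\varnothing \text{ and } Z\in\delta(\bu),\\
0 & \text{otherwise.}
\end{cases}
\]
The one place needing care is the middle line: the two clauses in the definition of $\delta(\bu)$ are exactly the conditions forcing \emph{every} summand $\bw_i$ to take the value $a$, so in the no-zero case $\nu(\bu)=a$ is equivalent to $Z\in\delta(\bu)$. Verifying this correspondence — and its consistency with the first line, since $\varnothing\notin\delta(\bu)$ — is the real content of the argument; everything after that is formal.

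For the ``if'' direction, assume $c(\bu)=c(\bv)$ and $\delta(\bu)=\delta(\bv)$ and let $\nu$ be arbitrary. I would case on the behaviour of $\nu$ on the common content $c(\bu)=c(\bv)$: if it sends some variable there to $0$, both sides evaluate to $0$ by the formula; if it sends all of them to $1$, both evaluate to $1$; otherwise $Z:=c(\bu)\cap\nu^{-1}(a)=c(\bv)\cap\nu^{-1}(a)$ is nonempty and no content variable is sent to $0$, so the formula gives $\nu(\bu)=a\iff Z\in\delta(\bu)=\delta(\bv)\iff\nu(\bv)=a$, with both values equal to $0$ when $Z\notin\delta(\bu)$. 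In every case $\nu(\bu)=\nu(\bv)$, so $S_7\models\bu\approx\bv$.

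For the ``only if'' direction, assume $S_7\models\bu\approx\bv$. Since $\{1,0\}$ is a subsemiring of $S_7$ isomorphic to $M_2$, the identity also holds in $M_2$, so $c(\bu)=c(\bv)$ by Lemma~\ref{l1} (alternatively, apply the formula with a single content variable sent to $0$ and the rest to $1$). For the equality of the $\delta$'s, take $Z\in\delta(\bu)$; then $Z\subseteq c(\bu)=c(\bv)$, and the assignment $\nu_Z$ with $\nu_Z(x)=a$ for $x\in Z$ and $\nu_Z(x)=1$ otherwise satisfies $\nu_Z^{-1}(a)=Z$ and $c(\bu)\cap\nu_Z^{-1}(0)=\varnothing$, so $\nu_Z(\bu)=a$ by the formula. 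As the identity holds, $\nu_Z(\bv)=a$, and the formula applied to $\bv$ forces $c(\bv)\cap\nu_Z^{-1}(a)=Z\in\delta(\bv)$. By symmetry $\delta(\bu)=\delta(\bv)$.

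The main obstacle is simply the bookkeeping behind the evaluation formula — in particular pinning down precisely which placements of the value $a$ among the variables make a sum of words evaluate to $a$ rather than collapse to $0$ — and matching this exactly against the definition of $\delta$. Once that formula is in place, both implications reduce to routine case analysis.
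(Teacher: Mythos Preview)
Your proof is correct and follows essentially the same approach as the paper's: both argue by case analysis on whether a variable in the content is sent to $0$, then on whether the set $Z$ of variables sent to $a$ is empty or lies in $\delta(\bu)$, and both use the substitution $\nu_Z$ (the paper's $\varphi_Z$) together with the $M_2$ subsemiring for the forward direction. The only difference is packaging---you front-load the case analysis into an explicit evaluation formula and then read off both implications, whereas the paper performs the same case distinctions inline within each direction.
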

\begin{proof}
Suppose that $\bu\approx \bv$ holds in $S_7$. Since $\{0, 1\}$ forms a subsemiring
of $S_7$ and is isomorphic to $M_2$, it follows from Lemma \ref{l1} that $c(\bu)=c(\bv)$.
Let $Z$ be an arbitrary element of $\delta(\bu)$.
Then $Z\subseteq c(\bu)$. It follows immediately that $Z\subseteq c(\bv)$.
Now consider the substitution $\varphi_Z: X\rightarrow S_7$: $\varphi_Z(x)=a$
if $x\in Z$ and $\varphi_Z(x)=1$ otherwise. Then $\varphi_Z(\bu)=a$.
Since $\bu\approx \bv$ holds in $S_7$, it follows that
$\varphi_Z(\bv)=\varphi_Z(\bu)=a$. This implies that $\varphi_Z(\bv_j)=a$
for every $\bv_j$ in $v$. Furthermore, for each $\bv_j$ in $\bv$,
there exists $x_j$ in $X$ such that $Z \cap c(\bv_j)=\{x_j\}$
and $\occ(x_j, \bv_j) =1$.
Thus $Z\in \delta(\bv)$ and so $\delta(\bu)\subseteq\delta(\bv)$.
Similarly, $\delta(\bv)\subseteq\delta(\bu)$.
We now conclude that $\delta(\bu)=\delta(\bv)$.

Conversely, assume that $c(\bu)=c(\bv)$ and $\delta(\bu)=\delta(\bv)$.
Let $\psi: X\rightarrow S_7$ be an arbitrary substitution.
Consider the following two cases:
\begin{itemize}
\item $\{x\mid x\in c(\bu), \psi(x)=0\}\neq \varnothing$.
Since $c(\bu)=c(\bv)$, it follows that $\psi(\bu)=\psi(\bv)=0$.

\item $\{x\mid x\in c(\bu), \psi(x)=0\}=\varnothing$.
Then $\psi(x)=a$  or $1$ for every $x$ in $c(\bu)\cup c(\bv)$.
Let $Z$ denote the set $\{x\mid x\in c(\bu), \psi(x)=a\}$.
If $Z=\varnothing$, then $\psi(\bu)=\psi(\bv)=1$. Otherwise,
we need to consider the following two subcases:
\begin{itemize}
\item[$\diamond$] $Z\in \delta(\bu)$. Then $\psi(\bu)=a$.
Since $\delta(\bu)=\delta(\bv)$, it follows that $Z\in \delta(\bv)$.
This implies that $\psi(\bv)=a=\psi(\bu)$.

\item[$\diamond$] $Z\notin \delta(\bu)$. Then $\psi(\bu)=0$.
Since $\delta(\bu)=\delta(\bv)$, it follows that $Z\notin \delta(\bv)$.
This implies that $\psi(\bv)=0=\psi(\bu)$.
\end{itemize}
\end{itemize}
We therefore have that $\bu\approx \bv$ holds in $S_7$.
\end{proof}

\section{Non-abelian $p$-groups imply nonfinitely based}\label{sec:group}
It follows from results in \cite[Theorem 7.3]{jac:flat} (stated slightly more directly in \cite[Remark~4.9]{jac:eqncomp}) that the flat extension of a finite group with a nonabelian Sylow subgroup is INFB relative to the property of being a variety generated by a finite flat semiring.  Unlike $S_7$, these examples are not commutative, but they do have a multiplicative reduct that is FB, due primarily to the Oates-Powell Theorem \cite{oatpow}.  We now observe that the same ideas yield a significantly stronger nonfinite basis result.
\begin{thm}\label{thm:pgroup}
Let $S$ be a finite ai-semiring.  If the multiplicative reduct of $S$ contains a nonabelian nilpotent subgroup, then every finite ai-semiring whose variety contains $S$ is nonfinitely based.
\end{thm}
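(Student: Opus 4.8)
The plan is to reduce the statement, in two elementary steps, to the finite basis problem for varieties of \emph{flat} semirings, where it is settled by the translation theory of Jackson~\cite{jac:flat,jac:eqncomp}. First, a finite nilpotent group is the internal direct product of its Sylow subgroups, so a nonabelian one has a nonabelian Sylow $p$-subgroup $P$; since $P$ is a subgroup of the multiplicative reduct of $S$, we may assume outright that $\langle S;\cdot\rangle$ contains a finite nonabelian $p$-group $P$, with identity element $e$. I would then check that $\flat(P)\in\mathsf V(S)$. Let $R$ be the subsemiring of $S$ generated by $P$: by distributivity and the commutativity, associativity and idempotence of $+$, and since $P$ is closed under $\cdot$, every element of $R$ has the form $\sum_{g\in A}g$ for a nonempty $A\subseteq P$, and $z:=\sum_{g\in P}g$ is absorbing for both operations of $R$. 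Put $J=\{\,w\in R : g+h\le w\text{ for some }g\ne h\text{ in }P\,\}$. A short computation using left and right cancellation in $P$ (the key point being that if $g_1+\dots+g_m$, with $m\ge 2$ and the $g_i\in P$ distinct, equalled a group element then, after multiplying by its inverse, one would get $g_1+\dots+g_m=e$ with some $g_j\ne e$ satisfying $g_j\le e$, whence $e=g_j^{\,\mathrm{ord}(g_j)}\le g_j\le e$, a contradiction) shows that $J$ is an order filter and a multiplicative ideal of $R$ and that $R=P\mathbin{\dot\cup}J$; the ideal quotient $R/J$ is then visibly isomorphic to $\flat(P)$. Hence $\flat(P)\in\mathsf V(S)$, so $\flat(P)\in\mathsf V(T)$ whenever $S\in\mathsf V(T)$, and it suffices to prove that every finite ai-semiring $T$ with $\flat(P)\in\mathsf V(T)$ is nonfinitely based.

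For such a $T$, consider the subvariety $\mathsf W$ of $\mathsf V(T)$ cut out, within $\mathsf V(T)$, by the flat identities~\eqref{eq:flat}; by Lemma~\ref{lem:flatvariety} these are precisely the members of $\mathsf V(T)$ lying in the variety generated by all flat semirings, and $\mathsf W$ is axiomatised by $\mathrm{Id}(\mathsf V(T))$ together with the \emph{finitely many, fixed} identities~\eqref{eq:flat}. Since $\flat(P)$ is a flat semiring belonging to $\mathsf V(T)$, we have $\flat(P)\in\mathsf W$. Now I would invoke \cite[Theorem~7.3]{jac:flat} (cf.~\cite[Remark~4.9]{jac:eqncomp}): through the basis-preserving correspondence of \cite[Theorem~3.1]{jac:flat} between the equational logic of flat semirings and the universal Horn logic of their nonzero parts, a finite basis for $\mathsf W$ would produce a finite basis for a universal Horn class of (conditionally cancellative) semigroups containing the $p$-group $P$, which is impossible since a finite nonabelian $p$-group is not finitely $\mathrm Q$-based in the requisite sense. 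The customary ``finite flat generator'' wording under which this result is quoted is inessential: the non-finite-basedness half of the argument uses only that $\flat(P)$ belongs to the flat variety under consideration. Hence $\mathsf W$ is nonfinitely based.

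To finish: if $\mathsf V(T)$ had a finite equational basis $\Sigma$, then $\Sigma$ together with~\eqref{eq:flat} would axiomatise $\mathsf W$, making $\mathsf W$ finitely based and contradicting the previous paragraph; so $\mathsf V(T)$ is nonfinitely based, and by the first paragraph this holds for every finite ai-semiring whose variety contains $S$. The first and last steps are routine bookkeeping; the substance — and the expected main obstacle — is the middle step, where one must be sure that the flat-translation machinery of \cite{jac:flat,jac:eqncomp} delivers non-finite-basedness for \emph{every} flat variety containing $\flat(P)$, not merely for those generated by a single finite flat semiring, and that this ultimately rests on the group-theoretic fact that finite nonabelian $p$-groups are robustly not finitely $\mathrm Q$-based. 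The one genuinely new observation is that, once $S$ contains such a $P$, the automatically-present flat subvariety of $\mathsf V(T)$ must contain $\flat(P)$, so the flatness hypothesis on the generating algebra in the earlier literature can simply be dropped.
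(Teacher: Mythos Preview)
Your reduction strategy is clean, and the first step---showing $\flat(P)\in\mathsf V(S)$ by forming the subsemiring generated by $P$ and collapsing all proper sums---is exactly the paper's Lemma~\ref{lem:flatin} (with the same Lemma~\ref{lem:order} ingredient you rediscover). The final bookkeeping step, that a finite basis for $\mathsf V(T)$ together with the fixed flat laws~\eqref{eq:flat} would finitely axiomatise $\mathsf W$, is also fine.

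The gap is precisely where you flag it: the assertion that ``the non-finite-basedness half of the argument uses only that $\flat(P)$ belongs to the flat variety under consideration'' is false as stated. The variety of \emph{all} flat semirings contains $\flat(P)$ and is finitely based (Lemma~\ref{lem:flatvariety}), so \cite[Theorem~7.3]{jac:flat} cannot possibly yield NFB from the bare hypothesis $\flat(P)\in\mathsf W$. What that result actually uses is a finiteness bound on the generator: Ol$'$shanski\u{\i}'s groups $G_n$ are chosen so that $G_n\notin\mathsf Q(T)$ for the specific finite $T$, and one must then convert this into $\flat(G_n)\notin\mathsf W$. Since $\flat(G_n)$ is flat, $\flat(G_n)\in\mathsf W$ is equivalent to $\flat(G_n)\in\mathsf V(T)$, so you are thrown back on showing $\flat(G_n)\notin\mathsf V(T)$ for a general finite ai-semiring $T$. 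That is exactly the content of the paper's new \emph{Group Quotient Embedding Lemma} (Lemma~\ref{lem:reverse}) and its consequence Lemma~\ref{lem:Gin}: if $\flat(G_n)$ were in $\mathsf V(T)$ one would obtain a surjection $A\twoheadrightarrow\flat(G_n)$ from some $A\le T^k$, and the lemma forces $G_n$ to embed into $A$ and hence into a power of the multiplicative reduct of $T$, contradicting $G_n\notin\mathsf Q(T)$.

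In short, your framework is sound but does not bypass the paper's key contribution; at the ``main obstacle'' you identify, the argument collapses to precisely the paper's Lemmas~\ref{lem:reverse} and~\ref{lem:Gin}. You have not supplied that step, and the literature you cite does not supply it either: the translation machinery of \cite{jac:flat} controls flat varieties \emph{generated by a finite flat algebra}, not arbitrary locally finite flat varieties, and the flat SI members of $\mathsf V(T)$ need not be of bounded size.
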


This theorem will be proved over the remainder of this section.  We first establish some basic lemmas; the first is well known.
\begin{lem}\label{lem:order}
If $G$ is a periodic group and $\leq$ an order that is preserved by the multiplication of $G$, then $\leq$ is an antichain.  In particular, a subgroup of the multiplicative reduct of a finite semiring is always an antichain relative to the $+$ order.
\end{lem}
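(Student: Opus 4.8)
The plan is to reduce the statement to the elementary fact that a periodic group admits no nontrivial translation-invariant partial order, and then to observe that the additive semilattice order of an ai-semiring is compatible with multiplication.

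First I would treat the general claim about groups. Let $G$ be a periodic group with a partial order $\le$ such that $a\le b$ implies $ga\le gb$ for all $g\in G$ (left invariance is all that will be needed). Suppose $a\le b$ in $G$. Multiplying on the left by $a^{-1}$ gives $e\le c$, where $c:=a^{-1}b\in G$. Multiplying the relation $e\le c$ on the left repeatedly by $c$ yields a chain $e\le c\le c^2\le\cdots\le c^n$ for every $n\ge 1$. Since $G$ is periodic, $c^n=e$ for some $n\ge 1$, so this chain closes into a cycle $e\le c\le\cdots\le c^{n}=e$; antisymmetry of $\le$ then forces $c=e$, i.e.\ $a=b$. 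Hence $\le$ restricted to $G$ is an antichain.

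For the ``in particular'' clause I would recall that in an ai-semiring the operation $+$ is a semilattice operation, and hence induces a partial order (say $x\le y\iff x+y=y$). Distributivity of $\cdot$ over $+$ makes multiplication monotone for this order: if $x+y=y$ then $gx+gy=g(x+y)=gy$, so $gx\le gy$, and symmetrically for right multiplication. If $G$ is a subgroup of the multiplicative reduct of a finite semiring, then $G$ is finite, hence periodic, and the order inherited from the semiring is translation-invariant for the operation of $G$; so the first part applies and $G$ is an antichain in the $+$-order.

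There is no real obstacle here. The only points needing a word of care are that one-sided invariance of the order already suffices, and that although $\le$ is inherited from the ambient semiring $S$, the element $a^{-1}$ used in the step $e\le a^{-1}b$ genuinely lies in the subgroup $G$, so that all the multiplications in the argument stay inside $G$.
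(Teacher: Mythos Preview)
Your argument is correct and essentially identical to the paper's: the paper also reduces $a\le b$ to a comparison with the identity (there taking $k=ab^{-1}\le e$ rather than your $c=a^{-1}b\ge e$), iterates to obtain a monotone chain of powers, and uses periodicity plus antisymmetry to collapse it. Your explicit justification of the ``in particular'' clause via distributivity is a welcome addition that the paper leaves implicit.
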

\begin{proof}
Assume $g\leq h$.  Then $gh^{-1}\leq e$.  Let $k$ denote $gh^{-1}$; we have $k\leq e$ and $k^n=e$ for some $n$.  Then $kk\leq ke=k$.  Then $k^3\leq k^2\leq k\leq e$ and so on: $k^i\leq k^j$ whenever $i>j$.  But then $e=k^n\leq k^{n-1}\leq\dots\leq k^2\leq k\leq e$, so that all of these inequalities are equalities.  Thus $gh^{-1}=e$, or equivalently, $g=h$.
\end{proof}
In the following $\mathsf{HS}(S)$ will denote, as usual, those algebras obtained as a homomorphic images of subalgebras of $S$.
\begin{lem}\label{lem:flatin}
Let $G$ be a finite nontrivial subgroup of the multiplicative reduct of an ai-semiring $S$.  Then $\flat(G)\in\mathsf{HS}(S)$.
\end{lem}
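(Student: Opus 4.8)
The plan is to take as the subalgebra the subsemiring $T$ of $S$ generated by the subgroup $G$, and to exhibit $\flat(G)$ as an ideal quotient of $T$ in the sense of Section~\ref{sec:SW}. Since $G$ is closed under multiplication, a product of elements of $G$ again lies in $G$, so distributivity of $\cdot$ over $+$ forces every element of $T$ to be a finite sum of elements of $G$; by commutativity and idempotency of $+$ we may therefore write each element of $T$ in the form $\sum_{g\in A}g$ for some nonempty finite $A\subseteq G$, although the set $A$ need not be determined uniquely by the element. Viewing $G$ inside $T$ as the set of one-element sums, I would set $J:=T\setminus G$ and aim to show that $J$ is a well-behaved ideal and that $T/J\cong\flat(G)$.

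The one place where the hypotheses are really used is the claim that a sum $\sum_{g\in A}g$ with $|A|\ge 2$ is never equal to an element of $G$. Indeed, by Lemma~\ref{lem:order} the subgroup $G$ is an antichain in the $+$-order of $S$; but $\sum_{g\in A}g$ lies above each of its summands in the semilattice order, so were it equal to some $k\in G$ we would have $g\le k$ for every $g\in A$, with $g,k\in G$, forcing $A=\{k\}$ and contradicting $|A|\ge 2$. Hence $J$ consists precisely of the ``proper'' sums $\sum_{g\in A}g$ with $|A|\ge 2$; note that $J$ is a proper nonempty subset of $T$ exactly because $G$ is nontrivial (if $G$ were trivial then $T=G$ and $J=\varnothing$).

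It then remains to verify the routine closure properties and to identify the quotient. The set $J$ is closed under $+$, since a sum of proper sums is proper, the representing set only growing; and $J$ absorbs multiplication on both sides, since for $h\in G$ one has $h\cdot\sum_{g\in A}g=\sum_{g\in hA}g$ with $|hA|=|A|$, and the general case then follows by distributivity; similarly $J$ is an order filter, because if $j\in J$ and $j\le y$ then $y$ lies above all of the at least two summands of $j$ and so cannot lie in $G$. Thus the ideal quotient $T/J$ is a well-defined ai-semiring, and collapsing $J$ to a single element $0$ leaves the elements of $G$ pairwise distinct, with $g+g=g$, with $g+h=0$ for distinct $g,h\in G$, with products inside $G$ computed as in $G$, and with $0$ both additively and multiplicatively absorbing --- that is, $T/J\cong\flat(G)$. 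Since $T$ is a subsemiring of $S$ and $T/J$ is a homomorphic image of $T$, this gives $\flat(G)\in\mathsf{HS}(S)$. The only conceptual obstacle is the potential non-uniqueness of the representing set $A$, i.e.\ the possibility that $S$ identifies distinct subset-sums of $G$; the antichain Lemma~\ref{lem:order} is exactly what guarantees that no such identification can merge a proper sum with an element of $G$, which is all that is needed.
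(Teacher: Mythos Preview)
Your proof is correct and follows essentially the same approach as the paper's: take the subsemiring generated by $G$ (the paper calls it $S_G$, you call it $T$), observe that its elements are sums of group elements, define the set of proper sums $J$ (the paper's $G^+$), use the antichain Lemma~\ref{lem:order} to show $J$ is disjoint from $G$ and is an order filter, verify it is a multiplicative ideal via cancellativity of $G$, and identify the ideal quotient with $\flat(G)$. The level of detail and the logical structure match the paper's argument closely.
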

\begin{proof}
Let $S_G$ be the subsemiring generated by $G$, which consists of the elements of $G$ along with finite sums of elements of $G$ (this is clearly generated by $G$ and also a subsemiring).  Let $G^+$ denote all elements that can be written as a proper sum; that is, a sum involving at least two distinct elements of $G$; equivalently, $G^+=S_G\backslash G$, which is nonempty as $G$ is nontrivial.    We claim that $G^+$ is a multiplicative ideal of $\langle S_G;\cdot\rangle$ and also a filter of $\langle S_G;+\rangle$.  For the multiplicative reduct claim, note that if $s=g_1+\dots +g_n$ is a sum of distinct elements of $G$, where $n>1$, and $t$ is also a sum of distinct elements $h_1+\dots +h_m$, then $st=\sum_{i\leq n, j\leq m}g_ih_j$ and $g_1h_1\neq g_2h_1$, so that $st\in G^+$; by symmetry,  $ts\in G^+$ too.  For the filter property, simply note that if $s\leq g$ for some $g\in G$ and $s=g_1+\dots +g_n$ is as before, then $g_1\leq g$ and $g_2\leq g$, which contradicts Lemma \ref{lem:order}.  Thus $G^+$ is a multiplicative ideal and additive filter, so that we may take a Rees quotient (identifying all elements of $G^+$) and obtain an isomorphic copy of $\flat (G)$.
\end{proof}

The following lemma seems quite counterintuitive, as it shows that when a group arises within a quotient of (the multiplicative reduct of) a finite ai-semiring $S$, then it was already present as a subgroup of $S$.
\begin{lem}[The group quotient embedding lemma]\label{lem:reverse}
Let $S$ and $A$ be finite ai-semirings and let $G$ be a finite subgroup of the multiplicative reduct of~$S$.  If there is a surjective semiring homomorphism $\phi:A\to S$, then there is an injective semigroup homomorphism from $G$ into $A$.
\end{lem}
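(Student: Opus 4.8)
The plan is to first produce, from the multiplicative structure alone, a finite subgroup $H$ of the multiplicative reduct of $A$ that maps \emph{onto} $G$ under $\phi$, and then to ``correct'' $H$ down to an isomorphic copy of $G$ inside $A$ using distributivity and the idempotency of $+$. The correction step is genuinely needed: the surjection $H\to G$ will in general not split, so there need not be any \emph{subgroup} of $H$ mapping isomorphically onto $G$, and this is the one point at which the semiring axioms (rather than just the semigroup axioms) are essential.

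First I would set $T:=\phi^{-1}(G)$, which is a subsemigroup of the multiplicative reduct of $A$ and satisfies $\phi\rest{T}\colon T\twoheadrightarrow G$. Since $T$ is finite, standard finite-semigroup theory shows that its minimal ideal $K$ is completely simple, hence a union of groups, and in particular contains an idempotent $f$. As $\phi(K)$ is an ideal of the group $G$ it equals $G$, so, since the identity $e$ of $G$ is the only idempotent of $G$, we get $\phi(f)=e$. Because $f\in K$ and $K$ is an ideal we have $fTf\subseteq K$, hence $fTf=fKf$, and in the completely simple semigroup $K$ the set $fKf$ is precisely the maximal subgroup $H$ of $K$ at $f$ (a finite group with identity $f$). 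Therefore
\[
\phi(H)=\phi(fTf)=\phi(f)\,\phi(T)\,\phi(f)=eGe=G,
\]
using $ege=g$ for every $g\in G$. Thus $\phi\rest{H}\colon H\to G$ is a surjective homomorphism of finite groups, so for each $g\in G$ the fibre $C_g:=\{h\in H:\phi(h)=g\}$ is a nonempty coset of $\ker(\phi\rest{H})$, and all of the $C_g$ have the same size.

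Next comes the semiring step. For $g\in G$ I would put $b_g:=\sum_{h\in C_g}h\in A$, a well-defined nonempty finite sum. For $g,g'\in G$, distributivity gives $b_g b_{g'}=\sum_{h\in C_g}\sum_{h'\in C_{g'}}hh'$. For fixed $h\in C_g$, left multiplication by the unit $h$ is a bijection of $H$ that maps $C_{g'}$ into $C_{gg'}$ (since $\phi(hh')=gg'$) and hence \emph{onto} $C_{gg'}$, as $|C_{g'}|=|C_{gg'}|$; therefore $\sum_{h'\in C_{g'}}hh'=\sum_{k\in C_{gg'}}k=b_{gg'}$, and the idempotency of $+$ then gives $b_g b_{g'}=\sum_{h\in C_g}b_{gg'}=b_{gg'}$. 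So $g\mapsto b_g$ is a semigroup homomorphism from $G$ into the multiplicative reduct of $A$. Finally, applying $\phi$ and using idempotency of $+$ once more, $\phi(b_g)=\sum_{h\in C_g}\phi(h)=\sum_{h\in C_g}g=g$, so $\phi$ is a left inverse of $g\mapsto b_g$; in particular $g\mapsto b_g$ is injective, as required.

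I expect the main obstacle to be conceptual rather than technical: the finite-semigroup facts ($K$ completely simple, $fTf=H$ a group, $\phi\rest{H}$ onto $G$) are standard and the remaining manipulations are short, but one must appreciate that the statement genuinely fails at the level of semigroups because $\phi\rest{H}$ need not split. The role of the additive structure is exactly to supply, via the semilattice joins $b_g$ of the $\phi$-fibres, a substitute for the missing group-theoretic transversal of $\ker(\phi\rest{H})$; this works only because a finite nonempty sum of a single repeated element collapses to that element, which is what makes the products $b_g b_{g'}=b_{gg'}$ come out right.
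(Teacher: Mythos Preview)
Your proof is correct, but the second half takes a different route from the paper's, and your framing of why the semiring structure is needed is slightly off.

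Both arguments begin identically: form $T=\phi^{-1}(G)$, locate an idempotent $f$ in the minimal ideal of $T$, and obtain a finite subgroup $H=fTf$ of the multiplicative reduct of $A$ with $\phi\rest{H}\colon H\twoheadrightarrow G$. The divergence is in how one finishes. You construct a section $g\mapsto b_g:=\sum_{h\in C_g}h$ by summing the fibres and verify directly that it is an injective semigroup homomorphism; this is a clean and self-contained argument. The paper instead shows that $\phi\rest{H}$ is \emph{already injective}: if $g_1,g_2\in\ker(\phi\rest{H})$ then $g_1+g_2$ also lies in $H$ (by distributivity) and in $\ker(\phi\rest{H})$ (since $e+e=e$ in $S$), whence $g_1\le g_1+g_2$ and $g_2\le g_1+g_2$ in the $+$-order; but a periodic group in a compatible order is an antichain (Lemma~\ref{lem:order}), forcing $g_1=g_2$. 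Thus $|\ker(\phi\rest{H})|=1$ and $H\cong G$.

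So your expectation that ``the surjection $H\to G$ will in general not split'' is not borne out here: the paper shows it is always an isomorphism, so your correction step, while valid, is unnecessary. What your approach buys is independence from the antichain lemma; what the paper's approach buys is the sharper conclusion that $G$ already sits inside $A$ as the specific subgroup $H$, which is conceptually cleaner for the downstream use in Lemma~\ref{lem:Gin}. Note also that with the paper's argument your sums $b_g$ degenerate to single elements, since each fibre $C_g$ is a singleton.
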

\begin{proof}
First recall that  the semigroup-theoretic $\mathscr{J}$-order is defined by $x\leq_\mathscr{J} y$ if there exists $u,v$ (possibly empty) such that $x=uyv$; see a test such as Howie \cite{how} for example.
Observe that $\phi^{-1}(G)$ is a subsemigroup $A_G$ of the multiplicative reduct of $A$.  Because $A$ is finite, $A_G$ contains a minimal idempotent $e_A$ with respect to the $\mathscr{J}$-order; evidently we must have $\phi(e_A)=e$, as $e_Ae_A=e_A$.  Moreover, $e_AA_Ge_A$ is a finite group $H$ with $\phi(H)=G$.  The proof will then be complete if we can show that $\phi$ is injective on $H$.  Let $N\leq H$ denote $\phi^{-1}(e)\cap H$; the kernel of $\phi$ restricted to $H$.  Consider any $g_1,g_2\in N$, and observe that $g_1+g_2$ is also in $N$ because $\phi(g_1+g_2)=\phi(g_1)+\phi(g_2)=e+e=e$ and $e_A(g_1+g_2)e_A=g_1+g_2$ by distributivity.  But then $g_1\leq g_1+g_2$ and $g_2\leq g_1+g_2$ implies $g_1=g_2=g_1+g_2$ by Lemma \ref{lem:order}, which shows that $|N|=1$ as required.
\end{proof}

\begin{lem}\label{lem:Gin}
Let $S$ and $T$ be finite ai-semirings with $S\in\mathsf{V}(T)$ and let $G$ be a subgroup of the multiplicative reduct of $S$.  Then $G$ lies in the quasivariety of the multiplicative reduct of $T$.
\end{lem}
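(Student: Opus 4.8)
The plan is to treat this as a corollary of the group quotient embedding lemma (Lemma~\ref{lem:reverse}), once $S$ is realised as a quotient of a finite subpower of $T$. Note first that $G$, being a subgroup of the multiplicative reduct of the finite semiring $S$, is itself finite; hence it suffices to construct a semigroup embedding of $G$ into some finite direct power of $\langle T;\cdot\rangle$, since such an embedding witnesses membership of $G$ in the quasivariety generated by $\langle T;\cdot\rangle$.

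First I would use that $S$ is finite, hence finitely generated, to replace the hypothesis $S\in\mathsf{V}(T)=\mathsf{HSP}(T)$ by a \emph{finite} witness: there are $m\in\NN$, a subsemiring $A\le T^m$, and a surjective semiring homomorphism $\phi\colon A\onto S$. This is routine: fix a finite generating set of $S$, lift it through a surjection onto $S$ from a subsemiring of some power $T^I$ to obtain a finitely generated subsemiring $A$ of $T^I$ still mapping onto $S$; the coordinate projections $A\to T$ are determined by their action on these finitely many generators, so only finitely many of them are distinct, and projecting onto one coordinate per distinct projection embeds $A$ into a finite power of $T$. I would state this step without further detail.

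Finally, applying Lemma~\ref{lem:reverse} to the surjection $\phi\colon A\onto S$ and the finite subgroup $G$ of the multiplicative reduct of $S$ yields an injective semigroup homomorphism from $G$ into the multiplicative reduct of $A$. As $\langle A;\cdot\rangle$ is a subsemigroup of $\langle T;\cdot\rangle^m$, this produces a semigroup embedding of $G$ into a finite power of $\langle T;\cdot\rangle$, and the lemma follows. There is no real obstacle: all the substance lies in Lemma~\ref{lem:reverse}, which is precisely what makes it possible to recover $G$ \emph{inside} $A$ rather than merely inside a homomorphic image of $A$; the remainder is bookkeeping about finiteness and the standard $\mathsf{HSP}$ description of $\mathsf{V}(T)$.
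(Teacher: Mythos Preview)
Your proof is correct and follows essentially the same line as the paper's: realise $S$ as a quotient of a finite subpower $A\le T^k$, apply the group quotient embedding lemma to pull $G$ back into $A$, and conclude that $G$ embeds in a power of $\langle T;\cdot\rangle$. The paper's version is terser (it simply asserts the finite-power witness without justification), whereas you spell out why a finite power suffices, but the substance is identical.
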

\begin{proof}
There is $k\in \mathbb{N}$ and $A\leq T^k$ such that $\phi:A\onto S$.  By Lemma \ref{lem:reverse} we have that $G$ embeds into $A$ as a multiplicative subgroup.  Then $G$ is a subgroup of a power of the multiplicative reduct of $T$, or in other words is in the quasivariety generated by $T$ (as a multiplicative semigroup), as claimed.
\end{proof}

Now we may prove Theorem \ref{thm:pgroup}.
\begin{proof}[Proof of Theorem \ref{thm:pgroup}]
Let $S$ be a finite ai-semiring containing a non-abelian but nilpotent subgroup $G$.  Let $T$ be a finite ai-semiring with $S\in\mathsf{V}(T)$.  Ol$'$shanski\u{\i}~\cite{ols} proved that for each $n$ there is a finite group $G_n$ that does not lie in the quasivariety of $T$, but whose $n$-generated subgroups lie in the quasivariety of $G$.  (Technically, this is proved by Ol$'$shanski\u{\i} when $T$ is a group, but the argument uses only the cardinality of $T$, so holds in the broader class of all semigroups.)  Then the $n$-generated subsemirings of $\flat(G_n)$ lie in the variety of $\flat(G)$, which lies in $\mathsf{V}(S)\leq \mathsf{V}(T)$ by Lemma~\ref{lem:flatin}.  We wish to show that $\flat(G_n)\notin \mathsf{V}(T)$.  But this follows immediately from Lemma~\ref{lem:Gin}, because $G_n$ is not in the quasivariety of~$T$.
\end{proof}

Note that we also showed the following.
\begin{lem}\label{lem:localflat}
Let $G$ be a finite group and $S$ a finite semiring. The following are equivalent.
\begin{enumerate}
\item $\flat(G)\in \mathsf{V}(S)$,
\item $\flat(G)$ lies in the variety generated by the flat semirings in $\mathsf{HS}(S)$,
\item $G$ lies in the quasivariety of the subgroups of $S$.
\end{enumerate}
\end{lem}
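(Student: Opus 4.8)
The plan is to prove the cycle of implications $(2)\Rightarrow(1)\Rightarrow(3)\Rightarrow(2)$, assuming throughout that $G$ is nontrivial (the trivial case is degenerate and is best read as excluded). The implication $(2)\Rightarrow(1)$ needs nothing: every flat semiring in $\mathsf{HS}(S)$ lies in $\mathsf{V}(S)$, hence so does the variety generated by all such flat semirings.

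For $(1)\Rightarrow(3)$ I would apply Lemma~\ref{lem:Gin} with its two semirings instantiated as $\flat(G)$ and $S$: since $\flat(G)\in\mathsf{V}(S)$ and $G$ is a subgroup of the multiplicative reduct of $\flat(G)$, the lemma yields that $G$ lies in the quasivariety of the multiplicative reduct of $S$. As $G$ is finite it then embeds into $\langle S;\cdot\rangle^{k}$ for some $k$, and each coordinate projection of the image is a finite subsemigroup of $\langle S;\cdot\rangle$ that is also a homomorphic image of $G$, hence a subgroup of $\langle S;\cdot\rangle$; since $G$ embeds into the product of these projections, $G$ lies in the quasivariety generated by the subgroups of $S$, which is $(3)$. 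This step carries the genuine content of the lemma, but the content is already available: it is essentially a repackaging of the group quotient embedding Lemma~\ref{lem:reverse} and its corollary Lemma~\ref{lem:Gin}.

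The \emph{substantive} step is $(3)\Rightarrow(2)$. Suppose $G$ embeds into $H_{1}\times\dots\times H_{k}$ with each $H_{i}$ a subgroup of the multiplicative reduct of $S$; deleting the trivial factors we may assume every $H_{i}$ is nontrivial. By Lemma~\ref{lem:flatin} each $\flat(H_{i})$ belongs to $\mathsf{HS}(S)$ and is a flat semiring. Let $B$ be the subsemiring of $\flat(H_{1})\times\dots\times\flat(H_{k})$ generated by the image of $G$ under the given embedding; then $B\in\mathsf{SP}(\{\flat(H_{i})\})$, so $B$ lies in the variety generated by the flat semirings in $\mathsf{HS}(S)$. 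It remains to recover $\flat(G)$ as an ideal quotient of $B$ in the sense of Section~\ref{sec:SW}. Let $J$ be the set of elements of $B$ with at least one coordinate equal to $0$. Because $0$ is a multiplicative zero of each $\flat(H_{i})$, $J$ is a two-sided multiplicative ideal of $B$; because $0$ is the top element of the $+$-order of each $\flat(H_{i})$, an element lying above one with a zero coordinate also has a zero coordinate, so $J$ is an order-theoretic filter for $+$. Moreover $J$ is proper (the image of the identity of $G$ has no zero coordinate) and nonempty (two distinct elements of $G$ differ in some coordinate, and addition in $\flat(H_{i})$ sends distinct elements to $0$, so the sum of their images lies in $J$). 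Hence the ideal quotient $B/J$ is well defined. Finally, every element of $B$ is a sum of images of elements of $G$ — products of such images are again images of group elements — and such a sum acquires a zero coordinate as soon as it involves two distinct group elements; so the nonzero elements of $B/J$ are exactly the pairwise distinct images of elements of $G$, multiplying as in $G$, with the sum of any two distinct such images collapsing to $0=J$. This is precisely the structure of $\flat(G)$, so $B/J\cong\flat(G)$ and $(2)$ holds.

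I do not anticipate a serious obstacle: the only nonroutine ingredient, Lemma~\ref{lem:reverse}/Lemma~\ref{lem:Gin}, is already in hand, and the remainder is bookkeeping. The two points that require a little care are verifying in $(3)\Rightarrow(2)$ that $J$ is simultaneously a multiplicative ideal and a $+$-filter, so that the ideal-quotient machinery of Section~\ref{sec:SW} applies, and checking that no elements beyond the images of $G$ survive in $B/J$; both follow from the flatness of the $\flat(H_{i})$ together with the idempotency of $+$.
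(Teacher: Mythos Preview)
Your proof is correct and follows essentially the same cycle $(2)\Rightarrow(1)\Rightarrow(3)\Rightarrow(2)$ as the paper's own argument. The only notable difference is that for $(3)\Rightarrow(2)$ the paper simply cites \cite{jac:flat} for the fact that $G$ lying in the quasivariety of groups $\{H_i\}$ implies $\flat(G)$ lies in the variety of $\{\flat(H_i)\}$, whereas you reconstruct this directly via the ideal-quotient construction; your version is more self-contained but otherwise identical in spirit.
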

\begin{proof}
Using Lemma \ref{lem:Gin} we have that (1) implies that $G$ is in the quasivariety of the multiplicative reduct of $S$, which implies that $G$ is in the quasivariety generated by the class of subgroups of the multiplicative reduct of~$S$; that is (3).  In turn (3) implies that $\flat(G)$ is in the variety generated by the flat extensions of subgroups of~$S$ by \cite{jac:flat}; but these are amongst the flat semirings in $\mathsf{HS}(S)$ by Lemma \ref{lem:flatin}; so (2) holds.  That (2) implies (1) is trivial.
\end{proof}

As noted in \cite[Remark~4.9]{jac:eqncomp}, the smallest nonabelian nilpotent groups have 8 elements, so the smallest nonfinitely based ai-semirings from this group-theoretic approach have $9$ elements.    It also follows immediately from \cite{jac:flat} and \cite{ols} that there are infinitely many examples of \emph{limit varieties} of semirings that are generated by flat extensions of groups: nonfinitely based varieties whose subvarieties are all finitely based.  Subsequent work in preparation will give greater detail of these and other kinds of limit variety of ai-semiring, though we observe that the properties of the 27-element group $G_3$ described in \cite[\S3.4]{jac:continuum} show that it yields a concrete example after applying the flat extension construction.  (The group $G_3$ is just the $2$-generated free Burnside group $B(2,3)$ of exponent~$3$, as described in Burnside \cite{bur} or Hall \cite{hal}.)   %As every group of exponent $2$ is abelian, but there are exponent $3$ groups that are nonabelian (but nilpotent; see Hall \cite{hal} for example), the smallest period for an ai-semiring produced via this group-theoretic method is~$3$.

The power semiring $\wp(G)$ of a finite group contains $G$ as a subgroup, so Theorem~\ref{thm:pgroup} will apply to show that $\wp(G)$ is not finitely based whenever $G$ contains a nonabelian Sylow subgroup.
\begin{example}\label{eg:q}
The power semiring of the quaternion group is nonfinitely based as a semiring, and is inherently nonfinitely based relative to the property of being generated by a finite ai-semiring, but not relative to generating a locally finite variety.
\end{example}
\begin{proof}
Theorem \ref{thm:pgroup} shows that $\wp(Q)$ is inherently nonfinitely based relative to the property of being generated by a finite semiring.
The idempotents in $\wp(G)$ are precisely the subgroups (or $\varnothing$) of $G$.  When all subgroups of $G$ are normal, such as for $G=Q$, the idempotents of $\wp(Q)$ are central.  This is not possible for a finite INFB semigroup: it is a trivial consequence of the first author's analysis \cite{jac:INFB} (based on Mark Sapir's celebrated classification of INFB finite semigroups \cite{sap1,sap2}), as no minimal finite INFB divisor has central idempotents.  Indeed, it follows from the main classification there, and the fact that $\wp(G)$ is a block group (see \cite{pin} for example), that~$\wp(G)$ is INFB as a semigroup if and only if $B_2^1$ is a divisor.  Thus $\wp(Q)$ is not INFB as a semigroup, hence not as a semiring either by Theorem~\ref{thm:INFB}(3).
\end{proof}
The arguments for Example \ref{eg:q} apply equally to any finite nonabelian group whose subgroups are all normal (that is, a \emph{Hamiltonian} group).
To conclude this section we note an extension of Corollary \ref{cor:monoid}, which gives a rare example of a variety for which the finite basis problem is moderately complex but for which we can give an effective characterisation.
\begin{thm}\label{thm:classification}
In the signature $\{+,\cdot,1\}$ of semirings with identity, the variety generated by flat semirings with identity is defined by the ai-semiring laws, along with the flat semiring laws \up(Lemma \ref{lem:flatvariety}\up) and $1x\approx x1\approx x$.  The following are equivalent for a finite semiring with identity $S$ in this variety\up:
\begin{enumerate}
\item $S$ is FB\up;
\item $S$ is completely regular and all nilpotent subgroups are abelian\up;
\item all nilpotent subgroups of $S$ are abelian and the variety of $S$ avoids $S_7$.
\end{enumerate}
\end{thm}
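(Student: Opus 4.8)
The plan is to pin down the variety, record one structural fact about its finite members, and then prove the equivalences as a cycle $(1)\Rightarrow(3)\Rightarrow(2)\Rightarrow(1)$, working throughout in the signature $\{+,\cdot,1\}$.  For the variety in the statement, one inclusion is clear.  For the other, if $A$ satisfies the ai-semiring laws, the flat-semiring identities $(*)$ of Lemma~\ref{lem:flatvariety} and $1x\approx x1\approx x$, then its $\{+,\cdot\}$-reduct lies in the flat-semiring variety and so (Lemma~\ref{lem:flatvariety}) is a subdirect product of flat semirings $S_i$; as the constant $1$ adds no congruences, $A$ is itself a subdirect product of the $S_i$ with distinguished elements $1_i:=\pi_i(1_A)$, and projecting $1x\approx x1\approx x$ shows each $1_i$ is a multiplicative identity of $S_i$.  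Hence $A$ is a subdirect product of flat semirings with identity; moreover the subdirectly irreducibles of this variety are exactly the flat semirings with identity, so every finite member $S$ is a subdirect product $S\le\prod_{i=1}^m S_i$ of finitely many finite flat semirings with identity --- the structural fact used below.

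This fact has two consequences for a finite $S$ in the variety.  First, the noncyclic elements of $S$ form an order ideal of $\langle S;+\rangle$: in a flat factor $S_i$, $y_i\le x_i$ forces $y_i=x_i$ or $x_i=0$, each cyclic whenever $y_i$ is, so $y\le x$ with $y$ cyclic makes $x$ cyclic.  Second, $S$ is completely regular if and only if $\mathsf{V}(S)$ avoids $S_7$.  Indeed complete regularity of $\langle S;\cdot\rangle$ is an identity $x^{n+1}\approx x$, preserved by $\mathsf{S},\mathsf{H},\mathsf{P}$, so $S$ is completely regular iff every $S_i$ is; a non-completely-regular $S_i$ contains $S_7$ by Proposition~\ref{pro:monoid} (a flat group being completely regular), giving $S_7\in\mathsf{V}(S)$, while if $S$ is completely regular then $\mathsf{V}(S)\models x^{n+1}\approx x$, which $S_7$ fails at $x=a$.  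In particular $(2)\Leftrightarrow(3)$, so $(3)\Rightarrow(2)$ is immediate and only the link of $(1)$ with $(3)$ remains.

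For $(1)\Rightarrow(3)$ argue contrapositively.  If $S$ has a nonabelian nilpotent subgroup, Theorem~\ref{thm:pgroup} (whose conclusion covers $S$ itself) shows $S$ is not finitely based.  If instead $S_7\in\mathsf{V}(S)$, then Proposition~\ref{pro:SinM} with $W=\{a\}$, so that $M_c(W)=S_7$, gives $M_c(a_1\cdots a_k)\in\mathsf{V}(S_7)\subseteq\mathsf{V}(S)$ for $k$ the maximum of $3$ and the index of $S$; since the noncyclic elements of $S$ form an order ideal, the monoid form of Theorem~\ref{thm:p3} (Remark~\ref{rem:p3monoid}) shows $S$ is not finitely based.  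For $(2)\Rightarrow(1)$: complete regularity makes each $S_i$ completely regular, hence a flat group $\flat(G_i)$ by Proposition~\ref{pro:monoid}; since each $\flat(G_i)$ is a quotient of $S$ while $S$ is a subdirect product of the $\flat(G_i)$, we get $\mathsf{V}(S)=\mathsf{V}(\flat(G_1),\dots,\flat(G_m))$.  Applying Lemma~\ref{lem:reverse} to the projections $S\onto\flat(G_i)$ embeds each $G_i$ into the multiplicative reduct of $S$, so every $G_i$ --- and hence $G:=G_1\times\cdots\times G_m$, a product of copies of subgroups of $S$ --- has only abelian nilpotent subgroups and lies in the quasivariety of the subgroups of $S$; therefore $\flat(G)\in\mathsf{V}(S)$ by Lemma~\ref{lem:localflat}.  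Since each $\flat(G_i)$ also embeds in $\flat(G)$ via the coordinate embedding $G_i\hookrightarrow G$, we obtain $\mathsf{V}(\flat(G))\subseteq\mathsf{V}(\flat(G_1),\dots,\flat(G_m))=\mathsf{V}(S)$, so $\mathsf{V}(S)=\mathsf{V}(\flat(G))$.  As all nilpotent subgroups of $G$ are abelian, $\flat(G)$ is finitely based by \cite{jac:flat} (equivalently Corollary~\ref{cor:monoid}); hence $\mathsf{V}(S)=\mathsf{V}(\flat(G))$ is finitely based, and so is $S$.

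The main obstacle is $(2)\Rightarrow(1)$: finite basedness of the individual flat factors is not enough, because a subdirect product of finitely based algebras need not be finitely based, so one must collapse $\mathsf{V}(S)$ onto the variety of a single flat group $\flat(G)$ with abelian nilpotent subgroups; the delicate step is $\flat(G)\in\mathsf{V}(S)$, which rests on Lemma~\ref{lem:localflat} (and the flat-group equational analysis of~\cite{jac:flat} behind it), with Lemma~\ref{lem:reverse} supplying the control over the nilpotent subgroups of $G$.  A secondary, purely bookkeeping, matter is that Theorem~\ref{thm:pgroup}, Lemma~\ref{lem:localflat} and the flat-group finite basis theorem of~\cite{jac:flat} are invoked in the signature $\{+,\cdot,1\}$; as already seen for Theorem~\ref{thm:p3} in Remark~\ref{rem:p3monoid} and for Proposition~\ref{pro:monoid}, their proofs carry over with no essential change once the distinguished identity element is carried along.
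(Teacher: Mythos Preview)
Your proof is correct and follows essentially the same route as the paper's: decompose $S$ subdirectly into finitely many flat monoid factors, use Proposition~\ref{pro:monoid} to see each factor is either a flat group or contains $S_7$, invoke the monoid form of Theorem~\ref{thm:p3} and Theorem~\ref{thm:pgroup} for the NFB directions, and in the FB direction collapse $\mathsf{V}(S)$ to $\mathsf{V}(\flat(G))$ for a single finite group $G$ with abelian nilpotent subgroups (the paper simply takes $G$ to be the direct product of all subgroups of $S$, while you take $G=G_1\times\cdots\times G_m$ and route through Lemmas~\ref{lem:reverse} and~\ref{lem:localflat}, but the effect is identical). One small slip to correct: the embeddings $\flat(G_i)\hookrightarrow\flat(G)$ yield $\mathsf{V}(\flat(G_1),\dots,\flat(G_m))\subseteq\mathsf{V}(\flat(G))$, not the inclusion you wrote; combined with your earlier $\flat(G)\in\mathsf{V}(S)$ this still gives the desired equality $\mathsf{V}(S)=\mathsf{V}(\flat(G))$.
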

\begin{proof}
The variety generated by $S$ is equal to the variety generated by the set $\mathscr{S}$ of subdirectly irreducible quotients of $S$, which by Lemma \ref{lem:flatvariety} is a finite set of flat monoids.  But by Proposition \ref{pro:monoid}, a flat  semiring with identity either contains $S_7$ as a subsemiring (even when $1$ is in the signature) or is a flat extension of a group.  If $S_7$ is a subsemiring of a member of $\mathscr{S}$, then we may choose any $k$ greater than the index of $S$, and use the monoid version of Theorem \ref{thm:p3} (Remark \ref{rem:p3monoid}), using Proposition \ref{pro:SinM} to show that $S$ is not finitely based.  If $S$ (or equivalently, a member of $\mathscr{S}$) contains a nonabelian nilpotent subgroup, then $S$ is NFB by Theorem \ref{thm:pgroup}.  If $\mathscr{S}$ consists of flat extensions of groups whose nilpotent subgroups are abelian, then the variety of $S$ can be generated by the flat extension of the finite group $G$ obtained as the direct product of all subgroups of $S$.  All nilpotent subgroups of this semiring are abelian, so that $S$ is FB by \cite[Theorem 7.3]{jac:flat}.
\end{proof}

\section{Open problems and future directions}\label{sec:problems}
In this article we have shown that finite ai-semirings can have no finite basis for their equations for reasons that are nothing to do with the corresponding property for their semigroup reducts.  While a great many ai-semirings are covered by the results, there is a sense in which this is a start rather than a conclusion.  The finite basis property is clearly very rich for ai-semirings and the present results arguably open up more new directions than they close off.  An obvious overall goal is a complete classification of the finite basis property for ai-semirings, though this is probably too ambitious to provide good direction for future progress.
While the finite basis problem for general algebras is undecidable (McKenzie~\cite{mck}), we offer no speculations within the class of all ai-semirings despite the promising positive steps in Theorem \ref{thm:classification}.  Instead we list a number of more achievable problems that we feel might shape the area.

We have shown that $M_c(W)$ and $M(W)$ is always nonfinitely based (for finite, nonempty sets of words $W$), and that $S_c(W)$ is very often nonfinitely based.
\begin{problem}\label{prob:1}
\begin{enumerate}
\item For which finite sets of words $W$ is $S(W)$ finitely based as an ai-semiring?
\item When $W$ consists of powers of letters and words of length at most $2$, when is $S_c(W)$ finitely based?
\item More generally than 1 and 2\up: when a finite nilpotent semigroup carries a flat semiring structure, under what conditions is it finitely based?
\item More generally still, which finite flat semirings are finitely based?
\end{enumerate}
\end{problem}
It is also of interest to investigate Problem \ref{prob:1} in the context of the computational complexity of the membership problem.
\begin{problem}\label{prob:2}
When is the power semiring of a finite semigroup finitely based as an ai-semiring?
\end{problem}
The examples in the present article show that, unlike for semigroups, commutativity is not a barrier for the nonfinite basis property in the world of finite ai-semirings.  To some extent it even acts as a trigger for our arguments, as the hypergraph encoding makes inherent use of the commutativity of products formed from vertices.  On the other hand, the situation for flat groups \cite{jac:flat} depended explicitly on non-commutativity (of Sylow subgroups).  The following problem may be achievable.
\begin{problem}
Which finite multiplicatively-commutative ai-semirings are finitely based?  The restriction of this problem to nilpotent semirings, to flat semirings and to power semirings are natural restrictions of interest.
\end{problem}

Most of our examples do not have an additive zero element, while the important earlier investigations of Igor Dolinka \cite{dol0,dol1,dol2,dol3} included $0$ in the signature and axiomatically required that it be an additive identity and multiplicative zero.  This is of course not compatible with any flat ai-semiring structure though one can always extend a flat semiring to an ai-semiring by adjoining an additive identity (so that the $+$-semilattice reduct has height $2$).  It is not entirely clear how our methods apply in this case, and Problems \ref{prob:1}, \ref{prob:2} remain of interest (adding the required $0$ in the case of Problem \ref{prob:1}, and letting $\varnothing$ be the~$0$ in Problem \ref{prob:2}).  The finite basis property is even unclear in the case of adjoining a $0$ to $S_7$.
\begin{problem}\label{prob:3}
\begin{enumerate}
\item Is $S_7^0$ finitely based or not finitely based?
\item In the signature $\{+,\cdot\}$, what is the cardinality of the interval  $[\mathsf{V}(S_7),\mathsf{V}(S_7^0)]$ in the lattice of semiring varieties?
\item When is the finite basis (nonfinite basis) property stable under adjoining an additive $0$?
\end{enumerate}
\end{problem}
In this context, we now show that Theorem \ref{thm:pgroup} continues to hold in the setting of  semirings with $0$.
%The following observation is obvious and we omit the proof.
%\begin{lem}
%If $S$ and $T$ are ai-semirings and  $S\in\mathsf{HSP}(T)$, then $S^0\in\mathsf{HSP}(T^0)$.
%\end{lem}
We first reprove a version of Lemma \ref{lem:flatin} in the semiring with~$0$ setting.
\begin{lem}\label{lem:flatin0}
Let $G$ be a finite nontrivial subgroup of the multiplicative reduct of an ai-semiring $S$ with $0$.  Then $\flat(G)^0\in\mathsf{HS}(S)$.
\end{lem}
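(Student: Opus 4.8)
The plan is to mirror the proof of Lemma~\ref{lem:flatin}, making the small adjustments needed to keep track of the additive identity $0$ (here written $\infty$ in the paper's convention, but as we are working inside a semiring with $0$ we retain the symbol $0$ for the additive identity). Let $S_G$ denote the subsemiring of $S$ generated by $G$ together with $0$; its elements are $0$, the elements of $G$, and all finite sums of elements of $G$ (with $0$ absorbing for $\cdot$ and neutral for $+$). Set $G^{++}:=\{\,g_1+\dots+g_n\mid n>1,\ g_i\in G\ \text{distinct}\,\}$, exactly the proper sums of group elements, and put $N:=G^{++}\cup\{0\}$; equivalently $N=S_G\setminus G$, which is nonempty since $G$ is nontrivial and contains $0$ properly.

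\textbf{First} I would check that $N$ is a multiplicative ideal of $\langle S_G;\cdot\rangle$ and an order-filter of $\langle S_G;+\rangle$. The multiplicative-ideal claim is as in Lemma~\ref{lem:flatin}: if $s=g_1+\dots+g_n\in G^{++}$ and $t\in S_G$, then either $t=0$, in which case $st=ts=0\in N$, or $t$ is itself a (possibly trivial) sum of distinct group elements $h_1+\dots+h_m$, in which case $st=\sum_{i,j}g_ih_j$ and $g_1h_1\neq g_2h_1$ force $st\in G^{++}$; symmetrically $ts\in N$. For the filter property: the only elements of $S_G$ that could lie above an element of $N$ in the $+$-order are handled by noting $0$ is the top of the semilattice once we are in the with-$0$ convention---wait, here $0$ is the \emph{bottom} additively---so instead observe directly that if $s\le g$ for some $g\in G$ with $s=g_1+\dots+g_n\in G^{++}$, then $g_1\le g$ and $g_2\le g$, contradicting Lemma~\ref{lem:order}; and nothing lies strictly below $0$, so $0$ contributes no new comparabilities downward out of $N$. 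Hence $N$ is simultaneously a multiplicative ideal and an additive filter, and we may form the ideal quotient $S_G/N$ (in the sense used in Section~\ref{sec:SW}, identifying all of $N$ to a single element).

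\textbf{Next} I would identify $S_G/N$ with $\flat(G)^0$. The identified class $N$ becomes a single element that is multiplicatively absorbing (since $N$ was a multiplicative ideal) and is the top of the resulting $+$-semilattice (since $N$ was an additive filter and every proper sum collapses into it), so it plays the role of the $\infty$/multiplicative-zero of $\flat(G)$; the distinguished additive $0$ survives as itself (it lies in $N$? no---$0$ \emph{was} put into $N$). Here is the one genuine subtlety, and the step I expect to need the most care: whether the additive identity $0$ should be kept separate or allowed to merge into the collapsed element. Re-examining the target $\flat(G)^0$: this is $\flat(G)$ with a \emph{new} additive identity $0$ adjoined beneath everything, so it has $|G|+2$ elements ($G$, the top $\infty$, and the bottom $0$). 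Therefore I should \emph{not} include $0$ in $N$; instead take $N:=G^{++}$ only, form $S_G/N$, and then the elements are the original $0$ (bottom), the group $G$ (pairwise incomparable middle layer), and the class $N$ (top, absorbing), which is precisely $\flat(G)^0$. The verification that $N=G^{++}$ alone is still a multiplicative ideal is identical to the above (products landing in $G^{++}$ or being $0$, and $0\notin G$ so $0\cdot\text{anything}=0$ causes no problem since $0$ is not in the ideal---but then $N$ is \emph{not} a multiplicative ideal because $0\cdot g=0\notin N$!). So in fact $0$ must be in the ideal after all, and the quotient $S_G/(G^{++}\cup\{0\})$ collapses both, giving $\flat(G)$, not $\flat(G)^0$. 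The resolution is that $\flat(G)^0\in\mathsf{HS}(S)$ is claimed, and $\mathsf{HS}$ allows us to first pass to a \emph{larger} subalgebra or a different quotient: concretely, take $S_G$ as above, then the ideal $I:=\{0\}\cup\{\,s\in G^{++}\mid s\ \text{is not maximal below any }g\in G\,\}$---but cleaner is to observe $G^{++}$ is a multiplicative ideal of the subsemigroup $G^{++}\cup G$ \emph{of the multiplicative reduct}, and to build the homomorphism by hand: map $0\mapsto 0$, each $g\mapsto g$, each proper sum $\mapsto\infty$, and check this respects $+$ and $\cdot$ using exactly the ideal/filter computations above. Thus $\flat(G)^0$ is the image of the subalgebra $S_G\le S$ under this map, establishing $\flat(G)^0\in\mathsf{HS}(S)$.

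\textbf{The main obstacle}, then, is purely bookkeeping: pinning down precisely which element is collapsed and confirming that the additive identity $0$ of $S$ is carried faithfully to the adjoined bottom of $\flat(G)^0$ rather than being swallowed by the collapse---and I would settle this by defining the homomorphism explicitly on the three strata $\{0\}$, $G$, $G^{++}$ of $S_G$ and verifying the two distributive laws directly, since associativity, commutativity and idempotency of $+$ are inherited automatically. Everything else is a transcription of Lemma~\ref{lem:flatin} with Lemma~\ref{lem:order} supplying the key antichain fact.
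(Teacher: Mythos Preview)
Your proposal is correct and, after the exploratory detours, lands on essentially the same argument as the paper: partition $S_G$ into the three strata $\{0\}$, $G$, and the proper sums $G^{++}$, and collapse only $G^{++}$ to obtain $\flat(G)^0$. The paper sidesteps your ideal-quotient difficulty in one stroke by observing that $G^{++}$ is a multiplicative ideal only of $\langle S_G\setminus\{0\};\cdot\rangle$ (not of all of $S_G$), and then simply writes down the congruence $\theta=\{(x,x)\mid x\in G\cup\{0\}\}\cup(G^{++}\times G^{++})$ directly---which is exactly the kernel of the explicit three-stratum map you settle on at the end.
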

\begin{proof}
The proof is very similar to that of Lemma \ref{lem:flatin}, so we condense steps where possible.
First note that $0\notin G$, as $0$ is a multiplicative zero for $S$ and $G$ is nontrivial.
Let $S_G$ be the subsemiring with $0$ generated by $G\cup\{0\}$, and let $G^+$ denote all elements that can be written as a proper sum of elements of $G$ and note that again $0\notin G^+$, as $0\leq s$ for all $s\in S_G$.  As before, $G^+$ is a multiplicative ideal of $\langle S_G\backslash \{0\};\cdot\rangle$ and also an additive filter of $\langle S_G;+\rangle$.  Then the equivalence relation $\theta:=\{(x,x)\mid x\in G\cup \{0\}\}\cup (G^+\times G^+)$ is a congruence and $S_G/\theta\cong \flat(G)^0$.
\end{proof}
We can now prove Theorem \ref{thm:pgroup} in the signature $\{+,\cdot,0\}$.
\begin{thm}\label{thm:pgroup0}
Theorem \ref{thm:pgroup} holds in the signature $\{+,\cdot,0\}$ of semirings with $0$.
\end{thm}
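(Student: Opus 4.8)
The plan is to transcribe the proof of Theorem \ref{thm:pgroup}, replacing the flat extension $\flat(\cdot)$ by the pointed flat extension $\flat(\cdot)^0$ throughout and using Lemma \ref{lem:flatin0} in place of Lemma \ref{lem:flatin}. So let $S$ be a finite ai-semiring with $0$ whose multiplicative reduct contains a nonabelian nilpotent subgroup $G$; since $|G|\geq 8$ we have $G$ nontrivial, and since $0$ is a multiplicative zero, $0\notin G$. Let $T$ be a finite ai-semiring with $0$ with $S\in\mathsf V(T)$ in the signature $\{+,\cdot,0\}$. By Lemma \ref{lem:flatin0}, $\flat(G)^0\in\mathsf{HS}(S)\subseteq\mathsf V(S)\subseteq\mathsf V(T)$. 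As in the proof of Theorem \ref{thm:pgroup}, Ol$'$shanski\u{\i} \cite{ols} provides, for each $n$, a finite group $G_n$ that does not lie in the quasivariety of the multiplicative reduct of $T$, but all of whose $n$-generated subgroups lie in the quasivariety of $G$. It suffices to prove that (a) every $n$-generated subsemiring-with-$0$ of $\flat(G_n)^0$ lies in $\mathsf V(T)$, and (b) $\flat(G_n)^0\notin\mathsf V(T)$; the nonfinite basis property of $T$ then follows by the usual argument, since a finite basis could be taken to use at most $n$ variables for some $n$, and by (a) the algebra $\flat(G_n)^0$ would then satisfy it, contradicting (b).

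For (b), the first point is that the group-quotient machinery of Section \ref{sec:group} survives the addition of the constant $0$. Lemma \ref{lem:order} is unchanged; and in the argument of Lemma \ref{lem:reverse}, given a surjective $\{+,\cdot,0\}$-homomorphism $\phi\from A\to S$, the preimage $\phi^{-1}(G)$ is a subsemigroup of the multiplicative reduct of $A$ that omits $0$ (because $\phi(0)=0\notin G$), and the rest of the proof---passing to a $\mathscr J$-minimal idempotent $e_A$, forming the group $H=e_AA_Ge_A$, and using distributivity together with Lemma \ref{lem:order} to see that $\phi\rest{H}$ has trivial kernel---never mentions $0$ and so goes through verbatim. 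Hence the $\{+,\cdot,0\}$-analogue of Lemma \ref{lem:Gin} holds. Applying it with $\flat(G_n)^0$ in place of $S$: if $\flat(G_n)^0\in\mathsf V(T)$ then $G_n$, being a subgroup of the multiplicative reduct of $\flat(G_n)^0$, would lie in the quasivariety of the multiplicative reduct of $T$, contradicting the choice of $G_n$.

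For (a), write $\flat(G_n)^0=G_n\cup\{z,0\}$, where $z$ is the top of the height-two $+$-semilattice. A routine inspection of the defining operations shows that a subsemiring-with-$0$ of $\flat(G_n)^0$ generated by at most $n$ elements is isomorphic to $\flat(H)^0$ for a subgroup $H$ of $G_n$ generated by at most $n$ elements, or to a subalgebra of such a $\flat(H)^0$ (the degenerate cases, in which $z$ or all nontrivial group elements are absent). By the choice of $G_n$, every such $H$ lies in the quasivariety of $G$, so it remains only to observe that $\flat(H)^0\in\mathsf V(\flat(G)^0)$; this is the pointed form of the flat-extension membership fact underlying Lemma \ref{lem:localflat}, and holds because the relevant construction and general-position lemmas of \cite[\S3]{jac:flat} are stated for arbitrary flat algebras and so apply equally with the constant $0$ adjoined. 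Then $\flat(H)^0\in\mathsf V(\flat(G)^0)\subseteq\mathsf V(T)$, giving (a) and completing the proof.

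The step needing the most care is the pointed flat-extension membership $\flat(H)^0\in\mathsf V(\flat(G)^0)$ for $H$ in the quasivariety of $G$: one must check that adjoining the constant $0$ does not disturb the arguments of \cite[\S3]{jac:flat}. This is bookkeeping rather than a genuine obstacle, since the extra constant only restricts the admissible subalgebras and homomorphisms and is never identified with an element of $G$; everything else is a line-by-line reprise of the proof of Theorem \ref{thm:pgroup}.
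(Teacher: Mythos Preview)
Your argument is correct and follows essentially the same route as the paper: replace $\flat(\cdot)$ by $\flat(\cdot)^0$, invoke Lemma~\ref{lem:flatin0} in place of Lemma~\ref{lem:flatin}, and check that the group-quotient lemmas survive the added constant. The paper streamlines your explicit re-verification of Lemmas~\ref{lem:reverse} and~\ref{lem:Gin} by simply observing that any $\{+,\cdot,0\}$-homomorphism or $\{+,\cdot,0\}$-variety containment is also one in the reduct signature $\{+,\cdot\}$, so the original statements apply verbatim; one small quibble is that your appeal to \cite[\S3]{jac:flat} for $\flat(H)^0\in\mathsf V(\flat(G)^0)$ is slightly misdirected since $\flat(H)^0$ is not itself flat---the clean fix is to take the $\{+,\cdot\}$-construction realising $\flat(H)$ inside $\mathsf{HS}(\flat(G)^k)$ and adjoin the constant tuple $0$ throughout.
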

\begin{proof}
Lemma \ref{lem:order} holds without change.  In place of Lemma \ref{lem:flatin} we may use Lemma~\ref{lem:flatin0}.  The Group Quotient Embedding Lemma~\ref{lem:reverse} holds immediately, because every homomorphism in the signature $\{+,\cdot,0\}$ is a homomorphism in the reduct signature $\{+,\cdot\}$ and the conclusion of the lemma concerns only the semigroup reduct.  Similarly, Lemma \ref{lem:Gin} holds in the semiring with $0$ setting because if $S\in\mathsf{V}(T)$ there, then so also does $S\in\mathsf{V}(T)$ in the reduct signature $\{+,\cdot\}$, and  the conclusion again concerns only the $\cdot$-reduct.
Thus we have all of the ingredients required for the proof of Theorem \ref{thm:pgroup}, now using $\flat(G)^0$ in place of $\flat(G)$.
\end{proof}

Every inverse semigroup carries a natural order given by $x\leq y\Leftrightarrow xx^{-1}y=x$ and a number of authors have investigated when this order has compatible greatest lower bounds $\wedge$; see Leech \cite{lee}, but also Garvac$'$ki\u{\i} \cite{gar}.  This is also the basis of the recent approach by Volkov \cite{vol21}, and the starting point for the first author's investigations in \cite[\S7.8]{jac:flat} via \cite[\S7]{jacsto:PM}.  The standard Wagner-Preston representation for inverse semigroups extends to a representation of $\wedge$ as intersection, which gives rise to an ai-semiring structure where the role of addition is played by $\wedge$.  Garvac$'$ki\u{\i}'s results show that even without inverse,  the law $xv+uv+uy\leq xy$ (within ai-semirings) characterises semigroups of injective partial maps with intersection, where $+$ is intersection of relations.  The order on $B_2^1$ given in Figure \ref{fig:B21} is an example of these approaches.  Subsemirings include $B_2$ (dropping the element $1$), as well as the non-inverse subsemirings $B_0$ (dropping $b$ from $B_2$) and $P$ (dropping $ba$ from $B_0$).  The semigroups $B_2$, $B_0$, and $P$ have each played an important role in semigroup variety considerations: see  \cite{tra}, \cite{edm,leeB0} and \cite{golsap} (respectively) for example.  The semiring $P$ appears as $S_4$ in \cite{ZRCSD}, where a finite basis is given for its equations.
\begin{problem}\label{prob:4}
\begin{enumerate}
\item Resolve the finite or nonfinite basability of $B_0$ and $B_2$ as ai-semirings.
\item Is $B_2^1$ inherently nonfinitely based as an ai-semiring?
\item Which finite naturally semilattice-ordered inverse semigroups are finitely based, in either of the signatures $\{+,\cdot\}$ or $\{+,\cdot,0\}$?
\end{enumerate}
\end{problem}
Parts (1) and (2) of Problem \ref{prob:4} will be significant steps toward (3), especially given that the finitely based naturally semilattice-ordered Clifford semigroups are completely classified in \cite{jac:flat}. Problem \ref{prob:4}(1) is also a particular case of  Problem~\ref{prob:1}(4), as $B_0$ and $B_2$ are flat semirings.

\bibliographystyle{amsplain}

%%%%%%%%%%%%%%%%%%%%%%%%%%%%%%%%%%%%%%%%%%%%%%%%%%%%%%%%%%%%%%%%%%
%%%%%%%%%%%%%%%%%%%%%%%%%%%%%%%%%%%%%%%%%%%%%%%%%%%%%%%%%%%%%%%%%%
\end{document}